\newtheorem{teor}{Theorem}[section] \newtheorem{corol}[teor]{Corollary} \newtheorem{prop}[teor]{Proposition} \newtheorem{lem}[teor]{Lemma}
\theoremstyle{definition} \newtheorem{defin}[teor]{Definition}  \newtheorem{ejem}[teor]{Example}
\theoremstyle{remark} \newtheorem{obs}[teor]{Remark}
\newtheorem{rmks}[teor]{Remarks}
\newcommand\st{\operatorname{st}}  \newcommand{\End}{\mbox{\rm End\,}}
\newcommand{\id}{\mathrm{id}}
\newcommand{\ind}{\text{Ind}}
\newcommand{\unit}{\text{\textbf{1}}}
\newcommand\Vect{\operatorname{Vec}}
 \newcommand{\Hom}{\text{Hom}} \newcommand{\C}{\mathcal{C}}
  \newcommand{\F}{\mathcal{F}}
\newcommand{\D}{\mathcal{D}}\newcommand{\A}{\mathcal{A}} \newcommand{\M}{\mathcal{M}} \newcommand{\Aut}{{\rm Aut}}
\newcommand{\N}{\mathcal{N}}
\begin{document}

\title[Clifford theory for graded fusion categories]{Clifford theory for graded fusion categories}
\author{C\'esar Galindo}
\address{Departamento de Matem\'aticas\\ Universidad de los Andes\smallbreak Carrera 1 N. 18A-10,  Bogot\'a, Colombia}
\email{cesarneyit@gmail.com, cn.galindo1116@uniandes.edu.co}
\subjclass{16W30}
\date{}

\begin{abstract}
We develop a categorical analogue of Clifford theory for strongly
graded rings over graded fusion categories. We describe module
categories over a fusion category graded by a group $G$ as induced
from module categories over fusion subcategories associated with the
subgroups of $G$. We define invariant $\C_e$-module categories and
extensions of $\C_e$-module categories. The construction of module
categories over $\C$ is reduced to determining invariant module
categories for subgroups  of $G$ and the indecomposable extensions
of this modules categories. We associate a $G$-crossed product
fusion category to each $G$-invariant $\C_e$-module category and
give a criterion for a graded fusion category to be a
group-theoretical fusion category. We give necessary and sufficient
conditions for an indecomposable module category to be extendable.
\end{abstract}

\maketitle
\section{Introduction and main results}

{\bf 1.} Fusion categories arise in several areas of mathematics and physics, \textit{e.g.}, conformal field theory, operator algebras, representation theory of quantum groups, topological quantum computation, topological quantum field theory, low dimensional topology, and others.
\smallbreak
It is an important and interesting question to classify indecomposable module categories over a given fusion category. The existence of certain module categories yields valuable information about the fusion category $\C$. For example, $\C$ admits a module category of rank one if and only if $\C$ is the category of representations of a semisimple Hopf algebra.

\smallbreak Let $\C$ be a fusion category and let $G$ be a finite
group. We say that $\C$ is graded by $G$ or $G$-graded if $\C
=\oplus_{\sigma\in G} \C_\sigma$, and for any $\sigma, \tau \in G$,
one has $\otimes: \C_\sigma \times \C_\tau \to \C_{\sigma\tau}$.
Graded fusion categories are very important in the study and
classification of fusion categories, see \cite{DGNO,ENO,ENO2,ENO3}.
\smallbreak {\bf 2.} In \cite{G} the author gives the first steps
toward the understanding of  module categories over graded tensor
categories, developing a Clifford theory for a special kind of
graded tensor categories. \smallbreak The main goal of this paper is
to generalize the main results of \cite{G} to arbitrary graded
fusion categories and the description using group-theoretical data
of the indecomposable module categories of a graded fusion category.
\smallbreak Let $\C$ be a  $G$-graded fusion category.  Given a
$\C$-module category  $\M$, we shall denote by $\Omega_{\C_e}(\M)$
the set of equivalence classes of indecomposable $\C_e$-submodule
categories of $\M.$ By Corollary \ref{G-conjuto}, the group   $G$
acts on $\Omega_{\C_e}(\M)$ by $$G\times
\Omega_{\C_e}(\M)\to\Omega_{\C_e}(\M), \  \  (\sigma,[X])\mapsto
[\C_\sigma\boxtimes_{\C_e} X].$$ \smallbreak {\bf 3.} Our first main
result is the Clifford theorem for module categories over fusion
categories:

\begin{teor}\label{theorem clifford}
Let $\C$ be a  $G$-graded fusion category and let $\M$ be an indecomposable $\C$-module category. Then:

\begin{enumerate}
  \item The action of $G$ on $\Omega_{\C_e}(\M)$ is transitive,
  \item Let $\N$ be an indecomposable $\C_e$-submodule subcategory of $\M$. Let $H=st([\N])$ be the stabilizer subgroup of $[\N] \in
 \Omega_{\C_e} (\M)$, and let also \[ \M_\N=\sum_{h\in H}\C_H\overline{\otimes}\N.\] Then $\M_\N$ is an indecomposable $\C_H$-module category and $\M$ is equivalent to $\ind_{\C_H}^\C(\M_\N)$ as $\C$-module categories.
\end{enumerate}
\end{teor}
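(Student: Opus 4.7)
The strategy follows classical Clifford theory for finite groups, treating indecomposable $\C_e$-module subcategories of $\M$ as the categorical analogue of isotypic components in a restricted irreducible representation, and the $G$-action on $\Omega_{\C_e}(\M)$ as the analogue of the conjugation action on irreducible constituents of the restriction.

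\emph{Part (1).} Since every module category over a fusion category is semisimple, I would begin by writing $\M = \bigoplus_{i \in I} \M_i$ as an internal direct sum of indecomposable $\C_e$-module subcategories. For $\sigma \in G$, the functor $\C_\sigma \boxtimes_{\C_e} -$ is an autoequivalence of the 2-category of $\C_e$-module categories, so it carries each $\M_i$ to an indecomposable $\C_e$-submodule of $\M$; this yields the promised $G$-action on $\Omega_{\C_e}(\M)$. For any $G$-orbit $O \subseteq \Omega_{\C_e}(\M)$, the subcategory $\bigoplus_{[\M_i] \in O} \M_i$ is closed under the action of every $\C_\sigma$ and is therefore a nonzero $\C$-submodule of $\M$; the $\C$-indecomposability of $\M$ then forces $O$ to exhaust $\Omega_{\C_e}(\M)$.

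\emph{Part (2).} The subcategory $\M_\N$ is closed under the action of $\C_H = \bigoplus_{h \in H} \C_h$ because $\C_{h'} \overline{\otimes}(\C_h \overline{\otimes} \N) \subseteq \C_{h'h} \overline{\otimes} \N \subseteq \M_\N$ for all $h, h' \in H$, so $\M_\N$ inherits a $\C_H$-module category structure. For $\C_H$-indecomposability, suppose $\M_\N \simeq \M' \oplus \M''$ as $\C_H$-modules; since $\N$ is $\C_e$-indecomposable it lies in a single summand, say $\M'$, and then $\C_h \overline{\otimes} \N \subseteq \C_h \overline{\otimes} \M' \subseteq \M'$ for every $h \in H$, so $\M_\N \subseteq \M'$ and $\M'' = 0$. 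For the equivalence $\M \simeq \ind_{\C_H}^\C(\M_\N)$, the inclusion $\M_\N \hookrightarrow \M$ is a $\C_H$-module functor which corresponds, by the adjunction between induction and restriction, to a $\C$-module functor $\Phi: \ind_{\C_H}^\C(\M_\N) \to \M$. The essential image of $\Phi$ contains $\M_\N$ and is $\C$-invariant, so by part (1) it is all of $\M$, making $\Phi$ essentially surjective.

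To upgrade essential surjectivity to an equivalence I would compare Frobenius--Perron dimensions. The standard induction formula gives $\FP(\ind_{\C_H}^\C(\M_\N)) = [G:H]\,\FP(\M_\N)$. Applying part (1) internally to $\M_\N$ shows $\Omega_{\C_e}(\M_\N) = \{[\N]\}$, so $\M_\N$ is $\C_e$-isotypic of type $[\N]$; meanwhile the $\C_e$-isotypic components of $\M$ are permuted transitively by the autoequivalences $\C_\sigma \boxtimes_{\C_e} -$, hence share a common Frobenius--Perron dimension, and part (1) counts $[G:H]$ of them. Identifying $\M_\N$ with the full $[\N]$-isotypic component of $\M$ then yields $\FP(\M) = [G:H]\,\FP(\M_\N) = \FP(\ind_{\C_H}^\C(\M_\N))$, and an essentially surjective $\C$-module functor between semisimple $\C$-module categories of equal Frobenius--Perron dimension must be an equivalence.

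The step I expect to be the main obstacle is the identification of $\M_\N$ with the full $[\N]$-isotypic component of $\M$, because $\M_\N$ is defined as a specific sum of subcategories rather than as an isotypic part; one needs to verify that every $\C_e$-submodule of $\M$ equivalent to $\N$ is already contained in $\sum_{h \in H} \C_h \overline{\otimes} \N$. Without this identification the Frobenius--Perron dimension count cannot be closed, and a fallback would be either a direct proof that $\ind_{\C_H}^\C(\M_\N)$ is $\C$-indecomposable or a Mackey-type decomposition of $\M$ as a $\C_H$-module.
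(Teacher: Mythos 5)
Your part (1) and your direct argument for the $\C_H$-indecomposability of $\M_\N$ are sound and close in spirit to the paper, which realizes the same decomposition via the canonical functor $\mu\colon\C\boxtimes_{\C_e}\N\to\M$ and the equivalences $\C_\sigma\boxtimes_{\C_e}\N\cong\C_\sigma\overline{\otimes}\N$. The genuine gap is in your closing step for $\M\cong\ind_{\C_H}^{\C}(\M_\N)$. The principle you invoke --- that an essentially surjective $\C$-module functor between semisimple module categories of equal Frobenius--Perron dimension must be an equivalence --- is false: take $\C=\Vect$ and the module functor $\Vect\to\Vect$, $V\mapsto V\oplus V$, which is essentially surjective between indecomposable module categories of equal $\FP$-dimension but is not full. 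So even after you identify $\M_\N$ with the full $[\N]$-isotypic component (the other point you flag as open; it does hold, by complete reducibility applied to the decomposition $\M=\bigoplus_{x\in G/\sim}\C_x\overline{\otimes}\N$), essential surjectivity plus a dimension count does not yield full faithfulness of $\Phi$, and your argument does not close.

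The paper avoids this entirely: the transitivity statement makes $\M=\bigoplus_{\sigma H\in G/H}\C_\sigma\overline{\otimes}\M_\N$ into a $G/H$-graded $\C$-module category with identity component $\M_\N$, and Theorem \ref{2-equivalencia con graduados} (the 2-equivalence between $\C_H$-module categories and $G/H$-graded $\C$-module categories) then gives $\M\cong\ind_{\C_H}^{\C}(\M_\N)$ outright. The substance hiding there is the internal Hom computation: for $0\neq M$ in the identity component, $\underline{\Hom}(M,M)$ taken in $\C_H$ coincides with the internal Hom in $\C$, because $\Hom_\M(X\otimes M,M)=0$ for $X\in\C_\sigma$, $\sigma\notin H$; this is what forces the counit functor to be fully faithful and not merely essentially surjective. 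To repair your proof you would need to supply an argument of this kind (or otherwise establish full faithfulness of $\Phi$ directly); the $\FP$-dimension comparison alone cannot do it.
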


Here $\C_H$ is the fusion subcategory $\oplus_{\sigma\in
H}\C_\sigma\subset \C$, and $\overline{\otimes}$ is defined in
Section \ref{section clifford theory}.

Applying Theorem \ref{theorem clifford} to the $\C_H$-module
category $\M_\N$ we prove, in Corollary \ref{corol main result},
that every indecomposable module category over a $G$-graded fusion
category $\C$ is equivalent to $\C\boxtimes_{\C_S}\N$, where $\N$ is
an indecomposable $\C_S$-module category  that remains
indecomposable as a $\C_e$-module category, and $S\subset H$ is a
subgroup. Also, in Proposition \ref{prop equivalent inducidas} we
provide a necessary and sufficient conditions for induced module
categories to be equivalent. \smallbreak

{\bf 4.} Let $\C$ be a fusion category. An indecomposable
$\C$-module category is called \emph{pointed module category} if
every simple object in $\C_\M^*$ is multiplicatively invertible (see
subsection \ref{subsection prelim module cat} for the definition of
$\C_\M^*$). A fusion category is called \emph{group-theoretical} if
it admits a pointed $\C$-module category, see \cite{ENO}.
Group-theoretical categories can be explicitly described in terms of
finite groups and their cohomology, see \cite{O2}. \smallbreak Let
$\C=\bigoplus_{\sigma \in G}\C_\sigma$ be a $G$-graded fusion
category. A $\C_e$-module category $\M$, is called $G$-invariant if
$\C_\sigma\boxtimes_{\C_e}\M$ is equivalent to $\M$ as $\C_e$-module
categories, for all $\sigma\in G$. \smallbreak

Our second main result is a criterion for a graded fusion category
to be group theoretical, this generalizes \cite[Theorem
3.5]{Non-grouptheo}.

\begin{teor}\label{teor group-theo}
A $G$-graded fusion category $\C$ is group-theoretical if and only if  $\C_e$ has a $G$-invariant pointed category.
\end{teor}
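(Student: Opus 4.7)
The plan is to deduce both implications from the $G$-crossed product fusion category construction referred to just before the theorem, which assigns to each $G$-invariant $\C_e$-module category a $G$-graded fusion category that is Morita equivalent to $\C$.

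For $(\Leftarrow)$, starting from a $G$-invariant pointed $\C_e$-module $\N$, the dual $(\C_e)_\N^*$ is a pointed fusion category, and the $G$-invariance provides tensor autoequivalences of $(\C_e)_\N^*$ indexed by $G$. The associated $G$-crossed product fusion category $\widetilde{\C}$ then has trivial component $(\C_e)_\N^*$ and is Morita equivalent to $\C$ by construction. The crucial observation is that $\widetilde{\C}$ itself is pointed: each homogeneous piece $\widetilde{\C}_\sigma$ is an invertible $\widetilde{\C}_e$-bimodule category, and invertible bimodules over a pointed fusion category are themselves pointed (their simples are multiplicatively invertible), so all simples of $\widetilde{\C}$ are invertible. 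Being Morita equivalent to the pointed $\widetilde{\C}$, the category $\C$ is group-theoretical.

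For $(\Rightarrow)$, take an indecomposable pointed $\C$-module $\M$. By Theorem \ref{theorem clifford}, $\M \simeq \ind_{\C_H}^\C(\M_\N)$ for an indecomposable $\C_e$-submodule $\N$ with stabilizer $H\subseteq G$, and the induction yields an equivalence $\C_\M^*\simeq(\C_H)_{\M_\N}^*$ of fusion categories. Since $\C_\M^*$ is pointed, so is $(\C_H)_{\M_\N}^*$, hence $\M_\N$ is a pointed $\C_H$-module, and restricting to $\C_e$ shows that $\N$ is a pointed and $H$-invariant $\C_e$-module category. The remaining task is to upgrade $H$-invariance to $G$-invariance, which I would argue by showing that for a pointed $\C$-module the Clifford stabilizer is forced to be all of $G$; equivalently, all indecomposable $\C_e$-components of $\M$ are equivalent as $\C_e$-modules, so $|\Omega_{\C_e}(\M)|=1$. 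I would attack this via the embedding $\C_\M^*\hookrightarrow(\C_e)_\M^*$: the pointedness of the fusion category $\C_\M^*$, combined with the transitive $G$-action from Theorem \ref{theorem clifford}, should preclude the existence of more than one equivalence class in $\Omega_{\C_e}(\M)$, since otherwise the multi-block structure of $(\C_e)_\M^*$ would force $\C_\M^*$ to have a nontrivial $G/H$-grading with pointed trivial component, and the graded-invertibles coming from cross-block functors would contradict the fusion structure unless the grading collapses.

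The main obstacle I anticipate is precisely this last assertion in the forward direction: ruling out Clifford stabilizers strictly smaller than $G$ for pointed modules. I expect to handle it by a careful comparison of the unit structures of $\C_\M^*$ (simple in the fusion category) and $(\C_e)_\M^*$ (a direct sum $\oplus_i \id_{\N_i}$ in the multi-fusion category), together with the classification of invertible bimodules over pointed fusion categories used already in direction $(\Leftarrow)$.
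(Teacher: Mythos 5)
Your architecture is the same as the paper's (crossed product for sufficiency, Clifford decomposition plus a stabilizer argument for necessity), but both directions have problems as written. In $(\Leftarrow)$, the lemma you lean on --- ``invertible bimodules over a pointed fusion category are themselves pointed'' --- is false. A Tambara--Yamagami category $\C=\C_0\oplus\C_1$ is a counterexample: by Corollary \ref{mapa picard}, $\C_1$ is an invertible $\C_0$-bimodule category over the pointed category $\C_0=\text{Vec}_A$, yet its unique simple object $m$ satisfies $m\otimes m\cong\oplus_{a\in A}\,a$ and is not invertible. What actually forces the dual category to be pointed is the \emph{crossed product} structure: each homogeneous component of $\C^*_{\C\boxtimes_{\C_e}\N}$ contains a multiplicatively invertible object $U_\sigma$, hence equals $U_\sigma\otimes(\C_e)^*_\N$ and is pointed because the trivial component is. Your stated justification would equally ``prove'' that every graded fusion category with pointed trivial component is pointed, which Tambara--Yamagami categories refute; so this step needs to be replaced by the invertible-object argument.

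In $(\Rightarrow)$ there is a genuine gap. First, the claimed equivalence $\C^*_\M\simeq(\C_H)^*_{\M_\N}$ cannot hold when $H\neq G$: for an indecomposable module category the dual has the same Frobenius--Perron dimension as the category, so $\FP(\C^*_\M)=\FP(\C)=[G:H]\,\FP\bigl((\C_H)^*_{\M_\N}\bigr)$. The correct (and sufficient) statement is that $(\C_H)^*_{\M_\N}\cong{}_A(\C_H)_A$ is a full fusion subcategory of $\C^*_\M\cong({}_A\C_A)^{\mathrm{rev}}$, hence pointed. More seriously, the crux of the whole direction --- that the stabilizer of $[\N]$ is all of $G$, i.e.\ that all indecomposable $\C_e$-constituents of a pointed $\C$-module category are equivalent --- is exactly the step you defer with ``I would argue\dots'', ``should preclude\dots'', ``I expect to handle it\dots''. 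This is not an argument: the informal claim that cross-block graded invertibles ``contradict the fusion structure unless the grading collapses'' is precisely what has to be proved. The paper imports this from \cite[Lemma 2.2]{Non-grouptheo}, a Frobenius--Perron dimension count comparing $\FP(\C^*_\M)=\FP(\C)$ with the dimensions of the diagonal blocks of the multifusion category $\F_{\C_e}(\M,\M)$, and the same count is what shows that $\N$ itself is a pointed $\C_e$-module category (``restricting to $\C_e$'' does not give this for free, since pointedness of $\C^*_\M$ only controls a subcategory of $\F_{\C_e}(\M,\M)$). Until that counting argument (or a substitute) is supplied, the forward implication is unproved.
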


Consequently, if a Tambara-Yamagami category (see\cite{TY}), or one
of their generalizations in \cite{2Gen-TY} and \cite{Gen-TY}, is the
category of representations of a Hopf algebra, then it is a
group-theoretical fusion category (this result is well-known for TY
categories, see \cite[Remark 8.48]{ENO}). \smallbreak {\bf 5.}
Corollary \ref{corol main result} reduces the construction of
indecomposable $\C$-module categories over a graded fusion category
$\C=\bigoplus_{\sigma\in G}\C_\sigma$, to the construction  of
$\C_H$-module categories $\M$ such that the restriction to $\C_e$
remains indecomposable, for some subgroup $H\subset G$. \smallbreak
If $(\M,\otimes)$ is a $\C_e$-module category, then an extension of
$\M$ is a $\C$-module category $(\M,\odot)$ such that $(\M,\otimes)$
is obtained by restriction to $\C_e$.

\smallbreak Our third main result provides a necessary and
sufficient condition for an indecomposable $\C_e$-module category to
have an extension.

\smallbreak Let $\M$ be an indecomposable $\C_e$-module category and
$\overline{\M}=\ind_{\C_e}^\C(\M)$. Then, by the results of Section
\ref{Section invariant y G graduado}, the fusion category
$\C^*_{\overline{\M}}$ has a natural $G^{op}$-grading.

\begin{teor}\label{teor extensiones}
Let $\C$ be a $G$-graded fusion category. Then an indecompo\-sable
left $\C_e$-module category $\M$ has an extension $(\M,\odot)$ if
and only if $\C^*_{\overline{\M}}$ is a semi-direct product fusion
category. There is a one-to-one correspondence between equivalence
classes of $\C$-extensions of $\M$ and conjugacy classes of graded
tensor functors  Vec$_G\to \C^*_{\overline{\M}}$.

\end{teor}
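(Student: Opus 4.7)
The plan is to match the data of a $\C$-module extension of $\M$ with the data of a graded tensor functor $\Vect_G \to \C^*_{\overline{\M}}$, using Frobenius reciprocity to unravel both sides. First, via the decomposition $\overline{\M}|_{\C_e} = \bigoplus_{\sigma \in G} \C_\sigma \boxtimes_{\C_e} \M$ and the induction/restriction adjunction, I would identify the $\sigma$-component $\D_\sigma$ of the $G^{op}$-grading $\C^*_{\overline{\M}} = \bigoplus_\sigma \D_\sigma$ with the category $\Hom_{\C_e}(\M, \C_\sigma \boxtimes_{\C_e} \M)$ of $\C_e$-module functors. The tensor product in $\C^*_{\overline{\M}}$ then translates into the canonical composition $\C_\sigma \boxtimes_{\C_e} \C_\tau \boxtimes_{\C_e} \M \to \C_{\sigma\tau} \boxtimes_{\C_e} \M$ induced by $\otimes\colon \C_\sigma \times \C_\tau \to \C_{\sigma\tau}$, and invertible objects of $\D_\sigma$ correspond precisely to $\C_e$-module equivalences $\M \simeq \C_\sigma \boxtimes_{\C_e} \M$, which exist only when $\sigma$ stabilizes the class $[\M]$.

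Next, I would unpack what it means to extend the $\C_e$-module structure on $\M$ to a $\C$-module structure: this amounts to specifying a family of $\C_e$-module functors $F_\sigma \colon \C_\sigma \boxtimes_{\C_e} \M \to \M$ together with associator isomorphisms $F_{\sigma\tau} \circ (\otimes \boxtimes \id) \simeq F_\sigma \circ (\id \boxtimes F_\tau)$ satisfying the module pentagon. Because $\M$ is indecomposable and the rigidity of $\C$ forbids $X \cdot m = 0$ for $0 \neq X \in \C_\sigma$, each $F_\sigma$ is necessarily an equivalence, and in particular $\M$ must be $G$-invariant. Taking quasi-inverses $V_\sigma = F_\sigma^{-1}$ produces invertible objects of $\D_\sigma$, and the module associators translate bijectively into multiplicativity isomorphisms $V_\sigma \otimes V_\tau \simeq V_{\sigma\tau}$ satisfying the tensor functor pentagon; that is, an extension of $\M$ is exactly a graded tensor functor $\Vect_G \to \C^*_{\overline{\M}}$ splitting the grading. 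Conversely, such a tensor functor reconstructs an extension by the same procedure. Since the existence of such a splitting is by definition equivalent to $\C^*_{\overline{\M}}$ being a semi-direct product, the biconditional follows. For the classification, two extensions are equivalent exactly via a $\C$-module autoequivalence of $\M$ inducing the identity on the underlying $\C_e$-module, namely an invertible object of $\D_e$; this corresponds precisely to conjugation of the associated tensor functors by invertible objects of $\D_e$, so equivalence classes of extensions correspond to conjugacy classes of graded tensor functors.

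The main obstacle is verifying that the module pentagon and the tensor functor pentagon match under the identification in the first step, which requires careful tracking of the adjunction isomorphisms, the composition convention in $\C^*_{\overline{\M}}$, and the resulting reversal from a $G$-grading to a $G^{op}$-grading. Once this bookkeeping is performed and the isomorphism $\Hom_\C(\overline{\M}, \overline{\M}) \simeq \bigoplus_\sigma \Hom_{\C_e}(\M, \C_\sigma \boxtimes_{\C_e} \M)$ is shown to be compatible with all relevant tensor and associator data, the equivalence of categories of extensions and of splitting graded tensor functors is essentially formal.
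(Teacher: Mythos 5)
Your proposal is correct in substance, but it takes a genuinely different route from the paper. The paper first settles the special case $\M=(\C_e,\otimes)$ in Proposition \ref{lema extension facil}: there an extension $\odot$ is analyzed concretely by producing objects $\unit_\sigma\in\C_\sigma$ with $\unit_\sigma\odot\unit=\unit$, showing $V_\sigma\odot X_e\cong V_\sigma\otimes X_e\otimes\unit_{\sigma^{-1}}$, and reading off a graded tensor functor $\text{Vec}_G\to\C$ together with the conjugacy classification; the general case is then obtained by Morita transport, writing $\M=(\C_e)_A$, replacing $\C$ by the $G$-graded dual ${}_A\C_A$ (which is $\C^*_{\overline{\M}}$ with reversed product) and using the algebra $B={}^*A\otimes A$ to carry extensions back and forth. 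You instead work directly on $\C^*_{\overline{\M}}$, identifying $\D_\sigma\cong\F_{\C_e}(\M,\C_\sigma\boxtimes_{\C_e}\M)$ (the content of Theorem \ref{2-equivalencia con graduados} and Proposition \ref{prop  invariant sii product cruzado}) and matching the module pentagon with the tensor-functor coherence; this is more intrinsic and makes the role of $G$-invariance transparent, at the price of the coherence bookkeeping you flag, which the paper's route confines to the regular case where it is an explicit finite computation. Two points to tighten: first, ``rigidity forbids $X\odot M=0$'' plus indecomposability does not by itself make $F_\sigma$ an equivalence (a nonzero $\C_e$-module endofunctor of an indecomposable module category need not be invertible); you should instead observe that $F_{\sigma^{-1}}\circ(\id\boxtimes F_\sigma)$ and $F_\sigma\circ(\id\boxtimes F_{\sigma^{-1}})$ are, via the associators and Corollary \ref{mapa picard}, isomorphic to the equivalence $F_e=r_\M$, which does force each $F_\sigma$ to be an equivalence. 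Second, the identification of semi-direct products with graded splittings $\text{Vec}_G\to\C^*_{\overline{\M}}$ is Lemma \ref{lema semidir sii funtor graduado}, not a definition, and equivalences of extensions are arbitrary $\C$-module equivalences whose underlying $\C_e$-module autoequivalence is precisely the invertible object of $\D_e$ realizing the conjugation, rather than functors inducing the identity on $\M$.
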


Consequently, (see Corollary \ref{corol equiv}), if $\M$ is an
extension of an indecomposable $\C_e$-module category, $\C_\M^*$ is
a $G^{op}$-equivariantization of $\C_e$. Finally, in Proposition
\ref{propo decripcion extension en group-theo data} we describe
extensions using group-theoretical data.

\smallbreak {\bf 6} At the same time this paper was completed, Meir
and Musicantov posted the paper \cite{MM} containing results similar
to some of ours. In this paper module categories over $\C$ are
classified in terms of module categories over $\C_e$ and the
extension data $(c, M, \alpha)$ of $\C$ defined in \cite{ENO3}.

\smallbreak {\bf 7} The organization of the paper is as follows: In
Section 2 we discuss preliminaries. In Section 3 we study module
categories graded over a $G$-set and provide a structure theorem for
them. In Section 4   we prove Theorem \ref{theorem clifford}. In
Section 5 we study $G$-graded module categories and invariant module
categories, and we prove Theorem \ref{teor group-theo}. In Section 6
we prove Theorem \ref{teor extensiones}.

\bigbreak \textbf{Acknowledgements} This work was partially
supported by the Project 003568 of the Department of Mathematics,
Pontificia Universidad Javeriana. The author thanks M. Mombelli, J.
Ochoa and E. Rowell for useful discussions and advise.
\section{Preliminaries}

Throughout the paper we work over an algebraically closed field $k$ of characteristic
0.  All categories considered in this paper are finite, abelian, semisimple,
and $k$-linear. All functors and bifunctors considered in this paper are additive and $k$-linear.
\subsection{Fusion categories}

By a fusion category we mean a $k$-linear semisimple rigid tensor category $\C$ with finitely many isomorphism classes of simple objects, finite dimensional spaces of morphisms and such that the unit object of $\C$ is simple. We refer the reader to \cite{ENO} for a general theory of fusion categories.

\begin{ejem}[Examples of fusion categories]
{\bf 1.} The category Vec$_G$ of finite dimensional vector spaces
graded by a finite group $G$. Simple objects in this category are
$\{k_\sigma\}_{\sigma\in G}$, the vector spaces of dimension one
graded by  $\sigma\in  G$, and the tensor product is given by
$k_\sigma \otimes k_\tau = k_{\sigma\tau}$, with the associativity
morphism being the identity.

More generally, choose $\omega\in  Z^3(G, k^*)$ a normalized 3-cocycle. To this 3-cocycle we can attach a
twisted version Vec$_G^\omega$ of Vec$_G$: the simple objects and the tensor product functor
are the same, but the associativity isomorphism is given by $\alpha_{V_\sigma,V_\tau,V_\rho} = \omega(\sigma, \tau, \rho)\id.$
The pentagon axiom then follows from the cocycle condition
\[\omega(\tau, \rho, \nu)\omega(\sigma, \tau \rho, \nu)\omega(\sigma, \tau, \rho) = \omega(\sigma\tau, \rho, \nu)\omega(\sigma, \tau, \rho \nu).\]
Note that cohomologous cocycles define equivalent fusion categories.

{\bf 2.} The category Rep$(H)$ of finite dimensional representations
of a finite dimensional semisimple quasi-Hopf algebra $H$.

{\bf 3.} The category of integrable modules (from category $\mathcal{O}$) over the affine algebra
$\widehat{sl}_2$ at level $l$ (see \cite{BaKi}).
\end{ejem}

By a fusion subcategory of a fusion category $\C$ we understand a full tensor subcategory of $\C$. For any fusion category $\C$, the unit object $\unit$ generates a trivial fusion subcategory equivalent to Vec, the fusion category of finite dimensional vector spaces over $k$.
\subsection{Multiplicatively invertible objects and pointed fusion categories}\label{pointed y obstruccion}

An object $X$ in $\C$ is said to be \textit{multiplicatively
invertible} if $X$ is rigid with a dual object $X^*$  such that
$X\otimes X^* = \unit$. An invertible object is necessarily simple
and the set of isomorphism classes of invertible objects forms a
group: the multiplication is given by tensor products and the
inverse operation by taking dual objects, we shall denote this group
by $U(\C)$.

A fusion category is called \textit{pointed} if all its simple
objects are multiplicatively invertible. Examples of pointed fusion
category are  the fusion categories $\Vect_G^{\omega}$.

Let $\C$ be a fusion category. In order to see whether a pointed
fusion subcategory $\D\subset \C$ is  tensor equivalent to Vec$_G$,
with $U(\D)=G$, we choose a set $\{X_\sigma\}_{\sigma\in G}$ of
representative objects and a family of isomorphisms
$\{t_{\sigma,\tau} : X_\sigma\otimes X_\tau\to
X_{\sigma\tau}\}_{\sigma,\tau \in G}$. Recall, for all simple object
$X\in \C$, $\End_{\C}(X)=k$, thus there exists a unique function
$\omega: G\times G\times G\to k^*$, such that
\[t_{X_{\sigma\tau},X_\rho}\circ(t_{X_\sigma,X_\tau}\otimes\id_{X_\rho})
= \omega(g,h,k)t_{X_\sigma,X_{\tau\rho}}\circ (\id_{X_\sigma}\otimes
t_{X_\tau,X_\rho})\] for all $\sigma, \tau, \rho \in G$. The
function $\omega$ is a 3-cocycle, and the ambiguity of the choice of
$t_{\sigma,\tau}$ gives rise to a coboundary of $G$, the cohomology
class $\omega(\D)\in  H^3(G;k^*)$ is well defined, which is referred
to as the  obstruction of $\D$.

The following proposition is well known and follows from the
previous discussion, we include it for reader's convenience.
\begin{prop}\label{prop obstruccion punteadas}
Let $\D$ be a pointed fusion category. Then $\D$ is tensor
equivalent to Vec$_G^\omega$, where $G=U(\D)$ and $\omega$ is a
3-cocycle in the class of $\omega(\D)$. The pointed fusion category
$\D$ is tensor equivalent to Vec$_G$ if and only if $\omega(\D)=0$.
\end{prop}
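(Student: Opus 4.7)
The plan is to turn the discussion preceding the proposition into an explicit tensor equivalence $F\colon \text{Vec}_G^{\omega}\to \D$ and then to read off the second statement from well-known properties of $H^3(G;k^*)$.

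First I would define $F$ on objects by $F(k_\sigma)=X_\sigma$ and extend additively. Since every simple of $\D$ is multiplicatively invertible, the map $G=U(\D)\to \operatorname{Irr}(\D)$, $\sigma\mapsto [X_\sigma]$, is a bijection, so $F$ is essentially surjective. As both $\text{Vec}_G^{\omega}$ and $\D$ are semisimple $k$-linear with one-dimensional endomorphism rings on simples and vanishing Hom-spaces between non-isomorphic simples, $F$ induces isomorphisms $\Hom(k_\sigma,k_\tau)\to \Hom(X_\sigma,X_\tau)$ for all $\sigma,\tau$, and hence is fully faithful after additive extension. This gives an equivalence of $k$-linear categories; the issue is to promote it to a tensor equivalence.

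Next I would equip $F$ with the monoidal structure $F_{\sigma,\tau}:=t_{\sigma,\tau}\colon F(k_\sigma)\otimes F(k_\tau)\to F(k_{\sigma\tau})$, extended bi-additively. The coherence axiom for a tensor functor compares $t_{\sigma\tau,\rho}\circ(t_{\sigma,\tau}\otimes\id_{X_\rho})$ with $t_{\sigma,\tau\rho}\circ(\id_{X_\sigma}\otimes t_{\tau,\rho})$ after transport through the associators of $\text{Vec}_G^{\omega}$ and of $\D$. The associator in $\text{Vec}_G^{\omega}$ is $\omega(\sigma,\tau,\rho)\,\id$ by definition, while the equation displayed in the discussion above the proposition says precisely that the same scalar $\omega(\sigma,\tau,\rho)$ intertwines the two composites in $\D$. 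Thus coherence for $F$ is equivalent to the defining equation of $\omega$, which by hypothesis holds. So $F$ is a tensor equivalence with the $3$-cocycle $\omega$ representing the class $\omega(\D)$; replacing the choices $\{X_\sigma\}$ or $\{t_{\sigma,\tau}\}$ rescales the $F_{\sigma,\tau}$ and alters $\omega$ only by a coboundary, consistently with the well-definedness of $\omega(\D)\in H^3(G;k^*)$ explained above.

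For the second statement, if $\omega(\D)=0$ then $\omega=d\eta$ for some $2$-cochain $\eta$, and the standard equivalence $\text{Vec}_G^{\omega}\simeq \text{Vec}_G$ that rescales the tensor structure by $\eta$ (equivalently, redefining $t'_{\sigma,\tau}=\eta(\sigma,\tau)^{-1}t_{\sigma,\tau}$, which yields the constant cocycle $1$) composes with $F$ to give $\D\simeq \text{Vec}_G$. Conversely, a tensor equivalence $\D\simeq \text{Vec}_G$ transports a choice of representatives and structure isomorphisms with trivial cocycle to $\D$, showing that the class $\omega(\D)$ admits the trivial representative and hence vanishes. The only step requiring care is the identification of the coherence axiom with the displayed equation, and this is essentially tautological once one keeps track of the associator of $\D$ on the right-hand side; the rest of the argument is a repackaging of the paragraph preceding the proposition.
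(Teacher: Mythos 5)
Your proposal is correct and follows essentially the same route as the paper, which gives no separate argument but states that the proposition ``follows from the previous discussion'': you simply make explicit the tensor equivalence $F(k_\sigma)=X_\sigma$ with monoidal structure $t_{\sigma,\tau}$, identify the coherence axiom with the displayed defining equation of $\omega$, and handle the second statement by the standard coboundary rescaling. No gaps.
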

\qed

\subsection{Module categories over fusion categories}\label{subsection prelim module cat}

Let $(\C, \, \otimes, \, \text{\textbf{1}}, \, \alpha)$ be a
fusion category, where $\text{\textbf{1}}$ is
the unit object and $\alpha$ is the associativity constraint. Without loss of generality we may assume that $\alpha_{X,\unit,Y}=\alpha_{X,Y,\unit}=\alpha_{\unit,X,Y}=\id_{X\otimes Y}$ for all $X,Y \in C$.

A left {\em $\C$-module category}  (see \cite{O1}) is
a category $\M$ together with a bifunctor $\otimes: \C\times\M \to
\M$ and natural isomorphisms
$m_{X, \, Y \, M}: (X \otimes Y)\otimes M  \to X \otimes (Y \otimes M),$
 for all $M \in \M, \, X, Y \in \C$, such that
the following two equations hold for all $M \in \M, \, X, Y, Z \in \C$:

\begin{align}
    (\alpha_{X,Y,Z}\otimes M)m_{X,Y\otimes Z,M}(X\otimes m_{Y,Z,M}) &= m_{X\otimes Y,
    Z,M}m_{X,Y,Z\otimes M},\label{pentagono module cat}\\
     \unit\otimes M &= M.
\end{align}

For two left $\C$-modules categories  $\M$ and $\N$, a  $\C$-module
functor  $(F,\phi):\M\to \N$ consists of a functor  $F:\M\to \N$
and natural isomorphisms \[\phi_{X,M}:F(X\otimes M)\to X\otimes
F(M),\] such that
\begin{equation}\label{penta funtor modulo}
(X\otimes \phi_{Y,M})\phi_{X,Y\otimes M}F(m_{X,Y,M})=
m_{X,Y,F(M)}\phi_{X\otimes Y, M}
\end{equation}
for all $X, Y\in \C$, $M\in M$.

A $\C$-linear natural transformation  between  $\C$-module functors $(F,\phi),$ $(F',\phi'):\M\to \N$, is
a $k$-linear natural transformation  $\sigma:F\to F'$ such that \[\phi'_{X,M}\sigma_{X\otimes
M}=(X\otimes \sigma_M)\phi_{X,M},\] for all $X\in \C, M\in \M$.

We shall denote the category of $\C$-module functors and $\C$-linear natural transformations between $\C$-modules categories $\M,  \N$ by $\F_\C(\M,\N)$.

Two $\C$-module categories $\M_1$ and $\M_2$ are {\em equivalent}
if there exists a module functor from $\M_1$ to $\M_2$ which is an
equivalence of categories.
\medbreak

For two $\C$-module categories $\M_1$ and $\M_2$  their {\em direct
sum} is the category $\M_1 \oplus \M_2$ with the obvious module
category structure. A module category is {\em indecomposable} if it
is not equivalent to a direct sum of two non-trivial module
categories. It was shown in \cite{O1} that $\C$-module categories
are completely reducible, \textit{i.e.}, given a $\C$-module
subcategory $\N$ of a $\C$-module category $\M$ there is a unique
$\C$-module subcategory $\N'$ of $\M$ such that $\M= \N \oplus \N'$.
Consequently, any $\C$-module category $\M$ has a unique, up to a
permutation of summands, decomposition $\M = \oplus_{x\in S}\M_x$
into a direct sum of indecomposable $\C$-module categories.
\smallbreak

Let $\M$ be a right module category over $\C$. The {\em dual category} of $\C$ with respect to $\M$ is  the category $\C^*_\M:=\F_\C(\M,\M)$ whose objects are $\C$-module
functors from $\M$ to itself, and morphisms are natural
module transformations.
The category $\C^*_\M$ is a multi-fusion category with tensor product being composition
of module functors. It is known that if $\M$ is an indecomposable module category
over $\C$, then $\C^*_\M$ is a fusion category \cite{ENO}.

\subsection{Algebras in fusion categories}

Let $\C$ be a (strict) fusion category. An algebra $(A,\nabla,\eta)$
in $\C$ consists of an object $A$ and morphisms $\nabla:A\otimes
A\to A$, $\eta:\unit \to A $ such that \[\nabla(\nabla\otimes\id_A)=
\nabla(\id_A\otimes \nabla),\  \  \  \
\nabla(\eta\otimes\id_A)=\nabla(\id_A\otimes\eta)=\id_A.\]

Let $\M$ be a left $\C$-module category. A left $A$-module
$(M,\lambda)$ in $\M$ over an algebra $A$ in $\C$ is an object $M$
in $\M$ with a morphism $\lambda: A\otimes M\to M$ in $\M$ which is
associative  in the sense of
\[\lambda(\nabla\otimes\id_M)=\lambda(\id_A\otimes\lambda)m_{A,A,M}:
(A\otimes A)\otimes M\to M,\] and satisfies $\lambda
(\eta\otimes\id_M)=\id_M$. Morphisms are defined in the obvious way,
and the category of $A$-modules in $\M$ is denoted by $_A\M$.

In the same way we define the categories $\N_A$ of right $A$-modules
in $\N$, for a right $\C$-module, and the category  $_A\C_A$ of
$A$-bimodules in $\C$.

If $A$ is an algebra in $\C$, the category $\C_A$ is a left
$\C$-module category with action given by $\C\times \C_A\to \C_A,
(X,(M,\rho))\mapsto (X\otimes M, \id_X\otimes\rho).$

An algebra $A\in \C$ in a fusion category is called a \textit{semisimple algebra} if the category $\C_A$ is semisimple. If $A$ is semisimple then $_A\C$ and $_A\C_A$ are semisimple categories.

Let $\C$ be a fusion category, $A$ a semisimple algebra in $\C$, $M\in \C_A$, and $N\in _A\M$, where $\M$ is a left $\C$-module category. The tensor product $M\otimes_A N \in \M$ is defined as the coequalizer of

\[
\begindc{\commdiag}[50]
\obj(0,1)[aa]{$(M\otimes A)\otimes N$}
\obj(4,1)[bb]{$M\otimes N$}
\obj(6,1)[cc]{$M\otimes_A N$}
\mor{aa}{bb}{$(\id\otimes\lambda_N)m_{M,A,N}$}[+1,9]
\mor{aa}{bb}{$\lambda_{M}\otimes\id$}[-1,9]
\mor{bb}{cc}{}
\enddc
\]


A semisimple algebra $A$ is called indecomposable if $\C_A$ is an indecomposable left $\C$-module category. Under this condition, $(_A\C_A,\otimes_A, A)$ is a fusion category, see \cite{ENO}.

\subsection{Internal Hom and Morita theory}

An important technical tool in the study of module categories is the notion of internal $\Hom$. Let $\M$ be a module category over $\C$ and $M_1,M_2\in \M$. Consider the functor Hom$(- \otimes M_1,M_2)$ from the category $\C$ to the category of vector spaces. This functor is exact and thus is representable. The internal Hom $\underline{\text{Hom}}(M_1,M_2)$ is an object of $\C$ representing
the functor Hom$_\M(- \otimes M_1,M_2)$.  Given a non-zero object $M\in \M$, the internal Hom, $A=\underline{\text{Hom}}(M,M)\in \C$ has a natural algebra structure, such that the left $\C$-module category of right $A$-modules in $\C$ is equivalent to $\M$ as left $\C$-module categories, \cite[Theorem 1]{O1}.
\smallbreak
We recall a generalization of a theorem of Watts \cite{watts}, see \cite[Theorem 3.1]{action scha}.

\begin{teor}\label{teorema de watts}
Let $R$ and $S$ be semisimple algebras in a fusion category $\C$. The functor \[\mathcal{T}:\ _R\C_S \ni M\mapsto (-)\otimes _R M\in \F_\C(\C_R,\C_S)\] is a category equivalence. Its quasi-inverse equivalence maps a functor $F:\C_R \to\ \C_S$ to $\mathcal{T}^{-1}(F):= F(R)$, with the left $R$-module structure
\[
\begin{diagram}
  \node{R\otimes F(R) = F(R\otimes R)} \arrow{e,t}{F(\nabla)}\node{F(R)}
\end{diagram}
\]
\end{teor}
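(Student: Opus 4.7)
The plan is to verify that both $\mathcal{T}$ and the candidate quasi-inverse $F\mapsto F(R)$ are well-defined functors, and then to construct natural isomorphisms witnessing an equivalence.

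First I would check the formal data on each side. Given $M\in {}_R\C_S$, the functor $(-)\otimes_R M:\C_R\to\C_S$ carries a $\C$-module structure: the coherence isomorphism $(X\otimes N)\otimes_R M\cong X\otimes (N\otimes_R M)$ comes from the associativity constraint of $\C$ together with the universal property of coequalizers (tensoring by $X\in\C$ is exact in a fusion category, hence preserves coequalizers). Conversely, $F(R)$ lies in $\C_S$ by definition, and I would equip it with a left $R$-action via the composite
\[
R\otimes F(R) \xrightarrow{\phi_{R,R}^{-1}} F(R\otimes R)\xrightarrow{F(\nabla)} F(R),
\]
where $\phi$ denotes the $\C$-module structure on $F$. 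Associativity and unitality of this action follow from the pentagon \eqref{penta funtor modulo} for $\phi$ together with the algebra axioms on $R$; compatibility with the right $S$-action on $F(R)$ is automatic because $\phi_{R,R}$ and $F(\nabla)$ are morphisms of right $S$-modules.

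The natural isomorphism $\mathcal{T}^{-1}\mathcal{T}(M)=R\otimes_R M\cong M$ is standard: the left action $\lambda_M:R\otimes M\to M$ coequalizes the two parallel arrows $(R\otimes R)\otimes M\rightrightarrows R\otimes M$ and induces the desired iso, evidently natural in $M\in {}_R\C_S$.

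The crux of the proof, and the main obstacle, is the other direction: exhibiting a natural isomorphism $\mathcal{T}(\mathcal{T}^{-1}(F))(N)=N\otimes_R F(R)\cong F(N)$. I would first define $\psi_N:N\otimes F(R)\to F(N)$ as the composite
\[
N\otimes F(R)\xrightarrow{\phi_{N,R}^{-1}} F(N\otimes R)\xrightarrow{F(\rho_N)} F(N),
\]
where $\rho_N:N\otimes R\to N$ is the right $R$-action, viewed as a morphism in $\C_R$. A diagram chase using \eqref{penta funtor modulo} and the definition of the left $R$-action on $F(R)$ shows that $\psi_N$ coequalizes the two maps defining the relative tensor product, yielding $\bar\psi_N:N\otimes_R F(R)\to F(N)$ natural in $N$. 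To establish that $\bar\psi_N$ is invertible, I would exploit that every $N\in\C_R$ is a retract of the free module $N\otimes R$ via the section $\id_N\otimes\eta$ of $\rho_N$. On free modules $Y\otimes R$ one computes directly, unpacking the coequalizer, that $(Y\otimes R)\otimes_R F(R)\cong Y\otimes F(R)\cong F(Y\otimes R)$ via the module structure of $F$, and this iso coincides with $\bar\psi_{Y\otimes R}$. Additivity of both functors together with naturality of $\bar\psi$ then force $\bar\psi_N$ to be an isomorphism for every $N$. The remaining checks — compatibility of $\bar\psi_N$ with the $\C$-module and right $S$-structures, naturality in $F$, and functoriality of the two assignments — are straightforward verifications once the underlying map is in place.
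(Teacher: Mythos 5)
The paper does not actually prove this theorem: it is quoted from Schauenburg (\cite[Theorem 3.1]{action scha}) and closed with \qed, so there is no internal argument to compare yours against. Your proposal is the standard Eilenberg--Watts argument transported to the semisimple setting, and its architecture is sound: well-definedness of both assignments, the isomorphism $R\otimes_R M\cong M$ induced by $\lambda_M$, and the comparison map $N\otimes_R F(R)\to F(N)$ built from $\phi_{N,R}^{-1}$ and $F(\rho_N)$, checked to be invertible on free modules and propagated to all of $\C_R$ by additivity. What your route buys is a self-contained proof using only semisimplicity; Schauenburg works in a more general exact setting and must instead argue with the bar (co)equalizer presentation $N\otimes R\otimes R\rightrightarrows N\otimes R\to N$ and preservation of reflexive coequalizers, which is why his statement survives outside the fusion context.

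One step needs tightening. You claim $N$ is a retract of $N\otimes R$ \emph{via the section} $\id_N\otimes\eta$ of $\rho_N$. That map splits $\rho_N$ only in $\C$: it is not right $R$-linear (its $R$-linearity would force $(\id_N\otimes\eta)\circ\rho_N=\id_{N\otimes R}$, which fails in general), so you cannot apply the naturality of $\bar\psi$ --- which is naturality with respect to morphisms of $\C_R$ --- to it, and the retract argument as written does not close. The fix is immediate in your setting: $\rho_N$ is an epimorphism in $\C_R$ because it is split epi in $\C$, and since $R$ is a semisimple algebra the category $\C_R$ is semisimple, so $\rho_N$ admits a section \emph{in} $\C_R$; with that section, additivity and naturality of $\bar\psi$ give invertibility of $\bar\psi_N$ exactly as you intend. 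With this repair the proof is correct.
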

\qed

\begin{defin}
Let $\C$ be a fusion category, and $\M$ be a left $\C$-module
category. We shall say that an algebra $A$ in $\C$ represents $\M$
if $\C_A$ is equivalent to $\M$ as left $\C$-module categories.

If $A,B$ are algebras in $\C$, and $F:\C_A\to \C_B$ is a $\C$-module functor, we shall say that $\M\in _A\C_B$ represents $F$ if $F$ is equivalent to $(-)\otimes_A M$ as $\C$-module functors.

Two algebras $A,B$ in $\C$ are Morita equivalent if $\C_A\cong \C_B$
as $\C$-module categories.
\end{defin}

\begin{corol}\label{Morita}
Let $S$ and $R$ be semisimple algebras in $\C$, then they are Morita equivalent if and only if there exists an $(S,R)$-bimodule $M$ and an $(R,S)$-bimodule $N$, such that $M\otimes_B N \cong S$ as $S$-bimodule and $N\otimes_A M\cong R$ as $R$-bimodule.
\end{corol}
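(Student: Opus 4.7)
The plan is to deduce this corollary directly from Theorem \ref{teorema de watts}, by translating the notion of Morita equivalence of algebras into the language of bimodules via the equivalence $\mathcal{T}$.

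The key intermediate fact I need is: given $M \in {}_S\C_R$ and $N \in {}_R\C_S$, the composition of functors
\[ \C_S \xrightarrow{(-)\otimes_S M} \C_R \xrightarrow{(-)\otimes_R N} \C_S \]
is naturally isomorphic, as a $\C$-module functor, to $(-)\otimes_S (M\otimes_R N)$, so it is represented by the $(S,S)$-bimodule $M\otimes_R N$. I would prove this by using associativity of tensor products over semisimple algebras in $\C$, exploiting the universal property of the coequalizer that defines $\otimes_R$. Moreover, the identity functor $\id_{\C_S}$ is represented by $S$ itself (since $X\otimes_S S\cong X$ naturally for every $X\in \C_S$), and analogously for $R$.

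For the ($\Leftarrow$) direction, assume $M\in {}_S\C_R$ and $N\in {}_R\C_S$ satisfy $M\otimes_R N\cong S$ as $(S,S)$-bimodules and $N\otimes_S M\cong R$ as $(R,R)$-bimodules. Define $F=(-)\otimes_S M\colon \C_S\to \C_R$ and $G=(-)\otimes_R N\colon \C_R\to \C_S$. By the intermediate fact, $G\circ F$ is represented by $M\otimes_R N\cong S$, hence is isomorphic to $\id_{\C_S}$; similarly $F\circ G\cong \id_{\C_R}$. Thus $F$ and $G$ are quasi-inverse equivalences of $\C$-module categories, so $S$ and $R$ are Morita equivalent.

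For the ($\Rightarrow$) direction, suppose $F\colon \C_S\to \C_R$ is an equivalence of $\C$-module categories with quasi-inverse $G\colon \C_R\to \C_S$. By Theorem \ref{teorema de watts}, there exist an $(S,R)$-bimodule $M:=F(S)$ representing $F$ and an $(R,S)$-bimodule $N:=G(R)$ representing $G$. The natural isomorphisms $G\circ F\cong \id_{\C_S}$ and $F\circ G\cong \id_{\C_R}$, once again via the intermediate fact and the fact that the quasi-inverse of $\mathcal{T}$ is a category equivalence (and so reflects isomorphisms of functors as isomorphisms of bimodules), translate precisely into $M\otimes_R N\cong S$ as $(S,S)$-bimodules and $N\otimes_S M\cong R$ as $(R,R)$-bimodules.

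The main obstacle is the intermediate fact on composition of functors, which requires a careful compatibility check between the coequalizer defining $\otimes_R$ and the right $R$-action on $M$ inside $X\otimes_S M$. This is entirely standard but is the only non-formal step; once it is in place, the rest of the argument is a direct translation under the equivalence $\mathcal{T}$ of Theorem \ref{teorema de watts}.
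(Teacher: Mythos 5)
Your proposal is correct and follows exactly the route the paper intends: the corollary is stated with no written proof, as an immediate consequence of Theorem \ref{teorema de watts}, and your argument simply fleshes out that deduction (composition of the functors $(-)\otimes_S M$ and $(-)\otimes_R N$ corresponds to the bimodule $M\otimes_R N$, and the identity functor is represented by the algebra itself). You also correctly identify the one non-formal step (associativity/compatibility of the relative tensor product), and you implicitly fix the typos $\otimes_B$, $\otimes_A$ in the statement, which should read $\otimes_R$, $\otimes_S$.
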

\qed

\subsection{Tensor products of module categories}

Recall that a monoidal category is called strict if the
associativity constraint is the identity. A module category
$(\M,\mu)$ over a strict tensor category is called a \emph{strict
module category}  if the constraint $\mu$ is the identity. By
\cite[Proposition 2.2]{G} we may assume that every fusion category
and every module category is strict.

\medbreak
Let $\C, \D$ be fusion categories. By definition, a $(\C,\D)$-bimodule category is a module category over $\C\boxtimes\D^{\text{rev}}$, where $\D^{\text{rev}}$ is the category $\D$ with reversed tensor product, and $\boxtimes$ is the  Deligne tensor product of abelian categories, see \cite{D}.

Let $\A$ be an abelian category and $\M, \N$ left and right (strict) $\C$-module categories, respectively.

\begin{defin}\cite[Definition 3.1]{ENO3}
Let $F :\M\times \N\to \A$ be a bifunctor exact in every
argument. We say that $F$ is $\C$-balanced if there is a natural family of
isomorphisms \[b_{M,X,N} : F(M \otimes X, N) \to
F(M, X \otimes N),\]
such that \[b_{M,X\otimes Y,N}= b_{M,X,Y \otimes N}\circ b_{M\otimes X,Y,N},\] for all $M\in \M, N\in \N, X,Y\in \C$.
\end{defin}

\begin{defin}\cite[Definition 3.3]{ENO3}
A tensor product of a right $\C$-module category $\M$ and
a left $\C$-module category $\N$ is an abelian category $\M\boxtimes_\C\N$ together
with a $\C$-balanced functor
 \[B_{\M,\N} :\M\times N \to M\boxtimes_\C \N\]
inducing, for every abelian category $\A$, an equivalence between the
category of $\C$-balanced functors from $\M\times\N$ to $\A$ and the category of
functors from $\M\boxtimes_\C \N$ to $\A$:
\[\F_{bal}(\M\times \N, \A) \cong \F(M\boxtimes_\C \N, \A).\]
\end{defin}

\begin{obs}
\begin{enumerate}
   \item  The existence of the tensor product for module categories over fusion categories was proved in \cite{ENO3}.
  \item If the modules categories are semisimple then the tensor product is a semisimple category, see \cite{ENO3}.
\end{enumerate}
\end{obs}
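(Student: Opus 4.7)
The plan is to construct $\M \boxtimes_\C \N$ explicitly by passing to algebras in $\C$, and then to read off semisimplicity from the fact that the representing algebras are semisimple. Concretely, I would first pick nonzero objects $M\in\M$ and $N\in\N$ (after reducing to the indecomposable case) and form the internal Hom algebras $A=\underline{\Hom}(M,M)$ and $B=\underline{\Hom}(N,N)$ in $\C$. By \cite[Theorem 1]{O1} applied on both sides, $\M\simeq \C_A$ as right $\C$-module categories and $\N\simeq {}_B\C$ as left $\C$-module categories, with $A$ and $B$ semisimple algebras in $\C$ (since $\M,\N$ are semisimple). I would then \emph{define} $\M\boxtimes_\C\N$ to be the category ${}_B\C_A$ of $(B,A)$-bimodules in $\C$, together with the bifunctor $B_{\M,\N}:\M\times\N\to{}_B\C_A$ sending $(X,Y)\mapsto Y\otimes X$, where the left $B$-action comes from $Y\in{}_B\C$ and the right $A$-action from $X\in\C_A$. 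The balancing isomorphism $b_{X,Z,Y}:B_{\M,\N}(X\otimes Z,Y)\to B_{\M,\N}(X,Z\otimes Y)$ is just the associativity constraint of $\C$, and the cocycle condition for $b$ is immediate from the pentagon.

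Next, I would verify the universal property. Given any $k$-linear abelian category $\A$ and any $\C$-balanced bifunctor $F:\M\times\N\to\A$, the idea is to produce $\widetilde F:{}_B\C_A\to\A$ using the Eilenberg--Watts type Theorem \ref{teorema de watts}. Fixing $X=A\in\C_A$ (viewed as the free right $A$-module on $\unit$), the functor $F(A,-):\N\to\A$ is $\C$-balanced in the second slot and hence induces a functor out of $\N$; a similar argument on the left slot, together with the balancing datum $b$, shows that $F$ canonically extends to a functor on $(B,A)$-bimodules by taking a coequalizer presentation of a bimodule $Z$ as $B\otimes Z_0\otimes A$ modulo the two actions. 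Essential surjectivity is clear from this coequalizer, and faithfulness/fullness follow from the universal property of coequalizers in $\A$. The uniqueness of $\widetilde F$ up to unique isomorphism is likewise forced by the requirement $\widetilde F\circ B_{\M,\N}\cong F$ on generators of the form $Y\otimes X$.

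For part (2), semisimplicity of ${}_B\C_A$ is where I would spend the most care. The key point is that semisimplicity of an algebra $R$ in $\C$ in the sense that $\C_R$ is semisimple is equivalent to $R$ being separable, i.e.\ the multiplication $\nabla:R\otimes R\to R$ admits an $R$-bimodule splitting. Since $A$ and $B$ are semisimple by hypothesis, one checks that $B\otimes A$, viewed as an algebra in $\C\boxtimes \C^{\mathrm{rev}}$ (so that bimodules in $\C$ become modules over this external product), inherits a separability idempotent from those of $A$ and $B$. Equivalently, one may write ${}_B\C_A$ as modules over $B$ in the fusion category ${}_\C\C_A$ (which is fusion when $A$ is indecomposable, by the paragraph following Theorem \ref{teorema de watts}), and conclude semisimplicity from the semisimplicity of $B$.

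The main obstacle is bookkeeping: rigorously checking that the assignment $F\mapsto\widetilde F$ really gives an equivalence $\F_{bal}(\M\times\N,\A)\simeq \F({}_B\C_A,\A)$ requires unraveling the balancing datum $b$ into the bimodule structure, and then checking naturality of this equivalence in $\A$. The reduction to the indecomposable case is harmless — $\M=\bigoplus\M_i$, $\N=\bigoplus\N_j$ gives $\M\boxtimes_\C\N=\bigoplus_{i,j}\M_i\boxtimes_\C\N_j$ — so the only real work is the representability argument, and the semisimplicity assertion then reduces to semisimplicity of each summand, which is the separability argument above.
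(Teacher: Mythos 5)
First, note that the paper offers no proof of this remark: both assertions are simply cited from \cite{ENO3}, so there is no internal argument to compare against. Your plan --- realize the module categories as module categories over algebras $A,B$ in $\C$ and exhibit the relative tensor product as a category of bimodules, then get semisimplicity from separability of semisimple algebras --- is exactly the standard route taken in \cite{ENO3}, so the strategy is sound.

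There are, however, two concrete problems in the execution. The first is a left/right mix-up that breaks your key claim. The category $\C_A$ of right $A$-modules is naturally a \emph{left} $\C$-module category, and ${}_B\C$ is naturally a \emph{right} $\C$-module category; you assign them the opposite sides. With your assignment $B_{\M,\N}(X,Y)=Y\otimes X$ the balanced structure would have to compare $Y\otimes X\otimes Z$ with $Z\otimes Y\otimes X$, which is \emph{not} the associativity constraint --- a tensor factor must be moved across the whole word, and no such natural isomorphism exists in a general fusion category. The fix is to take $\M\simeq{}_A\C$ (left $A$-modules, a right $\C$-module category) and $\N\simeq\C_B$ (right $B$-modules, a left $\C$-module category), set $B_{\M,\N}(M,N)=M\otimes N\in{}_A\C_B$, and then the balancing $ (M\otimes X)\otimes N\to M\otimes(X\otimes N)$ really is associativity. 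The second problem is that the verification of the universal property is not a proof: what must be shown is that precomposition with $B_{\M,\N}$ gives an equivalence $\F({}_A\C_B,\A)\to\F_{bal}(\M\times\N,\A)$, whereas you discuss ``essential surjectivity'' and ``fullness/faithfulness'' of the extension $\widetilde F$ itself, which is not the relevant statement. The actual argument (as in \cite[Proposition 3.5]{ENO3}) reconstructs $\widetilde F$ on an arbitrary bimodule from its coequalizer presentation by free modules and checks that this is quasi-inverse to restriction along $B_{\M,\N}$; this step is where all the work lies and it is only gestured at. Your semisimplicity argument for part (2) is essentially correct and matches the standard one --- indeed the paper already records that ${}_A\C$ and ${}_A\C_A$ are semisimple for a semisimple algebra $A$, and ${}_A\C_B$ embeds as a direct summand of ${}_{A\oplus B}\C_{A\oplus B}$ --- but it inherits the notational confusion above (e.g.\ $\C_A$ is a module category, not a fusion category; the fusion category is ${}_A\C_A$).
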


Given a right  $\C$-module functor $F : \M \to \M'$ and a left
$\C$-module functor $G : \N \to \N'$ note that $B_{\M',\N'}(F
\boxtimes G) : \M\boxtimes \N \to \M' \boxtimes_\C \N'$ is
$\C$-balanced. Thus the universality of $B$ implies the existence of
a unique right functor $F \boxtimes_\C G := \overline{B_{\M',\N'}(F
\boxtimes G)}$ making the diagram

\[
\begin{diagram}
\node{\M\boxtimes \N} \arrow{s,l}{B_{\M,\N}} \arrow{e,l}{F\boxtimes G} \node{\M'\boxtimes\N'} \arrow{s,r}{B_{\M',\N'}}\\
\node{\M\boxtimes_{\C}\N}\arrow{e,t}{F\boxtimes_\C G} \node{\M'\boxtimes_\C \N'}
\end{diagram}
\]commutative.
The bihomorphism $\boxtimes_\C$ is functorial in module functors, \emph{i.e.}, $(F'\boxtimes_\C E')(F\boxtimes_\C E)= F'F\boxtimes_\C E'E$.

\begin{rmks}\label{remarks sobre producto tensorial}

\begin{enumerate}
  \item If  $\M$ is a $(\C,\mathcal{E})$-bimodule category and $\N$ is an $(\mathcal{E},\D)$-bimodule
category, then $\M\boxtimes_\mathcal{E}\N$ is a $(\C,\D)$-bimodule category and $B_{M,N}$ is a $(\C,\D)$-bimodule functor, see \cite[Proposition 3.13]{Justin}.
  \item Let $\M$ be a $(\C,\D)$-bimodule category. The $\C$-module action in $\M$ is balanced. Let $r_\M :\C\boxtimes_\C\M \to \M$ denote the unique functor factoring through $B_{\M,\D}$. In \cite[Proposition 3.15]{Justin} it was proved that $r_\M$ is a $(\C,\D)$-module equivalence.
  \item Let $\M$ be a right $\C$-module category, $\N$ a $(\C,\D)$-bimodule category,
and $\mathcal{K}$ a left $\D$-module category. Then by \cite[Proposition 3.15]{Justin}, there is a canonical equivalence $(\M\boxtimes_\C \N)\boxtimes_\mathcal{D} \mathcal{K} \cong
\M\boxtimes_\C (\N \boxtimes\D \mathcal{K})$ of bimodule categories. Hence the notation $M\boxtimes_\C \N \boxtimes_\D \mathcal{K}$ will yield no ambiguity.
\end{enumerate}
\end{rmks}

\subsection{Graded fusion categories}
Let $\C$ be a fusion category and let $G$ be a finite group. We say that $\C$ is graded by
$G$ if $\C =\oplus_{\sigma\in G} \C_\sigma$, and for any $\sigma, \tau \in G$, one has $\otimes: \C_\sigma \times \C_\tau \to \C_{\sigma\tau}$.
\smallbreak
The fusion subcategory $\C_e$ corresponding to the neutral element $e \in G$ is called the trivial component of the $G$-graded category $\C$. A grading is faithful if $\C_\sigma \neq 0$ for all $\sigma \in G$, in this paper we shall consider only faithful gradings.

\subsection{Graded module categories}

\begin{defin} Let $\C=\oplus_{\sigma\in G}\C_\sigma$ be a graded fusion category and let $X$ be a left
$G$-set. A  \emph{left $X$-graded $\C$-module category}  is a left $\C$-module category $\M$ endowed
with a decomposition  \[\M=\oplus_{x\in X}\M_x,\] into a direct sum of full abelian subcategories,
such that for all $\sigma\in G$, $x\in X$, the bifunctor  $\otimes$ maps $\C_\sigma\times \M_x$ to
$\M_{\sigma x}$. \end{defin} An  \emph{$X$-graded $\C$-module functor}  $F:\M\to \N$ is a $\C$-module
functor such that $F(\M_x)$ is mapped to $\N_x$, for all $x\in X$.

\section{Induced module categories}

Let $\C$ be a fusion category and let $\D\subset \C$ be a fusion subcategory. Given a left $\D$-module category $\M$ and an algebra $A$ in $\D$, such that $\M\cong \D_A$ as left $\D$-module categories, we define the \emph{induced module category} from $\D$ to $\C$, denoted by $\ind_\D^\C(\M)$, as the $\C$-module category $\C_A$.

If  $\M= \D_A$ and $\N=\D_B$ are $\D$-module categories and  $F:\M\to \N$ is a $\D$-module functor represented by a $(B,A)$-bimodule $M$, then $M$ defines a $\C$-module functor $\ind_\D^\C(F)=(-)\otimes_AM: \ind_\D^\C(\M)\to \ind_\D^\C(\N)$, called the \textit{induced module functor}.

\begin{obs}
\begin{enumerate}
  \item By Corollary \ref{Morita}, the equivalence class of Ind$_\D^\C(\M)$ as $\C$-module category  does not depend on the algebra $A$.

  \item If $\C''\subset \C'\subset \C$ are fusion categories and $\M$ is a $\C''$-module category then $\ind_{\C''}^{\C'}(\ind_{\C'}^{\C}(\M))=\ind_{\C''}^{\C}(\M)$. This is analogous for functor and natural transformations of module categories.
\end{enumerate}
\end{obs}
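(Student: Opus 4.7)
The plan is to handle both parts of the remark by reducing everything to the Morita-theoretic dictionary of Theorem~\ref{teorema de watts} and Corollary~\ref{Morita}, applied once inside $\D$ and once inside the ambient category $\C$.

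\emph{For part (1),} suppose two algebras $A$ and $B$ in $\D$ both represent $\M$, so that $\D_A \cong \M \cong \D_B$ as left $\D$-module categories; equivalently, $A$ and $B$ are Morita equivalent in $\D$. By Corollary~\ref{Morita} there exist bimodules $P\in {}_A\D_B$ and $Q\in {}_B\D_A$ together with bimodule isomorphisms $P\otimes_B Q\cong A$ and $Q\otimes_A P\cong B$. The essential point is that $\D$ is a \emph{full} fusion subcategory of $\C$: the same objects $P, Q$ with the same algebra, module and bimodule structure morphisms are objects and morphisms in $\C$, and the coequalizer diagrams defining the relative tensor products $\otimes_A$ and $\otimes_B$ already live in $\D$, so they compute to the same thing inside $\C$. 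Applying the converse direction of Corollary~\ref{Morita} inside $\C$ then yields $\C_A \cong \C_B$ as $\C$-module categories, which is exactly the desired independence.

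\emph{For part (2),} pick any algebra $A\in \C''$ with $\M \cong \C''_A$. Unwinding the definition of induction, $\ind_{\C''}^{\C'}(\M)=\C'_A$ and hence $\ind_{\C'}^{\C}(\ind_{\C''}^{\C'}(\M))=\C_A=\ind_{\C''}^{\C}(\M)$, and by part (1) this equivalence class of $\C$-module categories is independent of the choice of $A$. For functors, if $F\colon \C''_A\to \C''_B$ is a $\C''$-module functor, then by Theorem~\ref{teorema de watts} it is represented by a unique bimodule $M\in {}_A\C''_B$, and the induced functor is $(-)\otimes_A M$ whether computed on $\C'_A\to \C'_B$ or on $\C_A\to \C_B$; transitivity is then tautological, and natural transformations are handled identically via the equivalence of Theorem~\ref{teorema de watts}.

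\emph{Expected obstacle.} There is no genuine obstacle here; the only point that requires a moment of care is the verification that the coequalizer defining the relative tensor product in $\D$ coincides with the one computed in $\C$. This is immediate from the fullness of $\D\subset \C$, since the two parallel arrows whose coequalizer defines $\otimes_A$ are the same diagram viewed in either category. The whole remark is therefore a direct consequence of the Morita dictionary together with functoriality of the assignment $A\mapsto \C_A$ under enlargement of the ambient fusion category.
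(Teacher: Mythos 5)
Your argument is correct and is exactly the justification the paper intends: the paper offers no proof beyond the citation of Corollary~\ref{Morita}, and your expansion --- transporting the Morita-inverse bimodules from $\D$ to $\C$ (using that the relative tensor products agree because $\D$ is a full fusion subcategory, hence closed under the relevant subquotients so the coequalizers coincide) and unwinding the definition of induction together with Theorem~\ref{teorema de watts} for part (2) --- is the standard way to fill in the details. You also silently corrected the paper's typo in the order of composition in part (2), writing $\ind_{\C'}^{\C}(\ind_{\C''}^{\C'}(\M))$ as intended.
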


Given a $G$-graded fusion category $\C$, and a subgroup $H\subset G$, we shall denote by $\C_H$ the fusion subcategory $\oplus_{h\in H}\C_h$.

\begin{prop}
For each subgroup $H\subset G, $ induction defines a 2-functor from the 2-category of $\C_H$-module categories to the 2-category of $\C$-module categories graded over $G/H$.
\end{prop}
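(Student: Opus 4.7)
The plan is to verify, in turn, that induction sends objects of the source 2-category to $G/H$-graded $\C$-module categories, sends 1-morphisms to $G/H$-graded $\C$-module functors, sends 2-morphisms to $\C$-module natural transformations between the induced functors, and finally that it respects vertical and horizontal composition together with identities. Throughout, I would realize $\C_H$-module categories as $(\C_H)_A$ for an algebra $A\in\C_H$, and module functors as bimodules, via Theorem \ref{teorema de watts}; this translates every check into a statement about algebras and bimodules in $\C$.

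For the object level, fix a system of coset representatives $\{g_i\}_{i\in I}$ for $G/H$ and, for $[g]\in G/H$, set $\C_{gH}:=\oplus_{h\in H}\C_{gh}$; this is well defined as a subcategory of $\C$ since $gH$ depends only on the coset. Because $A\in\C_H$, the right $A$-action on $\C$ preserves each $\C_{gH}$, and therefore
\[\ind_{\C_H}^\C(\M) = \C_A = \bigoplus_{[g]\in G/H}(\C_{gH})_A,\]
where $(\C_{gH})_A$ is the full subcategory of right $A$-modules whose underlying object lies in $\C_{gH}$. The $\C$-action satisfies $\C_\sigma\otimes(\C_{gH})_A \subset (\C_{\sigma g H})_A$, because for $X\in\C_\sigma$, $Y\in\C_{gH}$, the tensor product $X\otimes Y$ decomposes in $\C$ into summands lying in $\C_{\sigma g h}$ for $h\in H$, i.e.\ in $\C_{\sigma g H}$. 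This is exactly the axiom for a $G/H$-graded $\C$-module category.

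For 1-morphisms and 2-morphisms, apply Theorem \ref{teorema de watts}: a $\C_H$-module functor $F:(\C_H)_A\to(\C_H)_B$ corresponds to a $(B,A)$-bimodule $M\in\C_H$, and the induced functor is $\ind_{\C_H}^\C(F)=(-)\otimes_A M : \C_A\to\C_B$. Since $M\in\C_H$, and $\otimes_A$ is a quotient of a tensor product in $\C$, the same computation as above shows that $\ind_{\C_H}^\C(F)$ sends $(\C_{gH})_A$ into $(\C_{gH})_B$, hence is a $G/H$-graded $\C$-module functor. A $\C_H$-module natural transformation $\sigma:F\to F'$ corresponds, again by Watts' theorem, to a morphism of bimodules $M\to M'$ in $\C_H$, which induces a $\C$-module natural transformation between the induced functors in the obvious way, automatically compatible with the $G/H$-grading.

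Finally, 2-functoriality reduces to standard properties of the tensor product of bimodules: vertical composition of natural transformations corresponds to composition of bimodule morphisms; horizontal composition of module functors corresponds, through Watts' theorem, to the tensor product $(-)\otimes_A M'\otimes_B M''$ of bimodules over the intermediate algebra, whose associativity and unit coherence are recalled in Remarks \ref{remarks sobre producto tensorial}; and the identity functor is represented by the regular bimodule. I expect the only genuinely new content to be the decomposition $\C_A=\oplus_{[g]}(\C_{gH})_A$ and the compatibility of the $\C$-action with this grading; the remaining pieces are a formal consequence of Theorem \ref{teorema de watts} and the bimodule calculus already introduced in Section 2, so the main obstacle is purely bookkeeping rather than mathematical.
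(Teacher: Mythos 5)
Your proposal is correct and follows essentially the same route as the paper: realize $\M=(\C_H)_A$ for an algebra $A\in\C_H$, decompose right $A$-modules in $\C$ by cosets to obtain the $G/H$-grading on $\C_A$, and use Theorem \ref{teorema de watts} to represent module functors by bimodules in $\C_H$, whose induced functors are automatically graded. You spell out the 2-morphism level and the compatibility of the $\C$-action with the grading, which the paper leaves implicit, but the underlying argument is identical.
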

\begin{proof}
Let $\M$ be a $\C_H$-module category, and suppose that $\M= (\C_H)_{A}$ for some algebra $A\in \C_H$. If $M\in \C_A$, then $M=\bigoplus_{gH\in G/G} M_{gH}$, where $M_{gH}\in \C_{gH}$. Thus $M_{gH}\in\C_A$, so $\ind_{\C_H}^\C(\M)=\C_A$ is graded by $G/H$.

Let $\M$ and $\N$ be $\C_H$-module categories such that $\M=(\C_H)_A$ and $\N=(\C_H)_B$ for algebras $A, B\in \C_H$. Every $\C_H$-module functor from $\M$ to $\N$ is equivalent to the functor $(-)\otimes_A M $ for some $(A$-$B)$-bimodule in $\C_H$ (see Theorem \ref{teorema de watts}). Then the induced functor from $\C_A$ to $\C_B$ is $(-)\otimes_A M $, and it is $G/H$-graded.
\end{proof}

\begin{teor}\label{2-equivalencia con graduados}
Let $\C=\oplus_{\sigma\in G}\C_\sigma$ be a $G$-graded fusion
category. For each $H\subset G$, the induction of module categories
and module functor defines a 2-equivalence  from the 2-category of
module categories over $\C_H$ and $\C$-module categories
$G/H$-graded.
\end{teor}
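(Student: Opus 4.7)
My plan is to construct a quasi-inverse 2-functor $R$ by restriction to the trivial component: set $R(\N):=\N_{eH}$, endowed with the $\C_H$-action obtained from the $\C$-action (well-defined because $\C_h\otimes \N_{eH}\subset \N_{hH}=\N_{eH}$ for $h\in H$). On a $G/H$-graded $\C$-module functor $F:\N\to\N'$ the 2-functor $R$ acts by restriction, $F|_{\N_{eH}}:\N_{eH}\to\N'_{eH}$, and similarly on 2-morphisms; these restrictions are automatically $\C_H$-module functors and $\C_H$-linear natural transformations.

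For the composite $R\circ\ind_{\C_H}^{\C}$ being equivalent to the identity: write $\M\cong(\C_H)_A$ for some algebra $A\in\C_H$. Then $\ind_{\C_H}^{\C}(\M)=\C_A$ is $G/H$-graded by the preceding proposition with $(\C_A)_{gH}=\bigoplus_{\sigma\in gH}(\C_\sigma)_A$, so its trivial component is $\bigoplus_{h\in H}(\C_h)_A=(\C_H)_A=\M$. The same identification works on functors and natural transformations by Theorem \ref{teorema de watts}, since an $(A,B)$-bimodule $M\in\C_H$ represents the same $\C_H$-module functor before and after induction.

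For the composite $\ind_{\C_H}^{\C}\circ R$ being equivalent to the identity: given $\N=\bigoplus_{gH}\N_{gH}$, pick a progenerator $N_0\in\N_{eH}$ so that $A:=\underline{\Hom}_{\C_H}(N_0,N_0)\in\C_H$ and $\N_{eH}\cong(\C_H)_A$. The crucial computation is that $\underline{\Hom}_\C(N_0,N_0)\cong A$ as algebras in $\C$: its $\sigma$-component represents the functor $\Hom_\N(\C_\sigma\otimes N_0,-)$ evaluated at $N_0$, which vanishes for $\sigma\notin H$ because the $G/H$-grading of $\N$ places $\C_\sigma\otimes N_0\in\N_{\sigma H}$ in a different summand from $N_0\in\N_{eH}$. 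Since $N_0$ also generates $\N$ as a $\C$-module (each $\N_{gH}$ is the image of $\C_g\otimes_{\C_e}\N_{eH}$ under the action, and the faithfulness of the grading makes $\C_g$ an invertible $\C_e$-bimodule, so this generates all of $\N_{gH}$), we obtain a $G/H$-graded $\C$-module equivalence $\N\cong\C_A=\ind_{\C_H}^{\C}(\N_{eH})$. A graded $\C$-module functor $F:\C_A\to\C_B$ is represented by some $(A,B)$-bimodule $M\in\C$, and the condition $F(\N_{eH})\subset\N'_{eH}$ forces $M\in\C_H$, so $F$ is induced from its restriction; a similar argument handles 2-morphisms.

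The main obstacle is the second composite, specifically the claims that $N_0$ generates $\N$ as a $\C$-module and that $\underline{\Hom}_\C(N_0,N_0)\cong\underline{\Hom}_{\C_H}(N_0,N_0)$. Both depend essentially on the faithfulness of the grading, which makes each $\C_\sigma$ an invertible $\C_e$-bimodule, together with the $G/H$-grading of $\N$ to force the off-diagonal Hom contributions to vanish; all other steps are routine applications of Watts' theorem and Morita theory.
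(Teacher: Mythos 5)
Your proposal is correct and follows essentially the same route as the paper: the core of both arguments is the observation that for $N_0$ in the trivial component and $X\in\C_\sigma$ with $\sigma\notin H$ one has $\Hom_{\N}(X\otimes N_0,N_0)=0$, so the internal Hom taken in $\C$ coincides with the one taken in $\C_H$, together with Theorem \ref{teorema de watts} to handle module functors and natural transformations. Your write-up is merely a more explicit packaging of the paper's argument as a quasi-inverse given by restriction to the trivial component (and is, if anything, slightly more careful about requiring $N_0$ to generate).
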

\begin{proof}
Let $\M$ be a $\C$-module category $G/H$-graded. Then $\M_H$ is a
$\C_H$-module category. For each $0\neq M \in \M_H$, the internal
Hom, $\underline{\text{Hom}}(M,M)\in\C_H$ with respect to the module
category $\M_x$ is also the internal Hom in $\C$ of $\M$. In fact,
if $X\in \C_\sigma$, and  $\sigma\notin H$, then $X\otimes M\notin
\M_x$, so
\begin{align*}
\Hom_\M(X\otimes M,M)= 0= \Hom_\C(X,\underline{\Hom}(M,M)).
\end{align*}
Then $\M\cong \C_{\underline{\Hom}(M,M)} = $ Ind$_{\C_H}^\C(\M_x)$.

Let $\M$ and $\N$ be $\C$-module categories graded over $G/H$ such
that $\M=\C_A$ and $\N=\C_B$ for algebras $A, B\in \C_H$. Then by
Theorem \ref{teorema de watts} every $\C$-module functor from $\M$
to $\N$ is equivalent to the functor $(-)\otimes_A M $ for some
$(A$-$B)$-bimodule in $\C$. If $M$ defines a $G/H$-module functor
then $M\in \C_H$, so by definition is the induced of a $\C_H$-module
functor.
\end{proof}

\begin{obs}
By  Theorem \ref{2-equivalencia con graduados} the 2-category of
module categories over $\C_e$ and the 2-category of $G$-graded
$\C$-module categories are 2-equivalent.
\end{obs}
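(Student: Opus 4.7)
The plan is to derive this remark as an immediate specialization of Theorem \ref{2-equivalencia con graduados} to the case $H=\{e\}$. First I would observe that when $H$ is the trivial subgroup, the fusion subcategory $\C_H$ is by definition $\C_e$, so the 2-category of $\C_H$-module categories in the statement of Theorem \ref{2-equivalencia con graduados} specializes to the 2-category of $\C_e$-module categories.

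Next I would check that the target side matches. The left coset space $G/\{e\}$ is canonically isomorphic to $G$ as a left $G$-set (every element is its own coset, and left multiplication by $\sigma$ corresponds to left multiplication on $G$). Under this identification, a $G/\{e\}$-graded $\C$-module category is exactly a $\C$-module category equipped with a decomposition $\M = \bigoplus_{\sigma \in G} \M_\sigma$ such that $\otimes : \C_\sigma \times \M_\tau \to \M_{\sigma\tau}$, which is the definition of a $G$-graded $\C$-module category. Likewise, $G/\{e\}$-graded module functors coincide with $G$-graded ones.

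Finally, I would invoke Theorem \ref{2-equivalencia con graduados} with $H=\{e\}$: the induction 2-functor $\ind_{\C_e}^\C$ from the 2-category of $\C_e$-module categories to the 2-category of $G$-graded $\C$-module categories is a 2-equivalence. There is no real obstacle here; the only thing to verify is the purely notational compatibility between the $G/H$-grading conventions and the $G$-grading conventions, which is immediate from the definitions in the preceding subsection. The quasi-inverse sends a $G$-graded $\C$-module category $\M = \bigoplus_{\sigma \in G} \M_\sigma$ to its trivial component $\M_e$, viewed as a $\C_e$-module category, exactly as in the proof of Theorem \ref{2-equivalencia con graduados}.
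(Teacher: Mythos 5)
Your proposal is correct and matches the paper's intent exactly: the remark is stated as an immediate specialization of Theorem \ref{2-equivalencia con graduados} to the trivial subgroup $H=\{e\}$, where $\C_{\{e\}}=\C_e$ and $G/\{e\}\cong G$ as left $G$-sets, so $G/\{e\}$-graded module categories and functors coincide with $G$-graded ones. The notational checks you perform are all that is needed, and your identification of the quasi-inverse as $\M\mapsto\M_e$ agrees with the proof of the theorem.
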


\begin{prop}\label{Prop Inducida como producto tensorial}
Let $\C$ be a $G$-graded fusion category. If $\M$ is a left
$\C_H$-module category then $\C\boxtimes_{\C_H}\M \cong
\ind_{\C_H}^\C(\M)$ as $\C$-module categories.
\end{prop}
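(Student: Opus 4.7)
The plan is to recognize $\C\boxtimes_{\C_H}\M$ as a $G/H$-graded left $\C$-module category whose trivial component is equivalent to $\M$, and then invoke Theorem \ref{2-equivalencia con graduados} to identify it with $\ind_{\C_H}^\C(\M)$.

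First I would observe that $\C$ carries a natural $(\C,\C_H)$-bimodule structure, where both actions are given by the tensor product of $\C$. By Remark \ref{remarks sobre producto tensorial}(1), this endows $\C\boxtimes_{\C_H}\M$ with the structure of a left $\C$-module category. Next, I would decompose $\C$ into left $H$-cosets: $\C=\bigoplus_{gH\in G/H}\C_{gH}$ with $\C_{gH}=\bigoplus_{\sigma\in gH}\C_\sigma$. Since right multiplication by objects of $\C_H$ sends $\C_\sigma$ into $\C_{\sigma H}=\C_{gH}$ whenever $\sigma\in gH$, each $\C_{gH}$ is a right $\C_H$-submodule of $\C$. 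Because the Deligne tensor product and its balanced quotient both commute with direct sums, this yields
$$\C\boxtimes_{\C_H}\M = \bigoplus_{gH\in G/H}\C_{gH}\boxtimes_{\C_H}\M.$$
Moreover, left multiplication by $\C_\sigma$ carries $\C_{gH}$ into $\C_{\sigma gH}$, and hence the direct sum above is a $G/H$-grading of the left $\C$-module category $\C\boxtimes_{\C_H}\M$.

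The trivial component is $\C_H\boxtimes_{\C_H}\M$, which by Remark \ref{remarks sobre producto tensorial}(2) is equivalent to $\M$ as a $\C_H$-module category via the canonical action functor $r_\M$. Applying Theorem \ref{2-equivalencia con graduados}, a $G/H$-graded $\C$-module category is equivalent to the $\C_H$-induction of its trivial component. Therefore
$$\C\boxtimes_{\C_H}\M\;\cong\;\ind_{\C_H}^\C(\C_H\boxtimes_{\C_H}\M)\;\cong\;\ind_{\C_H}^\C(\M)$$
as $\C$-module categories, which is the desired equivalence.

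The main obstacle is verifying carefully that the coset decomposition really produces a grading in the exact sense of the paper's definition, that is, that the balanced tensor product distributes over the direct sum of right $\C_H$-submodules and that the resulting components transform correctly under the left $\C$-action. Once this routine compatibility is granted, the argument is purely formal: write the $(\C,\C_H)$-bimodule $\C$ as a $G/H$-graded sum of right $\C_H$-modules, take $\boxtimes_{\C_H}\M$ termwise, identify the trivial component with $\M$, and invoke the 2-equivalence of Theorem \ref{2-equivalencia con graduados}.
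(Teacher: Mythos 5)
Your proposal is correct and follows essentially the same route as the paper: exhibit the $G/H$-grading $(\C\boxtimes_{\C_H}\M)_{gH}=\C_{gH}\boxtimes_{\C_H}\M$, identify the trivial component with $\M$ via $r_\M$, and apply Theorem \ref{2-equivalencia con graduados}. You merely spell out the coset decomposition and the compatibility of the balanced tensor product with direct sums in more detail than the paper does.
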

\begin{proof}
The $\C$-module category $\C\boxtimes_{\C_H}\M$ is $G/H$-graded
where $(\C\boxtimes_{\C_H}\M)_{gH}= (\C_{gH})\boxtimes_{\C_H}\M$. In
particular $(\C\boxtimes_{\C_H}\M)_{H}= \C_H\boxtimes_{\C_H}\M\cong
\M$ as $\C_H$-module categories, then by Theorem \ref{2-equivalencia
con graduados}, $\C\boxtimes_{\C_H}\M\cong \ind_{\C_H}^\C(\M)$.
\end{proof}

\begin{obs}
Proposition \ref{Prop Inducida como producto tensorial} implies that
$\ind_{\C_H}^\C(\M)$ is a semisimple $\C$-module category. In fact,
by \cite[Proposition 3.5]{ENO3}, the category $\C\boxtimes_{\C_H}\M$
is equivalent to $\Hom_{\C_H}(\C^{op}, \M)$, and by \cite[Theorem
2.16]{ENO}, the category $\Hom_{\C_H}(\C^{op}, \M)$ is semisimple.
\end{obs}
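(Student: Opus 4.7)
The plan is a short chain of references following the route sketched in the remark itself. First, I would apply Proposition \ref{Prop Inducida como producto tensorial} to identify $\ind_{\C_H}^\C(\M)$ with the relative Deligne tensor product $\C\boxtimes_{\C_H}\M$ as $\C$-module categories. This reduces the claim to showing that $\C\boxtimes_{\C_H}\M$ is a semisimple abelian category, since the passage from semisimplicity of the underlying abelian category to semisimplicity as a $\C$-module category is automatic from the complete-reducibility statement of \cite{O1} recalled in the preliminaries.

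Second, I would invoke \cite[Proposition 3.5]{ENO3}, which provides a canonical equivalence of abelian categories
\[\C\boxtimes_{\C_H}\M \;\cong\; \Hom_{\C_H}(\C^{op},\M),\]
where $\C^{op}$ carries the right $\C_H$-module structure obtained by restricting its regular left $\C$-module structure. This converts the question about a relative tensor product into one about a category of module functors.

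Third, I would apply \cite[Theorem 2.16]{ENO}, which asserts that the category of module functors between two finite semisimple module categories over a fusion category is itself finite and semisimple. Both $\C^{op}$ and $\M$ qualify as semisimple $\C_H$-module categories: the former because $\C$ is a fusion category and restriction along the inclusion $\C_H\subset \C$ preserves semisimplicity of the regular module structure, the latter by the standing convention of the paper. Hence $\Hom_{\C_H}(\C^{op},\M)$ is semisimple, and transporting this back through the two equivalences proves the remark.

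The argument is essentially the composition of three known results, so I do not foresee a serious obstacle; the only delicate point is bookkeeping of left/right and opposite-category conventions in the middle step, making sure that the outer $\C$-action on $\C\boxtimes_{\C_H}\M$ (inherited from the leftmost factor) is correctly matched with the $\C$-action on $\Hom_{\C_H}(\C^{op},\M)$ (inherited from the target $\C^{op}$ via post-composition). Once those conventions are pinned down, semisimplicity transfers immediately.
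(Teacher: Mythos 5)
Your proposal follows exactly the same chain of references as the paper's own argument: identify $\ind_{\C_H}^\C(\M)$ with $\C\boxtimes_{\C_H}\M$ via Proposition \ref{Prop Inducida como producto tensorial}, pass to $\Hom_{\C_H}(\C^{op},\M)$ by \cite[Proposition 3.5]{ENO3}, and conclude semisimplicity from \cite[Theorem 2.16]{ENO}. The extra care you take with the left/right and opposite-category conventions is reasonable but does not change the argument, so this is the same proof.
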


Let $\M=\bigoplus_{gH\in G/H}\M_{gH}$ be a $G/H$-graded $\C$-module
category. We define the canonical functor
$\overline{\mu}:\C\boxtimes \M_{H}\to \M, X\boxtimes M\mapsto
X\otimes M$. It is clear that $\overline{\mu}$ is a $\C_H$-balanced
functor, thus $\overline{\mu}$ defines a $\C$-module functor
$\mu:\C\boxtimes_{\C_H}\M_{H}\to \M$.

\begin{prop}\label{equivalencia mu}
The functor $\mu:\C\boxtimes_{\C_H}\M_{H}\to \M$ is a $G/H$-graded equivalence of $\C$-module categories. In particular, each $\mu_{gH}:\C_{gH}\boxtimes_{\C_H}\M_{H}\to \M_{gH}$ is an equivalence of left $\C_H$-module categories.
\end{prop}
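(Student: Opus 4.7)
The strategy is to deduce the proposition directly from Theorem \ref{2-equivalencia con graduados} and Proposition \ref{Prop Inducida como producto tensorial}, after checking that $\mu$ is $G/H$-graded and that its $H$-component is already an equivalence.

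First I would verify the compatibility of $\mu$ with the grading. By definition of a $G/H$-graded $\C$-module category, the bifunctor $\otimes$ sends $\C_{gH}\times\M_H$ into $\M_{gH}$, so $\overline{\mu}$ restricts to a $\C_H$-balanced bifunctor $\C_{gH}\times\M_H\to\M_{gH}$ for each coset $gH$. The universal property of the relative Deligne tensor product therefore produces a functor $\mu_{gH}:\C_{gH}\boxtimes_{\C_H}\M_H\to\M_{gH}$, and $\mu=\bigoplus_{gH\in G/H}\mu_{gH}$. In particular $\mu$ preserves the $G/H$-grading.

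Next I would identify the $H$-component: the functor $\mu_H:\C_H\boxtimes_{\C_H}\M_H\to\M_H$ is precisely the canonical comparison $r_{\M_H}$ of Remark \ref{remarks sobre producto tensorial}(2), obtained by factoring the balanced $\C_H$-action through the relative Deligne product. Hence $\mu_H$ is already an equivalence of $\C_H$-module categories.

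The main step is to propagate this equivalence to the full $G/H$-graded functor. By Proposition \ref{Prop Inducida como producto tensorial} the source $\C\boxtimes_{\C_H}\M_H$ is equivalent to $\ind_{\C_H}^\C(\M_H)$ as a $G/H$-graded $\C$-module category, and the proof of Theorem \ref{2-equivalencia con graduados} shows that $\M$ itself is equivalent to $\ind_{\C_H}^\C(\M_H)$ as well, via the internal Hom construction applied to a nonzero object of $\M_H$. Since Theorem \ref{2-equivalencia con graduados} is a 2-equivalence, a $G/H$-graded $\C$-module functor between $G/H$-graded $\C$-module categories is an equivalence if and only if its $H$-component is an equivalence of $\C_H$-module categories. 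Combined with the identification of $\mu_H$ with $r_{\M_H}$ from the previous step, this yields that $\mu$ is an equivalence, and the ``in particular'' statement follows from the decomposition $\mu=\bigoplus_{gH}\mu_{gH}$.

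The point I expect to require the most care is the implicit identification of $\mu$ with the induction of $\mu_H$ under the equivalence of Theorem \ref{2-equivalencia con graduados}. Realizing $\M_H\cong(\C_H)_A$ for the internal-Hom algebra $A=\underline{\Hom}(M,M)$ of a nonzero $M\in\M_H$, this amounts to checking that $\mu_{gH}$ sends $X\boxtimes_{\C_H}(Y\otimes_A M')$ to $(X\otimes Y)\otimes_A M'$, which matches the formula for induced functors from Proposition \ref{Prop Inducida como producto tensorial}. This is a direct bookkeeping verification rather than a genuine obstacle.
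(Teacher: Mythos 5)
Your proposal is correct and follows essentially the same route as the paper: check that $\mu$ is $G/H$-graded, identify $\mu_H$ with the canonical equivalence $r_{\M_H}$ from Remark \ref{remarks sobre producto tensorial}(2), and then use the 2-equivalence of Theorem \ref{2-equivalencia con graduados} to conclude that $\mu\cong\ind(\mu_H)$ is an equivalence. The extra bookkeeping you flag at the end is the same identification the paper leaves implicit, so there is nothing further to add.
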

\begin{proof}
The functor $\mu$ is a $G/H$-graded functor, and $\mu_{H}=r_\M:\C_H\boxtimes_{\C_H}\M\to \M$ is an equivalence of $\C_H$-module categories (see part 2 of Remark \ref{remarks sobre producto tensorial}). Thus by Theorem \ref{2-equivalencia con graduados}, $\mu\cong \ind(\mu_H)$  as $\C$-module functors, and it is an equivalence.
 \end{proof}

The following result appears in \cite[Theorem 6.1]{ENO3} we provide
an alternate proof using our results.

\begin{corol}\label{mapa picard}
For every  $\sigma, \tau \in G$, the canonical functor
\[\mu_{\sigma,\tau}: \C_\sigma\boxtimes_{\C_e}\C_\tau\to
\C_{\sigma\tau},\] is an equivalence of $\C_e$-bimodule categories.
\end{corol}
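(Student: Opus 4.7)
My plan is to derive the corollary directly from Proposition \ref{equivalencia mu} by re-grading the regular $\C$-module category $\C$ so that $\C_\tau$ sits in the trivial component. Concretely, I would fix $\tau \in G$ and consider $\C$ as a left $\C$-module category with the $G$-grading
\[ \widetilde{\M}_g := \C_{g\tau}, \qquad g \in G. \]
The grading axiom is immediate from the grading of $\C$: one has $\C_\rho \otimes \widetilde{\M}_g = \C_\rho \otimes \C_{g\tau} \subseteq \C_{\rho g\tau} = \widetilde{\M}_{\rho g}$, and the trivial component is $\widetilde{\M}_e = \C_\tau$.

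Taking $H = \{e\}$ in Proposition \ref{equivalencia mu}, I would then conclude that for each $\sigma \in G$ the canonical functor
\[ \mu_\sigma : \C_\sigma \boxtimes_{\C_e} \widetilde{\M}_e \longrightarrow \widetilde{\M}_\sigma, \qquad X \boxtimes Y \mapsto X \otimes Y, \]
is an equivalence of left $\C_e$-module categories. Unwinding the definitions, this functor is exactly $\mu_{\sigma,\tau} : \C_\sigma \boxtimes_{\C_e} \C_\tau \to \C_{\sigma\tau}$, so the left-module part of the statement is established.

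The only remaining point is upgrading from a left $\C_e$-module equivalence to a $\C_e$-bimodule equivalence. I do not expect any genuine difficulty here: both sides carry natural right $\C_e$-actions (on the tensor product via the right $\C_e$-bimodule structure of $\C_\tau$ together with Remark \ref{remarks sobre producto tensorial}(1), and on $\C_{\sigma\tau}$ via tensoring on the right), and since $\mu_{\sigma,\tau}$ is literally the restriction of the tensor product of $\C$, it intertwines these right actions tautologically. The substance of the argument is in choosing the shifted grading; the bimodule compatibility is a formality.
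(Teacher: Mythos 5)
Your argument is essentially the paper's own proof: the paper likewise introduces the regraded module category $\C(\tau)$ with trivial component $\C_\tau$ and applies Proposition \ref{equivalencia mu} with $H=\{e\}$. Your right-shifted grading $\widetilde{\M}_g=\C_{g\tau}$ is in fact the convention compatible with the left $\C$-action for noncommutative $G$ (the paper's proof writes $(\C(\tau))_\sigma=\C_{\tau\sigma}$, which appears to be a slip), and your closing remark on the bimodule upgrade supplies exactly the formality the paper leaves implicit.
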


\begin{proof} Let us consider the graded  $\C$-module category $\C(\tau)$, where $\C=\C(\tau)$ as
$\C$-module categories, but with grading $(\C(\tau))_\sigma= \C_{\tau \sigma}$, for $\tau\in G$.

Since $\C(\tau)_e=\C_\tau$, by Proposition \ref{equivalencia mu}, the canonical functor $\mu_\sigma:\C_\sigma \boxtimes_{\C_e}\C_\tau \to \C(\tau)_\sigma= \C_{\tau\sigma}$ is a
$\C_e$-bimodule category equivalence. But by definition $\mu_\sigma= \mu_{\sigma,\tau}$, so the proof is finished.
\end{proof}

\section{Clifford theory for graded fusion categories}\label{section clifford theory}
In this section we shall denote by $\C$ a fusion category graded by a finite group $G$.

Let $\M$ be a $\C$-module category, and let $\N \subset\M$ be a full abelian subcategory. We shall denote by $\C_\sigma\overline{\otimes} \N $ the full abelian subcategory given by $Ob(\C\overline{\otimes}
\N)$ = $\{$subquotients of $V\otimes N : V \in \C_\sigma, N \in
N\}$. (Recall that a subquotient object is a subobject of a quotient object.)

Let $\M$ be a $\C$-module category and let $\N$ be a $\C_e$-submodule category
of $\M$. The bifunctor $\otimes$ induces a canonical $\C_e$-module functor  $\mu_\sigma: \C_\sigma\boxtimes_{\C_e}\N\to \C_\sigma\overline{\otimes}\N$.
\begin{prop}\label{equivalencia sub}
Let $\M$ be a $\C$-module category and let $\N$ be an indecomposable $\C_e$-submodule category
of $\M$. Then the canonical $\C_e$-module functor $\mu_\sigma: \C_\sigma\boxtimes_{\C_e} \N \to \C_\sigma\overline{\otimes}
\N$, is an equivalence of  $\C_e$-module categories, for all $\sigma\in G$.
\end{prop}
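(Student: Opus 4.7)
Proof plan:

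The plan is to represent $\N$ as modules over an algebra in $\C_e$ and reduce the claim to an internal-$\Hom$ computation in $\M$. Fix a nonzero $M_0\in\N$ and set $A=\underline{\Hom}_\N(M_0,M_0)\in\C_e$, so that the equivalence $(\C_e)_A\simeq\N$, $U\mapsto U\otimes_A M_0$, holds. Let $B=\underline{\Hom}_\M(M_0,M_0)\in\C$, decomposed according to the grading as $B=\bigoplus_{\tau\in G}B_\tau$ with $B_\tau\in\C_\tau$. Because $\N\subset\M$ is a full $\C_e$-submodule subcategory, for every $X\in\C_e$ one has $X\otimes M_0\in\N$ and $\Hom_\M(X\otimes M_0,M_0)=\Hom_\N(X\otimes M_0,M_0)$; a Yoneda argument then gives $B_e=A$ as algebras in $\C_e$. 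By Proposition \ref{Prop Inducida como producto tensorial} there is a $G$-graded $\C$-module equivalence $\C\boxtimes_{\C_e}\N\simeq\C_A$; its degree-$\sigma$ piece yields a $\C_e$-module equivalence $\C_\sigma\boxtimes_{\C_e}\N\simeq(\C_\sigma)_A$ under which $\mu_\sigma$ corresponds to the functor $F:(\C_\sigma)_A\to\C_\sigma\overline{\otimes}\N$, $V\mapsto V\otimes_A M_0$.

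Full faithfulness of $F$ reduces to an internal-$\Hom$ computation. Rigidity of $\C$ and the adjunction with $(-)\otimes M_0$ give $\underline{\Hom}_\M(M_0,X\otimes M_0)=X\otimes B$ for every $X\in\C$. Since $W\otimes_A M_0$ is a split quotient of $W\otimes M_0$ in the semisimple $\M$, applying $\underline{\Hom}_\M(M_0,-)$ yields $\underline{\Hom}_\M(M_0,W\otimes_A M_0)=W\otimes_A B$, where $B$ is viewed as a left $A$-module through the inclusion $A=B_e\hookrightarrow B$. The adjunction, restricted to $A$-balanced morphisms out of the $V$-factor, then produces
\[
\Hom_\M(V\otimes_A M_0,\,W\otimes_A M_0)\;=\;\Hom_{\C_A}(V,\,W\otimes_A B).
\]
Because $V\in\C_\sigma$ and $W\otimes_A B_\tau\in\C_{\sigma\tau}$, the $G$-grading kills every summand except $\tau=e$, where $W\otimes_A B_e\cong W$ in $(\C_\sigma)_A$; what remains is exactly $\Hom_{(\C_\sigma)_A}(V,W)$. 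For essential surjectivity, any simple object of $\C_\sigma\overline{\otimes}\N$ is, in the semisimple category $\M$, a direct summand of some $X\otimes N$ with $X\in\C_\sigma$ and $N\in\N$; writing $N\cong U\otimes_A M_0$ with $U\in(\C_e)_A$, we obtain $X\otimes N=F(X\otimes U)$ where $X\otimes U\in(\C_\sigma)_A$, and full faithfulness of $F$ ensures each simple summand arises as the image of a simple summand of $X\otimes U$. The $\C_e$-module functor structure on $F$ is transparent from the construction.

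The main obstacle I anticipate is the identity $\underline{\Hom}_\M(M_0,W\otimes_A M_0)=W\otimes_A B$: one must verify that $\underline{\Hom}_\M(M_0,-)$ preserves the coequalizer defining $W\otimes_A M_0$ (which follows from the coequalizer being split in semisimple $\M$, so that any additive functor preserves it) and that the resulting arrows $W\otimes A\otimes B\rightrightarrows W\otimes B$ in $\C$ correctly realize the right $A$-action on $W$ and the left $A$-action on $B$ through $A=B_e\hookrightarrow B$. Once this is secured, the grading-based cancellation in the second paragraph proceeds mechanically.
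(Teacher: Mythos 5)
Your proof is correct, but it takes a genuinely different route from the paper's. The paper assembles the subcategories $\C_\tau\overline{\otimes}\N$ into an auxiliary $G$-graded $\C$-module category $gr-\N=\bigoplus_{\tau\in G}\C_\tau\overline{\otimes}\N$ (indecomposability of $\N$ entering only to identify the degree-$e$ component with $\N$) and then invokes Proposition \ref{equivalencia mu}: the canonical functor $\C\boxtimes_{\C_e}\N\to gr-\N$ is a graded equivalence, and $\mu_\sigma$ is its degree-$\sigma$ piece; all the work is thus delegated to the 2-equivalence of Theorem \ref{2-equivalencia con graduados}. You instead argue directly inside $\M$: you realize $\N=(\C_e)_A$, compare $A$ with the full internal End $B=\underline{\Hom}_\M(M_0,M_0)=\bigoplus_\tau B_\tau$ (which, unlike in the graded situation of Theorem \ref{2-equivalencia con graduados}, need not be concentrated in degree $e$), identify $B_e=A$ by fullness of $\N$, and get full faithfulness of $V\mapsto V\otimes_A M_0$ from the grading computation $\Hom_{\C_A}(V,W\otimes_A B)=\Hom_{\C_A}(V,W\otimes_A B_e)=\Hom_{(\C_\sigma)_A}(V,W)$ for $V,W\in\C_\sigma$, with essential surjectivity supplied by semisimplicity. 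The two points you flag as delicate --- exactness of $\underline{\Hom}_\M(M_0,-)$ on the split coequalizer, and the identification of the induced $A$-actions through $A=B_e\hookrightarrow B$ --- are indeed the places needing care, and both go through. What your argument buys is a self-contained, morphism-level verification that does not presuppose the graded 2-equivalence machinery; what the paper's buys is brevity and the stronger packaging of the whole family $\{\mu_\tau\}_{\tau\in G}$ as a single equivalence of graded module categories, which is reused later (e.g.\ in the proof of Theorem \ref{theorem clifford}).
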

\begin{proof}
Define a $G$-graded $\C$-module category by $gr-\N = \bigoplus_{\sigma\in G} \C_\sigma\overline{\otimes}\N$, with action
\begin{align*}
\otimes: \C_\sigma\times \C_h\overline{\otimes}\N &\to \C_{\sigma h}\overline{\otimes}\N\\
V_\sigma\times T &\mapsto V_\sigma\otimes T.
\end{align*}
Since $\N$ is indecomposable,  $\C_e\overline{\otimes}\N = \N$ as a
$\C_e$-module category, so by  Proposition \ref{equivalencia mu}
the canonical functor $\mu : \C\boxtimes_{\C_e}\N \to gr-\N$ is a
category equivalence of $G$-graded $\C$-module categories and the
restriction $\mu_\sigma: \C_\sigma\boxtimes_{\C_e} \N \to
\C_\sigma\overline{\otimes}\N$ is a $\C_e$-module category
equivalence.
\end{proof}

Given a $\C$-module category $\M$, we shall denote by
$\Omega_{\C_e}(M)$ the set of equivalence classes of indecomposable
$\C_e$-submodule categories of $\M$.
\begin{corol}\label{G-conjuto} Let $\M$ be a $\C$-module category. The group $G$ acts on $\Omega_{\C_e}(\M)$ by
\begin{align*}
    G\times \Omega_{\C_e}(\M)&\to \Omega_{\C_e}(\M)\\
    (g,[\N]) &\mapsto [\C_\sigma\boxtimes_{\C_e}\N]
\end{align*} \end{corol}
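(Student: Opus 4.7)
The plan is to verify that the proposed formula gives a well-defined set map that is both well-defined on equivalence classes and satisfies the two group action axioms. Proposition \ref{equivalencia sub} is the main tool: it lets us realize the abstract tensor product $\C_\sigma\boxtimes_{\C_e}\N$ concretely as the full abelian subcategory $\C_\sigma\overline{\otimes}\N\subset\M$.

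First I would check that $[\C_\sigma\boxtimes_{\C_e}\N]$ really is an element of $\Omega_{\C_e}(\M)$. The equivalence $\mu_\sigma:\C_\sigma\boxtimes_{\C_e}\N\to\C_\sigma\overline{\otimes}\N$ of Proposition \ref{equivalencia sub} identifies the abstract object with a full abelian subcategory of $\M$; this subcategory is $\C_e$-stable because $\C_e\otimes(\C_\sigma\overline{\otimes}\N)\subset\C_{e\sigma}\overline{\otimes}\N=\C_\sigma\overline{\otimes}\N$ and subquotients are preserved. For indecomposability, I would use Corollary \ref{mapa picard}, the associativity of $\boxtimes_{\C_e}$ (Remark \ref{remarks sobre producto tensorial}(3)), and the $\C_e$-module equivalence $\C_e\boxtimes_{\C_e}\N\cong\N$ (Remark \ref{remarks sobre producto tensorial}(2)) to obtain
\[\C_{\sigma^{-1}}\boxtimes_{\C_e}(\C_\sigma\boxtimes_{\C_e}\N)\cong(\C_{\sigma^{-1}}\boxtimes_{\C_e}\C_\sigma)\boxtimes_{\C_e}\N\cong\C_e\boxtimes_{\C_e}\N\cong\N.\]
Thus if $\C_\sigma\boxtimes_{\C_e}\N$ decomposed as $\M_1\oplus\M_2$, the same operation would split $\N$; since $\N$ is indecomposable one summand would vanish, and applying $\C_\sigma\boxtimes_{\C_e}(-)$ shows the corresponding $\M_i$ itself was zero.

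Next I would check well-definedness on equivalence classes: a $\C_e$-module equivalence $\N\simeq\N'$ induces an equivalence $\C_\sigma\boxtimes_{\C_e}\N\simeq\C_\sigma\boxtimes_{\C_e}\N'$ by functoriality of $\boxtimes_{\C_e}$ in its second argument. The two action axioms then follow from the same associativity isomorphism and Corollary \ref{mapa picard}: the identity axiom $[\C_e\boxtimes_{\C_e}\N]=[\N]$ is exactly Remark \ref{remarks sobre producto tensorial}(2), and for $\sigma,\tau\in G$ one computes
\[\C_\sigma\boxtimes_{\C_e}(\C_\tau\boxtimes_{\C_e}\N)\cong(\C_\sigma\boxtimes_{\C_e}\C_\tau)\boxtimes_{\C_e}\N\cong\C_{\sigma\tau}\boxtimes_{\C_e}\N,\]
all as $\C_e$-module equivalences, which gives $(\sigma\tau)\cdot[\N]=\sigma\cdot(\tau\cdot[\N])$.

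The only non-formal step is the indecomposability argument, and even that reduces cleanly to the fact that the $\C_e$-bimodule categories $\C_\sigma$ are invertible with inverse $\C_{\sigma^{-1}}$, provided by Corollary \ref{mapa picard}; everything else is a direct application of the functoriality and associativity properties of $\boxtimes_{\C_e}$ recorded in Remark \ref{remarks sobre producto tensorial}.
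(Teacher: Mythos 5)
Your proposal is correct and its central step coincides with the paper's proof: both use Proposition \ref{equivalencia sub} to identify $\C_\sigma\boxtimes_{\C_e}\N$ with the $\C_e$-submodule category $\C_\sigma\overline{\otimes}\N$ of $\M$. The paper's proof stops there, leaving the indecomposability of the image, the well-definedness on equivalence classes, and the action axioms implicit; your additional verifications of these points (via Corollary \ref{mapa picard}, associativity of $\boxtimes_{\C_e}$, and the invertibility of the bimodule categories $\C_\sigma$) are sound and make the argument complete.
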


\begin{proof}

Let $\N$ be an indecomposable  $\C_e$-submodule category of $\M$. By Proposition \ref{equivalencia sub} the
functor \begin{align*}
    \mu_\sigma: \C_\sigma\boxtimes_{\C_e} \N&\to \C_\sigma\overline{\otimes} \N
\end{align*}is a $\C_e$-module category equivalence, so  $\C_\sigma\boxtimes_{\C_e} \N$ is equivalent
to a  $\C_e$-submodule category of $\M$. \end{proof} Let $\M$ be an
abelian category and let  $\N, \N'$  be full abelian subcategories
of $\M$, we shall denote by $\N +\N'$ the full abelian subcategory
of $\M$ where $Ob(\N +\N')=\{ \text{subquotients  of } N \oplus N'
:N\in \N, N'\in \N' \}$. It will be called the sum category of $\N$
and $\N'$. \smallbreak Now, we are ready to prove our first main
result:
\begin{proof}[Proof of Theorem \ref{theorem clifford}]
(1) \  Let $\N$ be an indecomposable $\C_e$-submodule category of $\M$, the canonical functor
\begin{align*}
    \mu: \C\boxtimes_{\C_e}\N \to \M
\end{align*} is a $\C$-module functor and $\mu=\oplus_{\sigma\in G} \mu_\sigma$, where
$\mu_{\sigma}=\mu|_{\C_\sigma\boxtimes_{\C_e}\N}.$ By  Proposition \ref{equivalencia sub} each $\mu_\sigma$ is  a
$\C_e$-module category equivalence with  $\C_\sigma\overline{\otimes}\N$.

Since $\M$ is indecomposable, every object  $M\in \M$ is isomorphic to some subquotient of $\mu(X)$ for some object $X\in
\C\boxtimes_{\C_e}\N$. Then   $\M=\sum_{\sigma\in G}\C_\sigma\overline{\otimes} \N$, and each
$\C_\sigma\overline{\otimes} \N$ is an indecomposable  $\C_e$-submodule category.

Let $S, S'$ be indecomposable $\C_e$-submodule categories of $\M$. Then there exist  $\sigma, \tau \in G$ such that $\C_\sigma\boxtimes_{\C_e}\N\cong S$, $\C_\tau\boxtimes_{\C_e}\N\cong S'$, and by
Corollary \ref{mapa picard}, $S'\cong \C_{\tau\sigma^{-1}}\boxtimes_{\C_e}S$. So the action is
transitive. \bigbreak

(2) Let  $H=\st([\N])$ be the stabilizer subgroup of  $[\N]\in \Omega_{\C_e}(\M)$ and
\[\M_\N=\sum_{h\in H} \C_h\overline{\otimes} \N.\]

Since  $H$ acts transitively on $\Omega_{\C_e}(\M_\N)$, the $\C_H$-module category $\M_\N$ is indecomposable.
Let $\Sigma= \{e,\sigma_1,\ldots,\sigma_n\}$ be a set of representatives of the cosets of  $G$ modulo $H$. The
map $\phi: G/H\to \Omega_{\C_H}(\M)$, $\phi(\sigma H)= [\C_\sigma\overline{\otimes}\M_\N]$ is an
isomorphism of $G$-sets. Then $\M$ has a structure of $G/H$-graded $\C$-module category, where $\M=$
$\oplus_{\sigma \in \Sigma}\C_\sigma\overline{\otimes} \M_\N$. By Theorem \ref{2-equivalencia con graduados}, $\M\cong \C\boxtimes_{\C_H}\M_\N$ as $\C$-module categories.
\end{proof}

\begin{defin}
Let $\C$ be a $G$-graded fusion category. If $(\M,\otimes)$ is a
$\C_e$-module category, then a $\C$-extension of $\M$ is a
$\C$-module category $(\M,\odot)$ such that $(\M,\otimes)$ is
obtained by restriction to $\C_e$.
\end{defin}

\begin{corol}\label{corol main result}
Let $\M$ be an indecomposable $\C$-category, and $\N$ an
indecomposable $\C_e$-module category. Then there exists a subgroup
$S\subset G$, and a $\C_S$-extension $(\N,\odot)$ of $\N$, such that
$\M\cong \C\boxtimes_{\C_S}\N$ as $\C$-module categories.
\end{corol}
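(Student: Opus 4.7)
The plan is to invoke Theorem \ref{theorem clifford} to reduce $\M$ to its inertia module $\M_{\N_0}$, and then read off both the subgroup $S$ and the extension $(\N,\odot)$ from the decomposition of $\M_{\N_0}$ into $\C_e$-indecomposables. Concretely, I pick an indecomposable $\C_e$-submodule $\N_0\subset\M$ and set $H=\st([\N_0])$; Theorem \ref{theorem clifford} together with Proposition \ref{Prop Inducida como producto tensorial} give $\M\cong\ind_{\C_H}^{\C}(\M_{\N_0})\cong\C\boxtimes_{\C_H}\M_{\N_0}$, so by the associativity in Remark \ref{remarks sobre producto tensorial}(3) it will suffice to produce an isomorphism $\M_{\N_0}\cong\C_H\boxtimes_{\C_S}\N$ for some subgroup $S\subset H$ and some $\C_S$-extension $\N$ of an indecomposable $\C_e$-module.

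Since $H$ fixes the class $[\N_0]$, Proposition \ref{equivalencia sub} ensures that every indecomposable $\C_e$-submodule of $\M_{\N_0}$ is equivalent as a $\C_e$-module to $\N_0$, although several distinct copies may coexist inside $\M_{\N_0}$. I fix one such indecomposable $\C_e$-subcategory $\N\subset\M_{\N_0}$ and define the setwise stabilizer $S=\{h\in H:\C_h\overline{\otimes}\N=\N\}$, which is a subgroup of $H$ (closure under products and inverses being immediate from Corollary \ref{mapa picard}). Restricting the $\C_H$-action equips $\N$ with a $\C_S$-module structure extending its $\C_e$-action, producing the required extension $(\N,\odot)$; because $\N$ is $\C_e$-indecomposable it is automatically $\C_S$-indecomposable.

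It remains to realise $\M_{\N_0}$ as an induction from $\C_S$ to $\C_H$. Using Corollary \ref{mapa picard} to rewrite $\C_{h'}\overline{\otimes}(\C_h\overline{\otimes}\N)$ as $\C_{h'h}\overline{\otimes}\N$, the assignment $hS\mapsto\C_h\overline{\otimes}\N$ is well defined and injective on cosets; semisimplicity of $\C_e$-module categories makes distinct indecomposable $\C_e$-subcategories orthogonal, so $\bigoplus_{hS\in H/S}\C_h\overline{\otimes}\N$ is a genuine direct sum and a $\C_H$-submodule of $\M_{\N_0}$. The indecomposability of $\M_{\N_0}$ as a $\C_H$-module then forces this sum to equal $\M_{\N_0}$, endowing it with an $H/S$-graded $\C_H$-module structure whose trivial component is $\N$. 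Theorem \ref{2-equivalencia con graduados} combined with Proposition \ref{Prop Inducida como producto tensorial} yields $\M_{\N_0}\cong\ind_{\C_S}^{\C_H}(\N)\cong\C_H\boxtimes_{\C_S}\N$, completing the argument.

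The main obstacle I anticipate is verifying the direct sum decomposition $\M_{\N_0}=\bigoplus_{hS\in H/S}\C_h\overline{\otimes}\N$, which requires showing not just that the subcategories attached to different cosets are distinct but that they are pairwise orthogonal and jointly exhaust $\M_{\N_0}$. This amounts to combining complete reducibility of $\C_e$-module categories with the observation that the $\C_H$-action permutes the indecomposable $\C_e$-isotypic components of $\M_{\N_0}$ among themselves, which identifies the $H/S$-orbit of $\N$ with the full set of such components.
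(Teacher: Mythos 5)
Your argument is correct and follows essentially the same route as the paper: both hinge on identifying the setwise stabilizer $S=\{\sigma\in G\,|\,\C_\sigma\overline{\otimes}\N=\N\}$, organizing the distinct subcategories $\C_\sigma\overline{\otimes}\N$ into a coset-graded decomposition of the module category, and invoking Theorem \ref{2-equivalencia con graduados} to recognize it as an induced (equivalently, relative tensor product) module category. The only difference is organizational: you first pass to $H=\st([\N_0])$ and $\M_{\N_0}$ via Theorem \ref{theorem clifford}(2) and then reassemble with the associativity of $\boxtimes$, whereas the paper works with all of $\M$ at once, defining $X=G/\!\sim$ and identifying it with $G/S$ in a single step.
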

\begin{proof}
Let $\N\subset \M$ be an indecomposable $\C_e$-submodule category.
By the proof of part (1) of Theorem \ref{theorem clifford}, we have
that $\M=\sum_{\sigma\in G}\C_\sigma\overline{\otimes}\N$, where
each $\C_\sigma\overline{\otimes}\N$ is an indecomposable
$\C_e$-module subcategory of $\M$. Let $X=G/\sim$, where $\sigma\sim
\tau$ if and only if $\C_\sigma\overline{\otimes}\N
=\C_{\tau}\overline{\otimes}\N$ for $\sigma, \tau \in G$. Then $\M=
\bigoplus_{x\in X}\C_x$ as a direct sum of $\C_e$-module categories,
and $\M$ is an $X$-graded $\C$-module category, where $X$ has the
following $G$-action: \[\sigma y=x  \text{ if and only if }
\C_\sigma\overline{\otimes}\C_y\overline{\otimes}\N
=\C_x\overline{\otimes}\N,\] for $\sigma \in G, x,y \in X$.

Again by part (1) of Theorem \ref{theorem clifford}, $X$ is a transitive $G$-set. Then by Theorem \ref{2-equivalencia con graduados}, $\M\cong \C\boxtimes_{S}\N$ as $\C$-module categories, where $S=\{\sigma\in G| \C_\sigma\overline{\otimes}\N =\N\}$.
\end{proof}

\begin{obs}\label{obs iso de g-set}
Following with the notations in the proof of Corollary \ref{corol main result}, if $\M$ is an indecomposable $\C$-module category,  the set of indecomposable $\C_e$-module subcategories of $\M$ is a transitive $G$-set isomorphic to $G/S$.
\end{obs}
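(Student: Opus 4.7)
The plan is to obtain this identification as a direct application of the orbit--stabilizer principle, once the correct $G$-action is set up, using only results already in hand. By Corollary \ref{G-conjuto}, the set $\Omega_{\C_e}(\M)$ of equivalence classes of indecomposable $\C_e$-submodule categories of $\M$ carries the $G$-action
\[
\sigma\cdot[\N']=[\C_\sigma\boxtimes_{\C_e}\N'],
\]
and by Proposition \ref{equivalencia sub} the canonical functor $\mu_\sigma:\C_\sigma\boxtimes_{\C_e}\N\to\C_\sigma\overline{\otimes}\N$ is an equivalence of $\C_e$-module categories, so this abstract orbit class is represented concretely by the submodule category $\C_\sigma\overline{\otimes}\N\subset\M$. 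Thus the set $X=G/{\sim}$ introduced in the proof of Corollary \ref{corol main result} is precisely the orbit of $[\N]$ inside $\Omega_{\C_e}(\M)$.

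Next I would invoke part (1) of Theorem \ref{theorem clifford}, which asserts that this $G$-action is transitive. Hence $X=\Omega_{\C_e}(\M)$ as $G$-sets. It remains to identify the stabilizer of $[\N]$ with the subgroup $S=\{\sigma\in G:\C_\sigma\overline{\otimes}\N=\N\}$ of the proof of Corollary \ref{corol main result}. The chain of equivalences
\[
\sigma\in\mathrm{Stab}_G([\N])\iff[\C_\sigma\boxtimes_{\C_e}\N]=[\N]\iff[\C_\sigma\overline{\otimes}\N]=[\N]
\]
(the second using Proposition \ref{equivalencia sub}) shows precisely that $\mathrm{Stab}_G([\N])=S$. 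The orbit--stabilizer principle then yields the desired $G$-equivariant bijection $G/S\to\Omega_{\C_e}(\M)$, $\sigma S\mapsto[\C_\sigma\overline{\otimes}\N]$.

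I do not expect any substantive obstacle: the result is essentially a repackaging of Theorem \ref{theorem clifford}(1) together with Proposition \ref{equivalencia sub}, plus the elementary orbit--stabilizer correspondence. The only care required is bookkeeping between the equivalence classes that form $\Omega_{\C_e}(\M)$ and the concrete representative submodule subcategories $\C_\sigma\overline{\otimes}\N\subset\M$ used to define the relation $\sim$ and the subgroup $S$; once these are aligned via Proposition \ref{equivalencia sub}, the identification $\Omega_{\C_e}(\M)\cong G/S$ is automatic.
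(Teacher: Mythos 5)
There is a genuine gap: you conflate the set of \emph{actual} indecomposable $\C_e$-module subcategories of $\M$ (which is what the remark is about, and what the relation $\sim$ and the subgroup $S=\{\sigma\in G\mid \C_\sigma\overline{\otimes}\N=\N\}$ in the proof of Corollary \ref{corol main result} refer to, with \emph{equality} of subcategories of $\M$) with the set $\Omega_{\C_e}(\M)$ of their \emph{equivalence classes} as abstract $\C_e$-module categories. Your claim ``$X=\Omega_{\C_e}(\M)$ as $G$-sets'' is false in general: two distinct subcategories $\C_\sigma\overline{\otimes}\N\neq\C_\tau\overline{\otimes}\N$ of $\M$ can perfectly well be equivalent as $\C_e$-module categories. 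Concretely, for $\C=\M=\mathrm{Vec}_G$ with $\C_e=\mathrm{Vec}$, the set $\Omega_{\C_e}(\M)$ is a single point, whereas the set of indecomposable $\mathrm{Vec}$-submodule categories of $\mathrm{Vec}_G$ has $|G|$ elements and is the $G$-set $G/\{e\}$. Accordingly, your chain of equivalences computes the stabilizer of $[\N]$ in $\Omega_{\C_e}(\M)$, which is $H=\st([\N])=\{\sigma\mid \C_\sigma\overline{\otimes}\N\simeq\N\}$, not $S$; the last step ``$[\C_\sigma\overline{\otimes}\N]=[\N]\iff\sigma\in S$'' silently replaces equivalence by equality. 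In general $S\subseteq H$ with strict inclusion possible, so orbit--stabilizer applied to $\Omega_{\C_e}(\M)$ yields $G/H$, not the asserted $G/S$.

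The intended argument stays entirely with the concrete subcategories. From the proof of Corollary \ref{corol main result} one has $\M=\bigoplus_{x\in X}\C_x\overline{\otimes}\N$ as $\C_e$-module categories with each summand indecomposable, so by uniqueness of the decomposition into indecomposables \emph{every} indecomposable $\C_e$-module subcategory of $\M$ equals some $\C_x\overline{\otimes}\N$; thus the set in question is $X$. The action $\sigma\cdot(\C_\tau\overline{\otimes}\N)=\C_{\sigma\tau}\overline{\otimes}\N$ is visibly transitive on $X$ (this is where Theorem \ref{theorem clifford}(1) and Corollary \ref{mapa picard} enter), and the stabilizer of the base point $\N$ is exactly $S$ \emph{by definition of $S$}, since membership is tested by equality of subcategories. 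Orbit--stabilizer then gives $X\cong G/S$. So your overall strategy (orbit--stabilizer) is the right one, but it must be applied to $X$ rather than to $\Omega_{\C_e}(\M)$.
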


\begin{prop}\label{prop equivalent inducidas}
Let $H,H'\subset G$ be subgroups, and $(\N,\odot)$, $(\N',\odot')$
be a $\C_H$-extension and a $\C_{H'}$-extension of the
indecomposable $\C_e$-module categories $\N$ and $\N'$,
respectively. Then $\C\boxtimes_{\C_{H'}}\N' \cong
\C\boxtimes_{\C_H}\N$ if and only if there exists $\sigma \in G$
such that $H=\sigma H'\sigma^{-1}$ and $\C_{\sigma
H'}\boxtimes_{\C_{ H'}}\N'\cong \N$ as $\C_H$-module categories.
\end{prop}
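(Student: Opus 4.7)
The plan is to treat both directions with $\M:=\C\boxtimes_{\C_H}\N\cong \ind_{\C_H}^\C(\N)$ (Proposition \ref{Prop Inducida como producto tensorial}) as the central object: under the hypothesis (or conclusion) of equivalence, $\M$ admits two different presentations as an induced module category and hence carries two different gradings.

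For the ``if'' direction, assume $H=\sigma H'\sigma^{-1}$ and $\N\cong \C_{\sigma H'}\boxtimes_{\C_{H'}}\N'$ as $\C_H$-module categories. The coset identity $H\sigma H'=\sigma H'H'=\sigma H'$ makes $\C_{\sigma H'}$ a $(\C_H,\C_{H'})$-bimodule category. The key auxiliary step is the bimodule equivalence $\C\boxtimes_{\C_H}\C_{\sigma H'}\cong\C$ of $(\C,\C_{H'})$-bimodule categories. I would prove it by decomposing $\C=\bigoplus_{\tau H\in G/H}\C_{\tau H}$ as a right $\C_H$-module and combining Corollary \ref{mapa picard} with associativity of $\boxtimes$ (Remark \ref{remarks sobre producto tensorial}) to obtain
\[ \C_{\tau H}\boxtimes_{\C_H}\C_{\sigma H'}\ \cong\ \C_\tau\boxtimes_{\C_e}\C_{\sigma H'}\ \cong\ \C_{\tau\sigma H'}. \]
Because $\sigma H'\sigma^{-1}=H$, the assignment $\tau H\mapsto \tau\sigma H'$ is a bijection $G/H\to G/H'$, and summing yields $\C\boxtimes_{\C_H}\C_{\sigma H'}\cong\bigoplus_{\rho H'\in G/H'}\C_{\rho H'}=\C$. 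One more application of associativity gives $\C\boxtimes_{\C_H}\N\cong(\C\boxtimes_{\C_H}\C_{\sigma H'})\boxtimes_{\C_{H'}}\N'\cong \C\boxtimes_{\C_{H'}}\N'$.

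For the ``only if'' direction, set $\M:=\C\boxtimes_{\C_H}\N\cong\C\boxtimes_{\C_{H'}}\N'$. Each presentation exhibits $\M$ as a graded $\C$-module category (Theorem \ref{2-equivalencia con graduados}): $G/H$-graded with trivial component $\N$, and $G/H'$-graded with trivial component $\N'$. Since $\N$ is indecomposable as a $\C_e$-module, the argument in the proof of Theorem \ref{theorem clifford}(1) shows $\M=\sum_{\tau\in G}\C_\tau\overline{\otimes}\N$ is $\C$-indecomposable. Theorem \ref{theorem clifford}(1) then produces $\sigma\in G$ with $[\N]=\sigma[\N']\in\Omega_{\C_e}(\M)$. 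Next I compute $\mathrm{Stab}([\N])$: for $\tau\in H$, the $\C_H$-extension of $\N$ gives $\C_\tau\overline{\otimes}\N\subset\N$, and $\C_\tau\overline{\otimes}\N$ is an indecomposable $\C_e$-submodule by Proposition \ref{equivalencia sub}, hence equal to $\N$; for $\tau\notin H$, $\C_\tau\overline{\otimes}\N\subset\M_{\tau H}$ is disjoint from $\N\subset\M_{eH}$. Thus $\mathrm{Stab}([\N])=H$ and, symmetrically, $\mathrm{Stab}([\N'])=H'$, so the general conjugation formula $\mathrm{Stab}(\sigma[\N'])=\sigma H'\sigma^{-1}$ forces $H=\sigma H'\sigma^{-1}$.

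To identify $\N$ with $\C_{\sigma H'}\boxtimes_{\C_{H'}}\N'$ as $\C_H$-modules, I would apply Proposition \ref{equivalencia mu} to the $G/H'$-grading of $\M$: the canonical $\C$-module equivalence $\mu:\C\boxtimes_{\C_{H'}}\N'\to\M$ restricts on the $(\sigma H')$-component to an equivalence $\mu_{\sigma H'}:\C_{\sigma H'}\boxtimes_{\C_{H'}}\N'\to\M_{\sigma H'}$. The identity $H\sigma H'=\sigma H'$ makes $\C_H$ act on both sides (via left multiplication on $\C_{\sigma H'}$ on the left, via the ambient $\C$-action on $\M$ on the right), and $\C$-equivariance of $\mu$ promotes $\mu_{\sigma H'}$ to a $\C_H$-equivalence. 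Finally, $\M_{\sigma H'}=\sum_{h'\in H'}\C_{\sigma h'}\overline{\otimes}\N'$ and each summand represents the class $(\sigma h')[\N']=\sigma[\N']=[\N]$, so $\M_{\sigma H'}$ coincides with $\N$ as a subcategory of $\M$, its $\C_H$-action agreeing with the one coming from the $G/H$-grading since both are restrictions of the ambient $\C$-action. The main delicacy is precisely the tracking of the $\C_H$-structures (rather than only the $\C_e$-structures); it is handled once one observes that two indecomposable $\C_e$-submodule categories of $\M$ sharing a common simple object coincide as subcategories, so the two a priori different $\C_H$-actions on $\N=\M_{\sigma H'}$ are automatically equal.
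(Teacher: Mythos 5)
Your ``if'' direction is correct, and it actually supplies something the paper leaves implicit (the paper's own proof only writes out the ``only if'' implication): the bimodule equivalence $\C\boxtimes_{\C_H}\C_{\sigma H'}\cong\C$, assembled from Corollary \ref{mapa picard}, the coset bijection $\tau H\mapsto\tau\sigma H'$, and associativity of $\boxtimes$, is a clean way to produce the required $\C$-module equivalence.

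The ``only if'' direction, however, has a genuine gap. You compute stabilizers inside $\Omega_{\C_e}(\M)$, the set of \emph{equivalence classes} of indecomposable $\C_e$-submodule categories, and assert $\mathrm{Stab}([\N])=H$. Your argument for $\tau\notin H$ only shows that $\C_\tau\overline{\otimes}\N$ and $\N$ are \emph{distinct} subcategories of $\M$ (they sit in different $G/H$-components); it does not show they are \emph{inequivalent} as $\C_e$-module categories, so it only yields $H\subseteq\mathrm{Stab}([\N])$, and this inclusion can be strict. Concretely, for $\C=\mathrm{Vec}_G$, $H=\{e\}$, $\N=\mathrm{Vec}$, the category $\M=\mathrm{Vec}_G$ is indecomposable and every indecomposable $\C_e$-submodule category is equivalent to $\mathrm{Vec}$, so $\mathrm{Stab}([\N])=G\neq H$; the conjugation formula for stabilizers of equivalence classes therefore cannot deliver $H=\sigma H'\sigma^{-1}$. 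The paper instead works with the $G$-set of \emph{actual} indecomposable $\C_e$-submodule categories of $\M$ (Remark \ref{obs iso de g-set}), on which the stabilizer of $\N$ is exactly $H$ by precisely your disjointness argument, so that this $G$-set is isomorphic to $G/H$ and, transported through the equivalence $F$, to $G/H'$. The same repair is needed at the end of your argument: knowing that each $\C_{\sigma h'}\overline{\otimes}\N'$ is \emph{equivalent} to $\N$ does not show that $\M_{\sigma H'}$ \emph{equals} $\N$ as a subcategory (you never exhibit a common simple object). One should instead choose $\sigma$ with $\C_\sigma\overline{\otimes}\N'=\N$ on the nose, which transitivity of the $G$-action on actual subcategories provides; with that choice your $\C_H$-equivariance argument for $\mu_{\sigma H'}$ does go through.
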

\begin{proof}
Let $F:\C\boxtimes_{\C_{H'}}\N' \to \C\boxtimes_{\C_H}\N$ be an
equivalence of $\C$-module categories. The functor $F$ maps
indecomposable $\C_e$-module subcategories of
$\C\boxtimes_{\C_{H'}}\N'$ to indecomposable $\C_e$-module
subcategories of $\C\boxtimes_{\C_H}\N$, and $F(X\otimes V)\cong
X\otimes F(V)$ for all $X\in \C$, $X\in \C\boxtimes_{\C_{H'}}\N'$,
thus  (see Remark \ref{obs iso de g-set}) $F$ defines a $G$-set
isomorphism from $G/H$ to $G/H'$. Recall that  $G/H$ and $G/H'$ are
isomorphic if and only if there exists $\sigma \in G$ such that
$H=\sigma H'\sigma^{-1}$, and the $G$-isomorphism is defined by
$\sigma H'\mapsto H$. Then, by restriction, the functor $F$ defines
an equivalence of $\C_H$-module categories $F:\C_{\sigma
H'}\boxtimes_{\C_{ H'}}\N'\to \C_{H}\boxtimes_{\C_H}\N\cong \N$.
\end{proof}
\section{Invariant and $G$-graded module categories}\label{Section invariant y G graduado}

In this section we shall denote by $\C$ a $G$-graded fusion category.

\begin{defin}
A $\C_e$-module category $\M$ is called $G$-invariant if $\C_\sigma\boxtimes_{\C_e}\M$ is equivalent to $\M$ as $\C_e$-module categories, for all $\sigma\in G$.
\end{defin}

By Corollary \ref{corol main result}, every $\C$-module category is
equivalent to an induced module category of an $S$-invariant
$\C_S$-module category for some subgroup $S\subset G$. \smallbreak
Let $\C$ be a $G$-graded tensor category. For a $G$-graded
$\C$-module category $\M=\bigoplus_{\sigma\in G}\M_\sigma$ and
$\sigma \in G$, we define a new $G$-graded $\C$-module category
$\M(\sigma)$, as the $\C$-module category $\M$ with
$\M(\sigma)_\tau= \M_{\tau\sigma}$ for all $\tau \in G$.

\smallbreak A $\C$-module functor $F: \M\to \N$ is called a graded
$\C$-module functor of degree $\sigma$ ($\sigma \in G$), if
$F(M_x)\in \N_{x\sigma}$ for all $x\in G, M_x\in \M_x$. Graded
module functors of degree $\sigma$ build a full abelian subcategory
$\F_\C(\M,\N)_\sigma$ of $\F_\C(\M,\N)$. We also have  following
equalities:

\begin{align*}
    \F_\C(\M,\N)_e &= \F^{gr}_\C(\M,\N),\\
    \F_\C(\M,\N)_\sigma &= \F_\C^{gr}(\M,\N(\sigma))= \F_\C^{gr}(\M(\sigma^{-1}),\N).
\end{align*}

Note that  $\C_\M^*=\F_\C(\M,\M) = \bigoplus_{\sigma \in
G}\F_\C(\M,\M)_{\sigma}$, and with this grading $\C_\M^*$ is a
$G^{op}$-graded (multi)-fusion category.
\begin{defin}
A graded tensor category over a group $G$ will be called a crossed
product tensor category if every homogeneous component has at least
one multiplicatively invertible object.
\end{defin}

\begin{prop}\label{prop  invariant sii product cruzado}
Let $\C$ be a $G$-graded fusion category. An inde\-com\-po\-sable $\C_e$-module category $\N$ is invariant if and only if $\C_{\C\boxtimes_{\C_e}\N}^*$ is $G^{op}$-crossed product fusion category.
\end{prop}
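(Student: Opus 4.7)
Write $\M = \C\boxtimes_{\C_e}\N$. By Proposition \ref{Prop Inducida como producto tensorial} combined with the $G/e = G$ grading from Theorem \ref{2-equivalencia con graduados}, $\M$ carries a canonical $G$-grading with $\M_\tau = \C_\tau\boxtimes_{\C_e}\N$ and $\M_e\simeq \N$ as $\C_e$-module categories. Under the $G^{op}$-grading on $\C_\M^*$ recalled above, an object of $(\C_\M^*)_\sigma = \F_\C^{gr}(\M,\M(\sigma))$ is precisely a $\C$-module endofunctor $F$ of $\M$ satisfying $F(\M_\tau)\subseteq \M_{\tau\sigma}$ for every $\tau\in G$. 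The plan is to translate between invertible objects of $(\C_\M^*)_\sigma$ and $\C_e$-module equivalences $\N\simeq \C_\sigma\boxtimes_{\C_e}\N$.

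For the ``crossed product implies invariant'' direction, suppose $F\in(\C_\M^*)_\sigma$ is multiplicatively invertible with inverse $F^{-1}\in(\C_\M^*)_{\sigma^{-1}}$. The isomorphisms $F^{-1}F\cong\id_\M$ and $FF^{-1}\cong\id_\M$ are graded, hence restrict component-wise. Taking the $e$-component shows that $F|_{\M_e}\colon\M_e\to\M_\sigma$ is an equivalence of categories with quasi-inverse $F^{-1}|_{\M_\sigma}$. Since $F$ is a $\C$-module functor and $\C_e$ preserves each graded piece, this restriction is automatically $\C_e$-linear, giving $\N\simeq\M_e\simeq\M_\sigma = \C_\sigma\boxtimes_{\C_e}\N$ as $\C_e$-module categories. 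As $\sigma$ was arbitrary, $\N$ is $G$-invariant.

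For the converse, assume $\N$ is $G$-invariant. Fix $\sigma\in G$ and an equivalence $\varphi_\sigma\colon \N\to \C_\sigma\boxtimes_{\C_e}\N$ of $\C_e$-module categories. Consider the regraded $\C$-module category $\M(\sigma)$ whose $\tau$-component is $\M_{\tau\sigma}$; its trivial component is $\M(\sigma)_e = \M_\sigma = \C_\sigma\boxtimes_{\C_e}\N$. Thus $\varphi_\sigma$ is an equivalence between the trivial components of the two $G$-graded $\C$-module categories $\M$ and $\M(\sigma)$. By the 2-equivalence of Theorem \ref{2-equivalencia con graduados}, $\varphi_\sigma$ extends (uniquely up to isomorphism) to an equivalence $\Phi_\sigma\colon\M\to\M(\sigma)$ of $G$-graded $\C$-module categories, which is exactly a graded $\C$-module functor $\M\to\M$ of degree $\sigma$, i.e.\ an object of $(\C_\M^*)_\sigma$. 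Its quasi-inverse is an object of $(\C_\M^*)_{\sigma^{-1}}$ and the unit/counit exhibit $\Phi_\sigma$ as multiplicatively invertible in $\C_\M^*$. Hence every homogeneous component of $\C_\M^*$ contains an invertible object, so $\C_\M^*$ is a $G^{op}$-crossed product fusion category.

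The only technical point requiring care is that the $\C_e$-equivalence $\varphi_\sigma$ really does lift to a $\C$-module equivalence $\Phi_\sigma$ that is graded of degree $\sigma$; this is what the induction 2-equivalence of Theorem \ref{2-equivalencia con graduados} buys us, since the induction of $\varphi_\sigma$ produces precisely the required $\Phi_\sigma$ (up to identifying $\M(\sigma)$ with $\ind_{\C_e}^\C(\C_\sigma\boxtimes_{\C_e}\N)$ and $\M$ with $\ind_{\C_e}^\C(\N)$ via Proposition \ref{Prop Inducida como producto tensorial}). Everything else reduces to straightforward bookkeeping with graded $\C$-module functors.
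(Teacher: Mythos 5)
Your proof is correct and follows essentially the same route as the paper: both rest on the restriction/induction 2-equivalence of Theorem \ref{2-equivalencia con graduados}, which identifies $(\C_{\M}^*)_\sigma=\F_\C(\M,\M)_\sigma$ with $\F_{\C_e}(\N,\C_\sigma\boxtimes_{\C_e}\N)$ and hence invertible objects in degree $\sigma$ with $\C_e$-module equivalences $\N\simeq\C_\sigma\boxtimes_{\C_e}\N$. The paper states this identification once and concludes; you simply unfold the two directions explicitly.
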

\begin{proof}
By Theorem \ref{2-equivalencia con graduados}, if $\M$ and $\N$ are $G$-graded $\C$-module category, the induction functor  defines  equivalences \[ \F_\C(\M,\N)_\sigma \cong  \F_{\C_e}(\M_e,\N_\sigma) \cong  \F_{\C_e}(\M_{\sigma^{-1}},\N_e).\]

Thus  $\F_\C(\C\boxtimes_{\C_e}\N,\C\boxtimes_{\C_e}\N)$ is a crossed product if and only if $\F_\C(\C\boxtimes_{\C_e}\N,\C\boxtimes_{\C_e}\N)_\sigma \cong \F_{\C_e}(\N,\C_\sigma\boxtimes_{\C_e}\N)$ has at least one equivalence for all $\sigma\in G$, \emph{i.e.} if $\N$ is $G$-invertible.
\end{proof}
Let $\C$ be a fusion category. An indecomposable $\C$-module category is called pointed module category if $\C_\M^*$ is a pointed fusion category. A fusion category is called \emph{group-theoretical} if it admits a pointed $\C$-module category, see \cite{ENO}. Group-theoretical categories can be explicitly described in terms of finite groups and their cohomology (see \cite{O2}).

We are now ready to prove Theorem \ref{teor group-theo}, which is a generalization of \cite[Theorem 3.5]{Non-grouptheo}

\begin{proof}[Proof of Theorem \ref{teor group-theo}]
Let $\N$ be a $G$-invariant pointed $\C_e$-module category. Then by
Proposition \ref{prop  invariant sii product cruzado},
$\C_{\C\boxtimes_{\C_e}\N}^*$  is  a $G^{op}$-crossed product
category, where $(\C_{\C\boxtimes_{\C_e}\N}^*)_e\cong (\C_e)_\N^*$
is a pointed fusion category, so $\C_{\C\boxtimes_{\C_e}\N}^*$ is
pointed.

Conversely, let $\M$ be a pointed $\C$-module category and
$\N\subseteq \M$ be a $\C_e$-submodule category. Then by \cite[Lemma
2.2]{Non-grouptheo} and Proposition \ref{equivalencia mu} the
stabilizer of $[\N]$ in $\Omega_{\C_e}(\M)$ is $G$, so $\N$ is
$G$-invariant. Now, the same argument of  \cite[Theorem
3.5]{Non-grouptheo} implies that $\N$ is a pointed $\C_e$-module
category.
\end{proof}

\begin{obs}
 Let $\C$ be a $G$-graded fusion category. By
\cite[Theorem 3.3]{GNN} and \cite[Remark 2.11]{GNN}, the group $G$
acts on $\mathcal{Z}(\C_e)$ by braided autoequivalences, and
therefore $G$ also acts on the set of Lagrangian subcategories of
$\mathcal{Z}(\C_e)$ (see \cite[subsection 1.4.1]{DGNO} for the
definition of Lagrangian subcategories of a braided fusion
category). In \cite[Corollary 3.10]{GNN} the following criterion for
a graded fusion category to be group-theoretical was proved: $\C$ is
group-theoretical if and only if $\mathcal{Z}(\C_e)$ contains a
$G$-stable Lagrangian subcategory. As we shall see, the above
criterion is equivalent to the criterion Theorem \ref{teor
group-theo}. Using the bijective correspondence between equivalence
classes of Lagrangian subcategory of $\mathcal{Z}(\C_e)$ and pointed
$\C_e$-module categories proved in \cite[Theorem 4.66]{DGNO}, and
the description of the action of $G$ on $\mathcal{Z}(\C_e)$ given in
\cite[subsection 3.1]{GNN}, it is easy to see that if $\N$ is a
pointed $\C_e$-module category and $\mathcal{L}$ is the
corresponding Lagrangian subcategory of $\mathcal{Z}(\C_e)$, then
for any $\sigma\in G$, the Lagrangian subcategory corresponding to
$\C_\sigma\boxtimes_{\C_e} \N$ is $\sigma_*(\mathcal{L})$. Thus,
$\C_e$ has a $G$-invariant $\C_e$-module category if and only if
$\mathcal{Z}(\C_e)$ has a $G$-stable Lagrangian subcategory.
\end{obs}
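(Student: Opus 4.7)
The plan is to prove the key compatibility underlying the remark, namely that for a pointed $\C_e$-module category $\N$ with corresponding Lagrangian subcategory $\mathcal{L}(\N) \subset \mathcal{Z}(\C_e)$ and any $\sigma \in G$,
\[
\mathcal{L}(\C_\sigma \boxtimes_{\C_e} \N) \;=\; \sigma_*(\mathcal{L}(\N)).
\]
Once this is in hand, the equivalence of the two criteria is formal: $\C_e$ admits a $G$-invariant pointed module category $\N$ (so $\C_\sigma \boxtimes_{\C_e} \N \simeq \N$ for every $\sigma$) if and only if $\sigma_*(\mathcal{L}(\N)) = \mathcal{L}(\N)$ for every $\sigma$, which is $G$-stability in the sense of \cite[Corollary 3.10]{GNN}.

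First I would make both sides explicit. By Corollary \ref{mapa picard}, each graded component $\C_\sigma$ is an invertible $\C_e$-bimodule category with inverse $\C_{\sigma^{-1}}$, and the $G$-action on $\mathcal{Z}(\C_e)$ constructed in \cite[Section 3.1]{GNN} is the induced Brauer--Picard action: $\sigma_*$ is the braided autoequivalence of $\mathcal{Z}(\C_e)$ corresponding to the invertible bimodule $\C_\sigma$. On the module-category side, tensoring with $\C_\sigma$ is a $2$-equivalence of $\C_e$-module categories and induces a tensor equivalence of duals $(\C_e)^*_\N \xrightarrow{\sim} (\C_e)^*_{\C_\sigma \boxtimes_{\C_e} \N}$; in particular pointedness is preserved and the assignment $\N \mapsto (\C_e)^*_\N$ descends to $G$-orbits.

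The main step, and the main obstacle, is the naturality of the DGNO bijection under Brauer--Picard equivalences: for an invertible $\C_e$-bimodule $\mathcal{P}$ inducing a braided autoequivalence $\Phi_\mathcal{P}$ of $\mathcal{Z}(\C_e)$, one should have
\[
\mathcal{L}(\mathcal{P} \boxtimes_{\C_e} \N) \;=\; \Phi_\mathcal{P}(\mathcal{L}(\N)).
\]
I would prove this by unwinding the construction of \cite[Theorem 4.66]{DGNO}: $\mathcal{L}(\N)$ is the pullback of the canonical (pointed) Lagrangian in $\mathcal{Z}((\C_e)^*_\N)$ along the braided Morita equivalence $\mathcal{Z}(\C_e) \simeq \mathcal{Z}((\C_e)^*_\N)$, and tensoring by $\mathcal{P}$ intertwines this Morita equivalence for $\N$ with the analogous one for $\mathcal{P} \boxtimes_{\C_e} \N$, through the comparison $(\C_e)^*_\N \simeq (\C_e)^*_{\mathcal{P} \boxtimes_{\C_e} \N}$. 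The technical heart of the argument is the bookkeeping required to match the two half-braidings on objects of $\C_e$, one coming from the action on $\N$ and the other from its action on $\mathcal{P} \boxtimes_{\C_e} \N$. Specializing to $\mathcal{P} = \C_\sigma$ yields the displayed equality, after which the biconditional stated in the remark is immediate.
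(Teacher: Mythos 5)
Your proposal is correct and follows essentially the same route as the paper: the paper's remark itself only cites the DGNO bijection between Lagrangian subcategories and pointed module categories together with the GNN description of the $G$-action on $\mathcal{Z}(\C_e)$, and asserts the equivariance $\mathcal{L}(\C_\sigma\boxtimes_{\C_e}\N)=\sigma_*(\mathcal{L}(\N))$, which is exactly the key compatibility you isolate and propose to verify by unwinding those constructions. Your identification of the action as the Brauer--Picard action of the invertible bimodule categories $\C_\sigma$ (via Corollary \ref{mapa picard}) is consistent with the paper's framework, and the reduction of the equivalence of criteria to this compatibility is the same formal step the paper takes.
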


\begin{corol}\label{corol que aplica a TY}
Let $\C$ be a $G$-graded fusion category such that $\C_e \cong
\text{Vec}_A$, where $A$ is an abelian group. Then $\C$ has an
invariant module category if and only if $\C$ is group-theoretical.
\end{corol}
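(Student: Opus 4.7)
The plan is to derive the corollary from Theorem \ref{teor group-theo}, which says $\C$ is group-theoretical if and only if $\C_e$ admits a $G$-invariant \emph{pointed} module category. The direction $(\Leftarrow)$ is then immediate: any $G$-invariant pointed module category is in particular $G$-invariant. For $(\Rightarrow)$ it suffices to prove the auxiliary fact that when $A$ is a finite abelian group, every indecomposable module category over $\Vect_A$ is automatically pointed; given this fact, a $G$-invariant $\C_e$-module category is automatically pointed, and Theorem \ref{teor group-theo} concludes.

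To prove the auxiliary fact, I would invoke the classification of indecomposable module categories over $\Vect_A$: every such $\M$ is equivalent to $\M(L,\psi)$ corresponding to the twisted group algebra $B:=k_\psi[L]\in \Vect_A$, for a subgroup $L\subseteq A$ and a class $\psi\in H^2(L,k^*)$. Then $(\Vect_A)^*_{\M(L,\psi)}$ is equivalent to the category of $B$-bimodules in $\Vect_A$. Because $A$ is abelian, both the left and right $L$-actions on a $B$-bimodule shift the $A$-grading additively, so a simple $B$-bimodule is supported on a single coset $aL\in A/L$; on such a coset, the structure reduces (after absorbing $\psi$ into a change of basis) to a character of $L$, producing $|A/L|\cdot|L|=|A|$ simple bimodules. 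Since $\FP((\Vect_A)^*_\M)=\FP(\Vect_A)=|A|$ equals the number of simples, each has Frobenius--Perron dimension one, so the dual is pointed.

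The main obstacle is the simple-bimodule count in the last step, which uses commutativity of $A$ essentially (for $G$ nonabelian the analogous statement fails, as $(\Vect_G)^*_{\M(G,1)}\cong \text{Rep}(G)$ is nonpointed). Once this is in hand the corollary follows formally from Theorem \ref{teor group-theo}.
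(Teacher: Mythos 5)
Your proposal is correct and follows essentially the same route as the paper: both directions are reduced to Theorem \ref{teor group-theo} via the auxiliary fact that every indecomposable module category over Vec$_A$ ($A$ abelian) is pointed; the only difference is that the paper simply cites this fact from Naidu (\cite[Theorem 3.4]{Naidu}), whereas you prove it directly. Your bimodule count does go through, though the parenthetical ``absorbing $\psi$ into a change of basis'' is not quite the right reason: a nondegenerate $\psi$ cannot be trivialized by rescaling; rather, on a simple $B$-bimodule supported on a coset $a+L$ the operators $u_l=\rho_l^{-1}\lambda_l$ satisfy $u_lu_{l'}=u_{ll'}$ on the nose because the left and right $\psi$-twists cancel (using that $\lambda$ and $\rho$ commute and $A$ is abelian), so one genuinely gets a character of $L$ and hence $|A/L|\cdot|L|=|A|$ simples, after which your Frobenius--Perron dimension argument correctly forces all of them to be invertible.
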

\begin{proof}
If $A$ is an abelian group, then every indecomposable module
category over Vec$_A$ is pointed, see \cite[Theorem 3.4]{Naidu}. So,
the corollary follows immediately by Theorem \ref{teor group-theo}.
\end{proof}

A Tambara-Yamagami category is a $\mathbb Z_2$-graded fusion category $\C=\C_0\oplus\C_1$, such that $\C_0$ is pointed and $\C_1$ has only one simple object up to isomorphisms, see \cite{TY} for a complete classification.
\begin{obs}
Corollary \ref{corol que aplica a TY}\ applies to Tambara-Yamagami
categories, and their generalizations  \cite{2Gen-TY},
\cite{Gen-TY}. In particular, if a fusion category of this type is
the category of representations of a Hopf algebra, \emph{i.e.}, if
it has a fiber functor, then it is a group-theoretical fusion
category (it is well-known for TY categories, \cite[Remark
8.48]{ENO}).
\end{obs}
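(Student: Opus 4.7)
The plan is to split the statement into two claims: (a) Corollary \ref{corol que aplica a TY} applies to Tambara-Yamagami categories and to the generalizations of \cite{2Gen-TY} and \cite{Gen-TY}; and (b) existence of a fiber functor on such a category forces group-theoreticity. Part (a) is a structural check against definitions, and part (b) is the substantive step.

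For (a), I would unwind the relevant definitions. A Tambara-Yamagami category is $\mathbb{Z}_2$-graded as $\C=\C_0\oplus\C_1$, and by the classification of \cite{TY} the trivial component $\C_0$ is equivalent to $\text{Vec}_A$ for a finite abelian group $A$ (abelianness of $A$ is forced by the existence of the TY fusion rule on $\C_1$). The generalizations of \cite{2Gen-TY} and \cite{Gen-TY} are by construction $G$-graded fusion categories whose trivial component is Vec of a finite abelian group. Hence in all these cases the hypothesis of Corollary \ref{corol que aplica a TY} is satisfied.

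For (b), the key observation is that a fiber functor $\omega:\C\to\text{Vec}$ endows $\text{Vec}$ with the structure of an indecomposable $\C$-module category via $X\cdot V=\omega(X)\otimes V$. Since $\text{Vec}$ has a unique simple object, it is also indecomposable as a $\C_e$-module category, and Proposition \ref{equivalencia sub} identifies $\C_\sigma\boxtimes_{\C_e}\text{Vec}$ with the subcategory $\C_\sigma\overline{\otimes}\text{Vec}\subseteq\text{Vec}$. The latter is nonzero (any nonzero object of $\C_\sigma$ acts on the unit of $\text{Vec}$ to give a nonzero object), hence equals $\text{Vec}$. Therefore $\text{Vec}$ is a $G$-invariant $\C_e$-module category, and Corollary \ref{corol que aplica a TY} immediately yields that $\C$ is group-theoretical.

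The only delicate point I foresee lies in (a): verifying that the trivial components appearing in \cite{2Gen-TY} and \cite{Gen-TY} are genuinely equivalent to $\text{Vec}_A$ with trivial associator, rather than some twisted $\text{Vec}_A^\omega$. Should a twisted version occur in either reference, I would bypass the corollary and apply Theorem \ref{teor group-theo} directly, using that over $\text{Vec}_A^\omega$ with $A$ abelian every indecomposable module category is still pointed; this is enough to deduce group-theoreticity from the $G$-invariance of $\text{Vec}$ established in (b).
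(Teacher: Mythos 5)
Your proposal is correct and follows the argument the paper intends (the remark is stated without proof, but its evident justification is exactly this): a fiber functor makes $\text{Vec}$ a rank-one $\C$-module category, which is automatically indecomposable and, by Proposition \ref{equivalencia sub} applied to the faithful grading, $G$-invariant as a $\C_e$-module category, so Corollary \ref{corol que aplica a TY} (or Theorem \ref{teor group-theo} directly, via pointedness of module categories over $\text{Vec}_A$) gives group-theoreticity. Your cautionary note about a possibly twisted associator on $\C_e$ is also moot in the fiber-functor case, since the fiber functor restricts to $\C_e$ and forces the trivial class.
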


Module categories over group-theoretical fusion categories were
classified by Ostrik in \cite{O2}. As an application of some of our
results, we shall describe indecomposable module categories over a
non group-theoretical Tamabara-Yamagami category.

\begin{prop}
Let $\C$ be a non group-theoretical Tambara-Yamagami ca\-tegory,
where $\C_0=\text{Vec}_A$ for an abelian group $A$. Then every
indecomposable $\C$-module category is equivalent to $\C_{k_\alpha
B}$, where $k_\alpha B\in $Vec$_A\subset \C$ is a twisted group
algebra for $B\subset A$.
\end{prop}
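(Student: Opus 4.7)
The plan is to combine Corollary \ref{corol main result} with the assumption that $\C$ is not group-theoretical and the fact that $\C_e \cong \Vect_A$ for an abelian group $A$. Applying Corollary \ref{corol main result} to the indecomposable $\C$-module category $\M$, one obtains a subgroup $S \subset G = \mathbb{Z}_2$, an indecomposable $\C_e$-module category $\N$, and a $\C_S$-extension $(\N,\odot)$ of $\N$ such that $\M \cong \C \boxtimes_{\C_S} \N$. Since $G = \mathbb{Z}_2$, only two options arise: $S = \{e\}$ or $S = G$.

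The central step is to exclude $S = G$. Suppose such an extension $(\N,\odot)$ exists. Its underlying $\C_e$-module is the original indecomposable $\N$, so $\Omega_{\C_e}((\N,\odot)) = \{[\N]\}$. Applying Theorem \ref{theorem clifford}(1) to the $\C_S$-module category $(\N,\odot)$, the action of $S$ on this singleton is transitive, hence $\st([\N]) \supseteq S$. If $S = G$, this would mean that $\N$ is a $G$-invariant indecomposable $\C_e$-module category, and since $A$ is abelian, Corollary \ref{corol que aplica a TY} would then force $\C$ to be group-theoretical, contradicting the hypothesis. Consequently $S=\{e\}$.

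With $S = \{e\}$, Proposition \ref{Prop Inducida como producto tensorial} gives $\M \cong \C \boxtimes_{\C_e} \N \cong \ind_{\C_e}^\C(\N)$. It remains to identify $\N$: since $\C_e \cong \Vect_A$ with $A$ abelian, every indecomposable $\C_e$-module category is pointed and, by \cite[Theorem 3.4]{Naidu}, equivalent to $(\Vect_A)_{k_\alpha B}$ for some subgroup $B \subset A$ and some class $\alpha \in H^2(B, k^*)$. Unwinding the definition of induction, this yields $\M \cong \C_{k_\alpha B}$, as claimed. The proof is essentially a bookkeeping of previous results; the only mildly delicate point is the observation that the existence of a $\C_S$-extension forces $S$-invariance of the underlying $\C_e$-module, which is where Theorem \ref{theorem clifford} is needed to bridge Corollary \ref{corol main result} and Corollary \ref{corol que aplica a TY}.
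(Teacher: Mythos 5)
Your proof is correct and follows essentially the same route as the paper: both arguments reduce to observing that, by Corollary \ref{corol que aplica a TY}, non-group-theoreticity forbids any $\mathbb{Z}_2$-invariant $\C_0$-module category, so the relevant subgroup/stabilizer is trivial and $\M\cong\ind_{\C_0}^{\C}(\N)=\C_{k_\alpha B}$ with $\N$ classified by Ostrik's result for $\Vect_A$. The only cosmetic difference is that you pass through Corollary \ref{corol main result} (itself a consequence of Theorem \ref{theorem clifford}), whereas the paper invokes Theorem \ref{theorem clifford} directly.
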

\begin{proof}

By Corollary \ref{corol que aplica a TY}, a Tambara-Yamagami
category is non group-theo\-retical if and only if it has only non
$\mathbb Z_2$-invariant $\C_0$-module categories. Let $\M$ be an
indecomposable $\C$-module category, and $\N\subset \M$ be an
indecomposable $\C_0$-module category. If $\N$ is an
indecompo\-sable module category over $\C_0=$Vec$_A$, then by
\cite{O2}, there exists a subgroup $B\subset A$, and $\alpha\in
Z^2(B,k^*)$ such that $\N\cong (\C_0)_{k_\alpha B}$ as $\C_0$-module
categories.

Since  $\C$ has only non $\mathbb Z_2$-invariant $\C_0$-module
categories, Theorem \ref{theorem clifford} implies that $\M\cong
\ind^\C_{\C_0}\N= \C_{k_\alpha B}$.
\end{proof}

\section{Extending indecomposable module categories}

Corollary \ref{corol main result} reduces the construction of indecomposable $\C$-module categories over a graded fusion category $\C=\bigoplus_{\sigma\in G}\C_\sigma$, to the construction  of  $\C_H$-module categories $\M$ such that the restriction to $\C_e$  remains indecomposable, for some subgroup $H\subset G$.

In this section we shall provide a necessary and sufficient
condition when an indecomposable $\C_e$-module category can be
extended.

\subsection{Semi-direct product and equivariant fusion categories}

Let $\underline{\text{Aut}_\otimes(\C)}$ be the monoidal category where objects are tensor auto-equivalences of $\C$, arrows are tensor natural isomorphisms, and tensor product is the composition of functors. An action of the group  $G$ over a  monoidal category  $\C$, is a
monoidal functor  $*:\underline{G}\to
\underline{\text{Aut}_\otimes(\C)}$.

Given an action $*:\underline{G}\to
\underline{\text{Aut}_\otimes(\C)}$ of $G$ on $\C$,  the semi-direct
product fusion category, denoted by $\C\rtimes G$ is defined as
follows: As an abelian category $\C\rtimes G= \bigoplus_{\sigma\in
G}\C_\sigma$, where $\C_\sigma =\C$ as an abelian category, the
tensor product is \[[X, \sigma]\otimes [Y,\tau]:= [X\otimes
\sigma_*(Y), \sigma\tau],\ \  \   X,Y\in \C,\ \ \sigma,\tau\in G,\]
and the unit object is $[\unit,e]$. See \cite{tambara} for the
associativity constraint and a proof of the pentagon identity.

The category  $\C\rtimes G$ is $G$-graded by  \[\C\rtimes G=\bigoplus_{\sigma\in G}(\C\rtimes
G)_\sigma, \ \  \  \text{where}\  \  (\C\rtimes G)_\sigma = \C_\sigma,\] and the objects $[\unit,
\sigma]\in (\C\rtimes G)_\sigma$ are invertible, with inverse $[\unit,\sigma^{-1}]\in (\C\rtimes
G)_{\sigma^{-1}}$.

Another useful construction of a fusion category starting from a
$G$-action over a fusion category $\C$ is  the
$G$-equivariantization of $\C$, denoted by $\C^G$, see \cite{DGNO}.
Objects of this category are objects $X$ of $\C$ equipped with an
isomorphism $u_\sigma : \sigma_*(X)\to X$ for all $\sigma \in G$,
such that
$$u_{\sigma\tau}\circ\gamma_{\sigma,\tau} = u_\sigma\circ\sigma_*(u_\tau),$$
where $\gamma_{\sigma,\tau}: \sigma_*(\tau_*(X)) \to (\sigma\tau)_*(X)$ is the natural isomorphism associated to the action.
Morphisms and tensor product of equivariant objects are defined in an obvious
way.

\begin{lem}\label{lema semidir sii funtor graduado}
Let $\C$ be a $G$-graded fusion category, then:

\begin{enumerate}
  \item $\C$ is equivalent to a semi-direct product fusion category over $G$ if and only if there is a $G$-graded tensor functor $\text{Vec}_G\to \C$.
  \item There exists a correspondence between $G$-actions over $\C_e$, such that the associated semidirect product is tensor equivalent to $\C$, and  $G$-graded tensor functors $\text{Vec}_G\to \C$.
\end{enumerate}

\end{lem}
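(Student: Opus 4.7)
The plan is to prove both parts by exhibiting mutually inverse constructions between $G$-graded tensor functors $F:\text{Vec}_G\to \C$ and pairs consisting of a $G$-action on $\C_e$ together with a graded tensor equivalence $\C_e\rtimes G \cong \C$; part (1) follows from the existence of these constructions, and part (2) from their compatibility. For the easy direction of (1), if $*:\underline{G}\to\underline{\Aut_\otimes(\C_e)}$ is an action and $\Psi:\C_e\rtimes G\to\C$ a graded tensor equivalence, then inside $\C_e\rtimes G$ the objects $[\unit,\sigma]$ are invertible of degree $\sigma$, and the defining tensor rule gives $[\unit,\sigma]\otimes[\unit,\tau]=[\unit,\sigma\tau]$ strictly, so $k_\sigma\mapsto[\unit,\sigma]$ is a strict graded tensor functor $\text{Vec}_G\to \C_e\rtimes G$; composing with $\Psi$ yields the desired $F$.

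For the converse, suppose $F:\text{Vec}_G\to \C$ is a $G$-graded tensor functor with tensor constraint $\phi_{\sigma,\tau}:F(k_\sigma)\otimes F(k_\tau)\to F(k_{\sigma\tau})$, and write $L_\sigma:=F(k_\sigma)\in\C_\sigma$. Each $L_\sigma$ is invertible (its inverse being $L_{\sigma^{-1}}$ via $\phi$), so tensoring with $L_\sigma$ is an equivalence $\C_e\to \C_\sigma$ of left $\C_e$-module categories. I would define the action by conjugation, $\sigma_*(X):=L_\sigma\otimes X\otimes L_\sigma^{-1}$ for $X\in\C_e$, extracting the coherence $\gamma_{\sigma,\tau}:\sigma_*\tau_*\To (\sigma\tau)_*$ from $\phi_{\sigma,\tau}$; the hexagon for $\phi$ translates directly into the pentagon for $*$ as a monoidal functor $\underline{G}\to \underline{\Aut_\otimes(\C_e)}$. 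Then I would build the inverse graded tensor functor $\Phi:\C_e\rtimes G\to \C$ by $[X,\sigma]\mapsto X\otimes L_\sigma$, whose tensor structure on $\Phi([X,\sigma])\otimes \Phi([Y,\tau]) = X\otimes L_\sigma\otimes Y\otimes L_\tau$ versus $\Phi([X\otimes \sigma_*(Y),\sigma\tau]) = X\otimes L_\sigma\otimes Y\otimes L_\sigma^{-1}\otimes L_{\sigma\tau}$ is induced by $\phi_{\sigma,\tau}$. Since $\Phi$ restricts on each component to an equivalence $\C_e\to \C_\sigma$, it is a graded tensor equivalence.

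For (2), I would verify that these two constructions are mutually inverse up to equivalence. Starting from $(*,\Psi)$, extracting $F:=\Psi\circ(k_\sigma\mapsto[\unit,\sigma])$ and then forming conjugation by $L_\sigma=\Psi([\unit,\sigma])$ recovers $*$, because in $\C_e\rtimes G$ we have $[\unit,\sigma]\otimes[X,e]\otimes[\unit,\sigma^{-1}]\cong[\sigma_*(X),e]$; and the associated $\Phi$ coincides with $\Psi$ on objects by construction. Conversely, starting from $F$, building $*$ and then $\Phi$, the composite $k_\sigma\mapsto[\unit,\sigma]\mapsto L_\sigma$ reproduces $F$ together with its tensor constraint $\phi$. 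The main obstacle is the coherence bookkeeping: the nontrivial associator of the semi-direct product $\C_e\rtimes G$ as defined in \cite{tambara} must be matched, under $\Phi$, against the associator of $\C$, and this matching amounts precisely to the 3-cocycle identity encoded in the tensor structure $\phi$ of $F$. Once this identification is made carefully, the rest of the argument is a direct translation between the two packages of data.
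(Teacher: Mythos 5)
Your argument is correct and is essentially the same as the paper's: the paper's ``proof'' is only a citation to \cite[Section 3]{G}, where precisely this construction is carried out --- the invertible objects $L_\sigma=F(k_\sigma)$, the conjugation action $\sigma_*(X)=L_\sigma\otimes X\otimes L_\sigma^{-1}$, and the graded equivalence $[X,\sigma]\mapsto X\otimes L_\sigma$ matching Tambara's associator against the tensor constraint of $F$. The only cosmetic point is that $[\unit,\sigma]\otimes[\unit,\tau]=[\sigma_*(\unit),\sigma\tau]$, so the functor $k_\sigma\mapsto[\unit,\sigma]$ is strict only after normalizing the action to preserve units strictly, which is harmless.
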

\begin{proof}
It follows from \cite[Section 3]{G}.
\end{proof}

\subsection{Extending indecomposable module categories}

Recall, if $(\M,\otimes)$ is a $\C_e$-module category, then an
extension of $\M$ is a $\C$-module category $(\M,\odot)$ such that
$(\M,\otimes)$ is obtained by restriction to $\C_e$. We shall say
that two $\C$-extension of a $\C_e$-module categories are equivalent
if they are equivalent as  $\C$-module categories.

Each graded functor $(F,\mu):$Vec$_G\to \C$ defines a $\C$-extension $(\C_e,\odot^F,m^F)$ of the $\C_e$-module category $(\C_e, \otimes)$ in the following way: Let $F(k_\sigma)= \unit_\sigma,$ and $\mu_{\sigma, h}:\unit_{\sigma h}\to \unit_\sigma\otimes \unit_h$, then we define $(\C_e,\odot^F,m^F)$ by
\[
V_\sigma\odot^F X_e = V_\sigma\otimes X_e\otimes \unit_{\sigma^{-1}}, \  \
m_{V_\sigma,V_\tau, X_e}^F= \id_{V_\sigma\otimes V_\tau\otimes X_e}\otimes \gamma_{k_{\tau^{-1}},k_{\sigma^{-1}}}\] for all $V_\sigma \in \C_\sigma, V_\tau \in\C_\tau, X_e\in \C_e,$ and  $\sigma,\tau \in G$.

\begin{defin}
We shall say that two graded functors $(F,\gamma), (F',\gamma'):
\text{Vec}_G\to \C$ are conjugate if there is a multiplicatively
invertible object $U\in \C_e$ and a family of isomorphisms
$\Theta_\sigma:F(k_\sigma)\otimes U\to U\otimes F'(k_\sigma)$ for
all $\sigma \in G,$ such that the diagram

\begin{equation}\label{pentagono conjugados}
\begin{diagram}
\node{F(k_{\sigma\tau})\otimes U} \arrow{s,l}{\gamma_{k_\sigma,k_\tau}\otimes\id_U}\arrow[2]{e,t}{\Theta_{\sigma\tau}} \node{} \node{U\otimes F'(k_{\sigma\tau})} \arrow{s,r}{\id_U\otimes\gamma'_{k_\sigma,k_\tau}}\\
\node{F(k_\sigma)\otimes F(k_\tau)\otimes U}\arrow{se,b}{\id_{F(k_\sigma)}\otimes \Theta_\tau} \node{}\node{U\otimes F'(K_\sigma)\otimes F'(k_\tau)}\\
\node{}\node{F(k_\sigma)\otimes U\otimes F'(k_\tau)} \arrow{ne,r}{\Theta_{\sigma}\otimes\id_{F'(k_\tau)}}
\end{diagram}
\end{equation}commutes for all $\sigma,\tau \in G$.
\end{defin}

If $(F,\gamma), (F',\gamma'): \text{Vec}_G\to \C$ are conjugate
graded functors, then the $\C$-extensions associated are equivalent.
In fact, the functor $\C_e\to \C_e, X_e\mapsto X_e\otimes U$, with
the natural isomorphisms \[\id_{V_\sigma\otimes X_e}\otimes
\Theta_{\sigma^{-1}}: V_\sigma\otimes X_e\otimes
F(k_{\sigma^{-1}})\otimes U \to V_\sigma\otimes X_e\otimes U\otimes
F'(k_{\sigma^{-1}})\]for all $V_\sigma \in \C_\sigma, X_e\in \C_e$,
define a $\C$-module equivalence.
\begin{prop}\label{lema extension facil}
Let $\C$ be a graded fusion category. Then the $\C_e$-module category $(\C_e,\otimes)$ can be extended if and only if $\C$ is a semi-direct product. There is a one-to-one correspondence between equivalence classes of $\C$-extensions of $\C_e$ and conjugacy classes of graded tensor functors  Vec$_G\to \C$.
\end{prop}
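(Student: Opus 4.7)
The plan is to establish a bijection between equivalence classes of $\C$-extensions of $(\C_e,\otimes)$ and conjugacy classes of graded tensor functors $F\colon\Vect_G\to\C$; the ``extension exists iff $\C$ is a semi-direct product'' clause then drops out of Lemma \ref{lema semidir sii funtor graduado}. The construction $F\mapsto(\C_e,\odot^F)$ and the verification that conjugate graded tensor functors yield equivalent extensions are already carried out before the statement, so two tasks remain: (a) extract a graded tensor functor from an arbitrary $\C$-extension, and (b) show that an equivalence of $\C$-extensions implements a conjugation of the corresponding tensor functors.

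For (a), I would start with a $\C$-extension $(\C_e,\odot)$ and observe that its restriction to $\C_e$ is the regular module category $(\C_e,\otimes)$, which is indecomposable. Hence $\Omega_{\C_e}((\C_e,\odot))$ consists of a single class and Theorem \ref{theorem clifford} forces its $G$-stabilizer to equal $G$; equivalently, $\C_\sigma\boxtimes_{\C_e}(\C_e,\otimes)\simeq(\C_e,\otimes)$ as left $\C_e$-module categories for every $\sigma\in G$. Since the left-hand side is canonically equivalent to $\C_\sigma$ as a left $\C_e$-module category (absorbing the right $\C_e$-factor), each $\C_\sigma$ is equivalent to $\C_e$ as a left $\C_e$-module category, so it contains an invertible object $\unit_\sigma$ (take $\unit_e=\unit$). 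Tracing the equivalence $V_\sigma\mapsto V_\sigma\odot\unit$ and applying the module associator $m_{V_\sigma,X_e,\unit}$ would yield natural isomorphisms
\[
V_\sigma\odot X_e\;\cong\;V_\sigma\otimes X_e\otimes\unit_{\sigma^{-1}},\qquad V_\sigma\in\C_\sigma,\ X_e\in\C_e.
\]

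Transporting the module associator $m_{V_\sigma,W_\tau,X_e}$ along these isomorphisms would produce a natural isomorphism whose source and target differ only in the rightmost tensor factor, namely $\unit_{(\sigma\tau)^{-1}}$ versus $\unit_{\tau^{-1}}\otimes\unit_{\sigma^{-1}}$. Naturality in $V_\sigma,W_\tau,X_e$ forces it to be $\id\otimes\beta_{\sigma,\tau}$ for a unique $\beta_{\sigma,\tau}\colon\unit_{(\sigma\tau)^{-1}}\to\unit_{\tau^{-1}}\otimes\unit_{\sigma^{-1}}$; reindexing yields the tensorator $\gamma_{s,t}\colon\unit_{st}\to\unit_s\otimes\unit_t$ defining $F(k_\sigma):=\unit_\sigma$. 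The pentagon \eqref{pentagono module cat} for $m$ becomes, by the same naturality trick, the cocycle identity for $\gamma$, so $(F,\gamma)$ is a graded tensor functor, and by construction the extension it produces coincides with $(\C_e,\odot)$.

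For (b), an equivalence $\Phi\colon(\C_e,\odot^F)\to(\C_e,\odot^{F'})$ of $\C$-module categories restricts to a $\C_e$-module self-equivalence of $(\C_e,\otimes)$, hence to $-\otimes U$ for some invertible $U\in\C_e$ by Theorem \ref{teorema de watts}. Its $\C$-module functor constraint expands to an isomorphism $V_\sigma\otimes X_e\otimes F(k_{\sigma^{-1}})\otimes U\to V_\sigma\otimes X_e\otimes U\otimes F'(k_{\sigma^{-1}})$, which by the same naturality argument is of the form $\id\otimes\Theta_{\sigma^{-1}}$ for the required family $\Theta_\sigma\colon F(k_\sigma)\otimes U\to U\otimes F'(k_\sigma)$; the coherence equation \eqref{penta funtor modulo} for $\Phi$ then translates directly into the hexagon \eqref{pentagono conjugados}. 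The main bookkeeping obstacle throughout is keeping the $\sigma/\sigma^{-1}$ conventions consistent so that every coherence diagram transports cleanly under the chosen isomorphisms; the repeated use of ``a morphism natural in $V_\sigma$ acts as the identity on the $V_\sigma$-factor'' is precisely what reduces each diagram of $V_\sigma$-natural morphisms to a diagram involving only the distinguished objects $\unit_\sigma$.
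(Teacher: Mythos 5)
Your proposal is correct and follows essentially the same route as the paper: define $\unit_\sigma$ via the $\C_e$-module equivalence $V\mapsto V\odot\unit$ (you reach this equivalence through Theorem \ref{theorem clifford}, the paper through Corollary \ref{mapa picard} and Proposition \ref{equivalencia sub}, but the underlying mechanism is the same), derive $V_\sigma\odot X_e\cong V_\sigma\otimes X_e\otimes\unit_{\sigma^{-1}}$, extract the tensorator $\gamma$ from the module associator and the cocycle identity from the pentagon, and read off the conjugation data $(U,\Theta_\sigma)$ from the module-functor constraint of an equivalence of extensions. This matches the paper's proof step for step, including the appeal to Lemma \ref{lema semidir sii funtor graduado} for the semi-direct product clause.
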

\begin{proof}
If $\C=\C_e\rtimes G$, then the category $\C_e$ is a $\C_e\rtimes G$-module category with action $[V,\sigma]\otimes W= V\otimes \sigma_*(W)$, see  \cite[Example 2.4]{tambara}.

Let $(\C_e,\odot,\mu)$ be an extension of $(\C_e,\otimes)$. By
Corollary \ref{mapa picard} $(\otimes, \C_\sigma)$ is the tensor
product of $\C_\sigma$ and $\C_e$ as $\C_e$-bimodule categories, and
by Proposition \ref{equivalencia sub} $\odot$ defines a
$\C_e$-module equivalence $\overline{\odot}$ such that the diagram
\[
\begin{diagram}
\node{\C_\sigma\times\C_e}\arrow{s,l}{\otimes} \arrow{e,t}{\odot}\node{\C_e}\\
\node{\C_\sigma}\arrow{ne,r}{\overline{\odot}}
\end{diagram}
\]commutes. Thus  $\overline{\odot}(V)= V\odot \unit$, for all $V\in \C_\sigma$. For each $\sigma\in G$, let $\unit_\sigma\in \C_\sigma$ be the unique object (up to isomorphism) such that $\unit_\sigma\odot \unit=\unit$.

Using the natural isomorphisms $\mu_{V_\sigma,X_e,\unit}$ and $\mu_{V_\sigma\otimes X_e, \unit_{\sigma^{-1}},\unit}$, we have

\begin{align*}
    V_\sigma\odot X_e &\cong (V_\sigma\otimes X_e)\odot \unit\\
                     &= (V_\sigma\otimes X_e)\odot (\unit_{\sigma^{-1}}\odot \unit)\\
                     &\cong (V_\sigma\otimes X_e\otimes\unit_{\sigma^{-1}})\odot \unit\\
                     &=V_\sigma\otimes X_e\otimes\unit_{\sigma^{-1}},
\end{align*}for all $V_\sigma \in\C_\sigma, X_e\in \C_e$. Then we can assume that $V_\sigma\odot X_e = V_\sigma\otimes X_e\otimes\unit_{\sigma^{-1}}$ for all $V_\sigma\in \C_\sigma, X_e\in \C_e, \sigma \in G$.

The natural isomorphisms \[\mu_{\unit_\sigma,\unit_h,\unit}: \unit_\sigma\otimes \unit_h\otimes \unit_{h^{-1}\sigma{-1}}  \to \unit_\sigma\otimes\unit_h\otimes \unit_{h^{-1}}\otimes \unit_{\sigma^{-1}}\] define isomorphisms $\gamma_{h^{-1},\sigma^{-1}}:\unit_{h^{-1}\sigma^{-1}}\to \unit_{h^{-1}}\otimes \unit_{\sigma^{-1}}$ for all $\sigma, h\in G$. Now, by the pentagonal equation \eqref{pentagono module cat}, the functor $F:$Vec$_G\to \C, k_\sigma\mapsto \unit_\sigma$ with the natural isomorphisms $\gamma_{\sigma,\tau}: \unit_{\sigma\tau}\to \unit_{\sigma}\otimes\unit_{\tau}$, defines a graded tensor functor. By Lemma \ref{lema semidir sii funtor graduado}, $\C$ is equivalent to a semi-direct product fusion category.

The construction of the graded tensor functor $(F,\gamma):$Vec$_G\to \C$, associated to a $\C$-extension $(\C_e,\odot)$ of $(\C_e,\otimes)$, shows that $(\C_e,\odot)$ is equivalent to $(\C_e,\odot^F)$.

Let $(F,\gamma), (F',\gamma'):$Vec$_G\to \C$ be graded functors, and
$(T,\eta): (\C_e,\odot^F)\to (\C_e,\odot^{F'})$ an equivalence of
$\C$-module categories. The functor $(T,\eta)$ is also an
equivalence of $\C_e$-module categories, so there is a
multiplicatively invertible object $U\in C_e$, such that $T(X_e)=
X_e\otimes U$ and $\eta_{V_e,X_e}=\id_{V_e\otimes X_e\otimes U}$ for
all $V_e, X_e\in \C_e$. The natural isomorphisms
\[\eta_{\unit_{\sigma},\unit, k_{\sigma^{-1}}}: F(k_\sigma)\otimes
F(k_{\sigma^{-1}})\otimes U \to F(k_\sigma)\otimes U\otimes
F'(k_{\sigma^{-1}}),\] define natural isomorphisms
\[\Theta_{\sigma^{-1}}: F(k_{\sigma^{-1}})\otimes U\to U\otimes
F'(k_{\sigma^{-1}}),\]for all $\sigma\in G$. Then the functor $F$
and $F'$ are conjugated by the pair $(U, \Theta_\sigma)_{\sigma \in
G}$,  where the commutativity of the diagram \eqref{pentagono
conjugados} follows from equation \eqref{penta funtor modulo} for
the functor module $(T,\eta)$.

\end{proof}

Let $\M$ be an indecomposable $\C_e$-module category and $\overline{\M}=\ind_{\C_e}^\C(\M)$. Then the fusion category $\C^*_{\overline{\M}}$ is $G^{op}$-graded.

If $A\in \C$ is an indecomposable semisimple algebra, then
${}_A\C_A$ is a fusion category and $B= {}^*A\otimes A\in {}_A\C_A$
is an algebra such that $_B(_A\C_A)_B=\C$, see \cite[Example
3.26]{finite-categories} (note that $_A\C_A$ is equivalent to
$\C_\M^*$ with reversed tensor product, where $\M =\C_A$). Using the
algebra $B$ we can describe a bijective correspondence between
module categories over $\C$ and module categories over $_A\C_A$, the
correspondence is given by
\begin{align*}
    Mod(\C) &\to Mod (_A\C_A)\\
    \M &\mapsto\- _A\M;\\
      Mod(_A\C_A) &\to Mod(\C)\\
      \N         &\mapsto\- _B\N
\end{align*}

Now we are ready to prove Theorem \ref{teor extensiones}.

\begin{proof}[Proof of Theorem \ref{teor extensiones}]
Let $A\in \C_e$ be an algebra such that $\M=(\C_e)_A$. Note that $_A\C_A$ is a $G$-graded fusion category. Suppose that $((\C_e)_A, \odot)$ is an extension of $((\C_e)_A,\otimes)$. Then $_A(\C_e)_A$ is an extension of $_A\C_A$, so by Proposition  \ref{lema extension facil}, $_A(\C)_A$ is a semidirect $G$-product fusion category.

Conversely, let $A\in \C_e$ such that $_A\C_A$ is a semidirect $G$-product fusion category. Then by Proposition \ref{lema extension facil} it defines an extension  $(_A(\C_e)_A,\odot)$ of $(_A(\C_e)_A,\otimes)$. For $B=\ ^*A\otimes A\in \C_e$ we have that $_B(_A(\C_e)_A)_B\cong \C$, so using this tensor equivalence we have a structure of $\C$-module category over $_B(_A(\C_e)_A)=(\C_e)_A$ which is an extension of $(\C_e)_A$.

The second part follows from the second part of Proposition
\ref{lema extension facil}.
\end{proof}

\begin{corol}\label{corol equiv}
Let $\C$ be a $G$-graded fusion category.  If $\M$ is an extension of an indecomposable $\C_e$-module category, then $\C_{\M}^*$ is a $G^{op}$-equivariantization of $(\C_e)_\M^*$. Conversely, if $\C$ is a $G^{op}$-equivariantization and $\M$ is an indecomposable $\C_e$-module category, then $\C_{\M}^*$ is a $G$-graded fusion category with $(\C_{\M}^*)_e\cong (\C_e)_\M^*$.
\end{corol}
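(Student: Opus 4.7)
My plan is to realize $\C^*_\M$ concretely as $\C_e$-module endofunctors of $\M$ equipped with one more coherence datum per $\sigma \in G$, and then to identify that datum with a $G^{op}$-equivariant structure for a natural action of $G^{op}$ on $(\C_e)^*_\M$.

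For the forward direction, the first step is to extract the invariance data from the extension hypothesis. By Theorem \ref{teor extensiones} combined with Proposition \ref{prop  invariant sii product cruzado}, the existence of an extension forces $\M$ to be $G$-invariant as a $\C_e$-module category, so for each $\sigma \in G$ the $\C_e$-balanced bifunctor $\odot\colon \C_\sigma \times \M \to \M$ factors, as in Proposition \ref{equivalencia sub}, through an equivalence of $\C_e$-bimodule categories $a_\sigma \colon \C_\sigma \boxtimes_{\C_e} \M \xrightarrow{\sim} \M$. Using these $a_\sigma$ and the canonical equivalences $\C_\sigma \boxtimes_{\C_e} \C_\tau \cong \C_{\sigma\tau}$ of Corollary \ref{mapa picard}, I would define a monoidal $G^{op}$-action on the fusion category $(\C_e)^*_\M$ by
\[
\sigma_*(F) \ := \ a_\sigma \circ \bigl( \id_{\C_\sigma} \boxtimes_{\C_e} F \bigr) \circ a_\sigma^{-1}, \qquad F \in (\C_e)^*_\M,
\]
with coherence isomorphisms $\gamma_{\sigma, \tau} \colon \sigma_* \tau_* \Rightarrow (\sigma\tau)_*$ inherited from the associativity of the bimodule tensor product.

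The next step is to translate a $\C$-module functor structure on $F \in (\C_e)^*_\M$ into a $G^{op}$-equivariant structure for this action. The extra data promoting a $\C_e$-module functor $F$ to a $\C$-module functor consists, for each $\sigma \in G$, of natural isomorphisms $\phi_{V, M} \colon F(V \otimes M) \to V \otimes F(M)$ with $V \in \C_\sigma$, subject to the pentagon \eqref{penta funtor modulo}. Since $F$ is already $\C_e$-linear, the universal property of $\boxtimes_{\C_e}$ reduces such a family to a single natural isomorphism $u_\sigma \colon F \to \sigma_*(F)$ in $(\C_e)^*_\M$; specializing \eqref{penta funtor modulo} to $\C_\sigma \times \C_\tau \times \M$ and using Corollary \ref{mapa picard} translates the pentagon into the cocycle relation $u_{\sigma\tau} = \sigma_*(u_\tau) \circ u_\sigma$ modulo $\gamma_{\sigma, \tau}$. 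An identical analysis at the level of natural transformations produces the desired equivalence of fusion categories $\C^*_\M \cong \bigl( (\C_e)^*_\M \bigr)^{G^{op}}$.

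For the converse, I would run the same dictionary backwards: a $G^{op}$-equivariant structure on $F \in (\C_e)^*_\M$, with respect to the action $\sigma_*$ built above, recovers exactly the $\phi$-data needed to upgrade $F$ to a $\C$-module endofunctor. To deduce the graded-component statement, I would represent $\M$ via an algebra $A \in \C_e$, so that $\C^*_\M$ is identified with the opposite of ${}_A \C_A$; the $G$-grading of $\C$ then descends to a $G$-grading of ${}_A \C_A$ with neutral component ${}_A (\C_e) {}_A$, whose opposite is $(\C_e)^*_\M$, giving the claimed trivial component. The main obstacle I foresee is coherence bookkeeping: one must check carefully that the associator $\gamma_{\sigma, \tau}$ supplied by Corollary \ref{mapa picard} matches the reassociation appearing in the pentagon \eqref{penta funtor modulo}, so that $G^{op}$-equivariant objects are genuinely, and not merely formally, the same as $\C$-module endofunctors.
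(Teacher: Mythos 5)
Your forward direction is correct in outline but takes a genuinely different route from the paper. The paper first reduces to the case $\M=(\C_e,\otimes)$ by the Morita argument from the proof of Theorem \ref{teor extensiones} (replacing $\C$ by ${}_A\C_A$ for $A\in\C_e$ with $\M\cong(\C_e)_A$), then invokes Theorem \ref{teor extensiones} to write $\C\cong\C_e\rtimes G$, and finally quotes the duality $(\C_e\rtimes G)^*_{\C_e}\cong(\C_e^G)^{\mathrm{rev}}$ from \cite[Proposition 3.2]{Non-grouptheo}. You instead build the $G^{op}$-action on $(\C_e)^*_\M$ directly by conjugation with the equivalences $a_\sigma\colon\C_\sigma\boxtimes_{\C_e}\M\to\M$ supplied by the extension (these are indeed equivalences, by Proposition \ref{equivalencia sub} applied to $\M$ as the unique indecomposable $\C_e$-submodule category of $(\M,\odot)$), and you match the extra data of a $\C$-module endofunctor with a $G^{op}$-equivariant structure via the universal property of $\boxtimes_{\C_e}$. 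This is a legitimate, more self-contained proof of the first assertion; what it costs is exactly the coherence bookkeeping you flag (one must check that $\sigma_*$ is monoidal, that the restriction of $\phi$ to $\C_\sigma\times\M$ commutes with the balancing so that it descends to $u_\sigma\colon F\to\sigma_*(F)$, and that $u_\sigma$ is a morphism in $(\C_e)^*_\M$), together with the observation that $\C$-module natural transformations correspond to morphisms of equivariant objects, so that one gets an equivalence of fusion categories and not merely a bijection on objects.

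The converse, however, has a genuine gap: you have misread its hypotheses. In the second assertion $\C$ is not a graded category and $\M$ is not an extension; rather $\C=(\C_e)^{G^{op}}$ is an equivariantization of a fusion category denoted $\C_e$ by abuse of notation, $\M$ is an indecomposable $\C_e$-module category regarded as a $\C$-module category through the forgetful functor, and the content of the claim is that $\C^*_\M$ \emph{acquires} a $G$-grading with trivial component $(\C_e)^*_\M$. Your ``dictionary backwards'' only shows that every equivariant object of $(\C_e)^*_\M$ lifts to a $\C$-module endofunctor --- that is part of the essential surjectivity needed in the forward direction, not a proof of the converse --- and your concluding step (``the $G$-grading of $\C$ then descends to a $G$-grading of ${}_A\C_A$'') presupposes a grading on $\C$ that is not among the hypotheses. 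What is needed is the opposite half of the duality: $\C_e$ is a $\C_e^G$-module category via the forgetful functor and $(\C_e^{G})^*_{\C_e}\cong(\C_e\rtimes G)^{\mathrm{rev}}$, which is visibly graded with trivial component $\C_e^{\mathrm{rev}}\cong(\C_e)^*_{\C_e}$; the paper cites \cite[Proposition 3.2]{Non-grouptheo} for this and then passes to general $\M$ by the same Morita reduction. You must either supply this dual statement (for instance by decomposing $\F_\C(\M,\M)$ according to how functors intertwine the free module functor of the de-equivariantization) or cite it.
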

\begin{proof}
Using the same argument as in proof of Theorem \ref{teor extensiones}, it is enough to see the case in which $\M$ is the $\C_e$-module category $(\C_e,\otimes)$.

By Theorem \ref{teor extensiones}, there exists an action
$*:\underline{G}\to \underline{\Aut}_\otimes (\C_e)$, such that
$\C=\C_e\rtimes G$. The category $\C$ is a $\C_e\rtimes G$-module
category with action $[V,\sigma]\otimes W= V\otimes \sigma_*(W)$,
see \cite[Proposition 3.2]{Non-grouptheo} or \cite[Example
2.4.]{tambara}. Moreover, the tensor category  $(\C_e\rtimes
G)_{\C_e}^*$, is monoidally equivalent to $(\C_e^G)^{\text{rev}}$
the $G$-equivariantization  of $\C$  with reversed tensor product,
see \cite[Proposition 3.2]{Non-grouptheo}. Conversely, $\C_e$ is a
$\C_e^G$-module category through the forgetful functor $\C_e^G\to
\C_e, (V,f)\mapsto V$, thus $(\C_e^G)^*_{\C_e}\cong (\C_e\rtimes
G)^{\text{rev}}$, see \cite[Proposition 3.2]{Non-grouptheo}.
\end{proof}

\begin{obs}\label{obs semi-final}

Let $H$ be a semisimple Hopf algebra such that the category of
$H$-comodules, Corep$(H)$, is a $G$-graded fusion category. Hence
the forgetful functor Corep$(H)\to \text{Vec}$ defines a structure
of Corep$(H)$-module category over Vec, and it is an extension as
Corep$(H)_e$-module category. By Corollary \ref{corol equiv}
Rep$(H)=\text{Corep}(H)^*_{\text{Vec}}$ is a
$G^{op}$-equivariantization. \smallbreak Conversely, if $\C$ is a
$G$-equivariantization of the category of representations of a
semisimple Hopf algebra $Q$, then $U \circ \text{Forg}$ is a fiber
functor of $\C$, where Forg$:\C\to \text{Rep}(Q), (V,f)\mapsto V$ is
the forgetful functor, and $U:\text{Rep}(Q)\to$ Vec is the forgetful
functor of Rep$(Q)$. Thus, by Tannaka-Krein reconstruction $\C$ is
the category of representations of a semisimple Hopf algebra $H$,
and by Corollary \ref{corol equiv} Rep$(H^*)=\C_{\text{Vec}}^*$ is a
$G^{op}$-graded fusion category.
\end{obs}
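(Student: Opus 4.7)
The plan is to prove both directions of this Hopf-algebraic consequence by applying Corollary \ref{corol equiv} with Vec playing the role of the module category in each direction, together with Tannaka-Krein reconstruction. The key observation in both cases is that a semisimple Hopf algebra provides a canonical fiber functor, which translates into a module category structure on Vec whose dual recovers the opposite representation category.

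For the forward direction, let $\C = \text{Corep}(H)$ be $G$-graded. First I would observe that the forgetful functor $\omega:\text{Corep}(H)\to \text{Vec}$ is a $k$-linear tensor functor, so Vec inherits a structure of $\C$-module category via $V\otimes W := \omega(V)\otimes_k W$. Restricting this action along $\C_e\hookrightarrow \C$ yields precisely the $\C_e$-module structure on Vec coming from $\omega|_{\C_e}$, so Vec, viewed as a $\C$-module category, is an extension (in the sense of the paper) of the indecomposable $\C_e$-module category Vec. Corollary \ref{corol equiv} then gives immediately that $\C^{*}_{\text{Vec}}$ is a $G^{op}$-equivariantization of $(\C_e)^{*}_{\text{Vec}}$. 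Under the standard Tannakian identification $\text{Corep}(H)^{*}_{\text{Vec}}\cong \text{Rep}(H)$, this is exactly the claim.

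For the converse, suppose $\C$ is a $G$-equivariantization of $\text{Rep}(Q)$ with $Q$ semisimple. I would first produce a fiber functor on $\C$ by composing the forgetful functor $\text{Forg}:\C\to \text{Rep}(Q)$ (which is tensor, exact and faithful, since an equivariant object is in particular an underlying $Q$-module) with the standard fiber functor $U:\text{Rep}(Q)\to \text{Vec}$. The composite $U\circ\text{Forg}$ is tensor, exact and faithful, hence a fiber functor on $\C$, so Tannaka-Krein reconstruction gives a semisimple Hopf algebra $H$ with $\C\cong \text{Rep}(H)$. Now Vec is an indecomposable $\C_e$-module category via $U$, and $\C$ is a $G$-equivariantization of $\C_e=\text{Rep}(Q)$; the converse part of Corollary \ref{corol equiv} (applied with the roles of $G$ and $G^{op}$ interchanged) then yields that $\C^{*}_{\text{Vec}}$ is a $G^{op}$-graded fusion category. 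Finally, the standard identification $\text{Rep}(H)^{*}_{\text{Vec}}\cong \text{Corep}(H)=\text{Rep}(H^{*})$ completes the conclusion.

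The main technical point to check carefully is the identification of the dual category of a representation category with respect to the canonical fiber-functor module category, namely $\text{Rep}(H)^{*}_{\text{Vec}}\cong \text{Corep}(H)=\text{Rep}(H^{*})$ (up to the tensor-reversal convention used throughout this paper, which is why equivariantizations and gradings by $G$ versus $G^{op}$ appear symmetrically). Once this identification is in hand, the entire argument is a direct application of Corollary \ref{corol equiv} together with Tannaka-Krein reconstruction; there is no combinatorial or cohomological obstacle, only careful bookkeeping of module structures and grading conventions.
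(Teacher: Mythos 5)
Your proposal is correct and follows essentially the same route as the paper: the remark's own justification is precisely the observation that the forgetful functor makes Vec an extension of the $\C_e$-module category Vec, followed by an application of Corollary \ref{corol equiv} and Tannaka--Krein reconstruction, with the identification $\text{Rep}(H)^*_{\text{Vec}}\cong\text{Rep}(H^*)$. Your added care about the tensor-reversal and $G$ versus $G^{op}$ bookkeeping is the only point the paper leaves implicit, and you handle it correctly.
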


\begin{obs}\label{obs finales}
Theorem \ref{teor extensiones} and  Corollary \ref{corol main
result}, reduce the problem of constructing module categories over a
graded fusion category $\C=\bigoplus_{\sigma\in G}$, to the
following steps:
\begin{enumerate}
  \item Classifying the indecomposable $\C_e$-module categories.
  \item Finding the subgroup $S$ and the indecomposable $\C_e$-module categories $\N$, such that $\N$ is $S$-invariant.
  \item Determining if $\F_{\C_S}(\ind_{\C_e}^{\C_S}(\N),\ind_{\C_e}^{\C_S}(\N))$ is equivalent to a semi-direct $S^{op}$-product fusion category.
  \item Finding all graded functors from Vec$_{S^{op}}$ to $\F_{\C_S}(\N,\N)$, up to conjugation.
\end{enumerate}
\end{obs}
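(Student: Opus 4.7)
The plan is to show that carrying out the four listed steps produces, up to equivalence, every indecomposable $\C$-module category, and conversely that each completed set of choices yields a valid $\C$-module category. The argument is essentially a bookkeeping combination of Corollary \ref{corol main result}, Theorem \ref{teor extensiones} and Proposition \ref{prop equivalent inducidas}; no new technical input is required beyond what has already been proved.

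First I would start with an arbitrary indecomposable $\C$-module category $\M$ and invoke Corollary \ref{corol main result}, which produces a subgroup $S\subset G$ together with a $\C_S$-module category $(\N',\odot)$ whose restriction to $\C_e$ remains indecomposable and such that $\M\cong \C\boxtimes_{\C_S}\N'$. The underlying $\C_e$-module category $\N$ (the restriction of $\N'$) falls under step (1). Moreover, the subgroup $S$ appearing in the proof of that corollary is $\{\sigma\in G\mid \C_\sigma\overline{\otimes}\N=\N\}$, and Proposition \ref{equivalencia sub} gives $\C_\sigma\boxtimes_{\C_e}\N\cong \C_\sigma\overline{\otimes}\N\cong\N$ for every $\sigma\in S$; hence $\N$ is automatically $S$-invariant, which is step (2).

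Next, to identify $\N'$ among all possible $\C_S$-extensions of $\N$, I would apply Theorem \ref{teor extensiones} with $G$ replaced by $S$ and $\C$ replaced by $\C_S$. The theorem tells us that such an extension of $\N$ exists if and only if $(\C_S)^*_{\overline{\N}}=\F_{\C_S}(\ind_{\C_e}^{\C_S}(\N),\ind_{\C_e}^{\C_S}(\N))$ is an $S^{op}$-semi-direct product fusion category, which is step (3); and the equivalence classes of such extensions are in bijection with conjugacy classes of graded tensor functors $\text{Vec}_{S^{op}}\to (\C_S)^*_{\overline{\N}}$, which is step (4). Together with step (1), this recovers the data $(S,\N,\N')$ from $\M$, and conversely shows that any admissible tuple produced by the four steps yields an indecomposable $\C$-module category $\C\boxtimes_{\C_S}\N'$.

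Finally, to complete the parameterization I would appeal to Proposition \ref{prop equivalent inducidas} to handle redundancy: two data $(S,\N')$ and $(S',\N'')$ produce equivalent induced $\C$-module categories if and only if $S$ and $S'$ are conjugate in $G$ and the corresponding transported $\C_S$-modules agree. There is no genuinely hard step here, since each ingredient has already been established; the only point to be careful about is observing that $S$-invariance of $\N$ is forced by, rather than merely sufficient for, the existence of a $\C_S$-extension of $\N$, and this is built directly into the definition of $S$ in Corollary \ref{corol main result}.
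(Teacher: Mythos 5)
Your proposal is correct and follows exactly the route the paper intends: the statement is a remark whose justification is precisely the assembly of Corollary \ref{corol main result} (yielding $S$, the $\C_e$-module category $\N$, and its forced $S$-invariance via the definition $S=\{\sigma\mid \C_\sigma\overline{\otimes}\N=\N\}$ together with Proposition \ref{equivalencia sub}) with Theorem \ref{teor extensiones} applied to the graded category $\C_S$, and the paper offers no further argument beyond this. Your additional appeal to Proposition \ref{prop equivalent inducidas} to control redundancy among the resulting induced categories is a sensible supplement consistent with the paper's discussion.
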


We shall briefly describe a way to solve the steps (3) and (4).

{\bf 1.}\ If $\C$ is a $G$-graded fusion category we have the following sequence of groups \begin{equation}\label{sucecion}
\begin{diagram}
  \node{U(\C_e)}\arrow{e,t}{\iota}\node{U(\C)}\arrow{e,t}{\text{\textbf{deg}}}\node{G}
\end{diagram}
\end{equation}
where $\iota$ is the inclusion and \textbf{deg}$(X)=\sigma$ if $X\in \C_\sigma$.

Note that a $G$-graded fusion category is a crossed product fusion
category if and only if the sequence \eqref{sucecion} is exact. By
Lemma \ref{lema semidir sii funtor graduado}, $\C$ is monoidally
equivalent to a semi-direct product fusion category if and only if
the sequence \eqref{sucecion} is exact and there exists a
\emph{splitting morphism} of \textbf{deg}, \textit{i.e.}, there
exists a group morphism $\pi: G\to U(\C)$ with
\textbf{deg}$\circ\pi=\id_G$  (thus \eqref{sucecion} is a split
extension),  such that the obstruction $\omega(\D)$ of the fusion
subcategory  $\D\subset \C$ generated by $\{\pi(\sigma)\}_{\sigma\in
G}$ is zero (see Subsection \ref{pointed y obstruccion} for the
definition of the obstruction of a pointed fusion category).

{\bf 2.} Let $G$ and $N$ be groups and $*:G\to \Aut(N)$ a group
morphism. A function  $\theta: G\to N, \sigma,\mapsto \theta_\sigma$
is called a 1-cocycle from $G$ to $N$ if
\[\theta_\sigma\theta_\tau=\theta_\sigma\sigma_*( \theta_{\tau})\]
for all $\sigma,\tau \in G$. The set of all 1-cocycles is denoted by
$Z^1(G,N)$. The group $N$ acts  over $Z^1(G,N)$ by \[(u\cdot
\theta)_\sigma=u\theta_\sigma \sigma_*(u^{-1}),\] for all $u\in N,
\sigma \in G, \theta \in Z^1(G,N)$. The set of orbits $Z^{1}(G,N)/G$
is denoted by $H^{1}(G,N)$, and two 1-cocycles  in the same orbit
are called cohomologous. \smallbreak Suppose that $\C=\C_e\rtimes G$
is a semi-direct product fusion category. Hence
$U(\C)=U(\C_e)\rtimes G$, and the sequence \eqref{sucecion} is a
semidirect product extension. Recall that there is a bijective
correspondence between 1-cocycles from $G$ to $U(\C_e)$ and
splitting homomorphisms of \textbf{deg}. In fact, given  $\theta\in
Z^1(G,U(\C_e))$ the map  $\pi_{\theta}: G\to U(\C_e)\rtimes G,
\sigma \mapsto [\theta_\sigma,\sigma]$  is a splitting morphism of
\textbf{deg}. Conversely, if $\pi_{\theta}: G\to U(\C_e)\rtimes G,
\sigma \mapsto [\theta_\sigma,\sigma]$ is a splitting morphism, the
map $\theta: G\to U(\C_e), \sigma \mapsto \theta_\sigma$ is a
1-cocycle. Moreover, the splitting morphisms are conjugate by an
element in $U(\C_e)$ if and only if the 1-cocycles associated are
cohomologous.

\begin{defin}
Let $\omega\in H^3(U(\C),k^*)$ be the obstruction of the maximal pointed fusion subcategory of $\C$, and let $\theta: G\to U(\C_e)$ be a 1-cocycle. The obstruction of  $\theta$ is defined as the cohomology class of $\omega|_{X_\theta}\in H^3(X_\theta,k^*)$, where $X_\theta =\{[\theta_\sigma,\sigma]\}_{\sigma\in G}\subset U(\C_e)\rtimes G$.
\end{defin}

Without loss of generality we may assume that $\C$ is skeletal. Thus
there is a unique 3-cocycle $\omega \in Z^3(U(\C),k^*)$, such that
$\alpha_{\sigma,\tau, \rho}=\omega(\sigma,\tau
\rho)\id_{\sigma\otimes \tau \otimes \rho}$, and the cohomology
class of $\omega$ is the obstruction of the maximal pointed fusion
subcategory of $\C$.

Let $\theta \in Z^1(G,U(\C))$ such that $[\omega|_{X_\theta}]=0$, and  $L_\omega^\theta =\{\gamma \in C^2(G,k^*)| \delta(\gamma) =\omega|_{X_\theta}\}$. Then  a graded tensor functor  $(F,\gamma):$Vec$_G\to \C$ with $k_\sigma\mapsto [\theta_\sigma,\sigma]$ defines an element $\gamma' \in L_\omega^\theta $ by \begin{equation}\label{formula dos}\gamma_{k_\sigma, k_\tau}=\gamma'(\sigma,\tau)\id_{F(k_\sigma)\otimes F(k_\tau)},\end{equation}for all $\sigma,\tau \in G$. Conversely, using the formula \eqref{formula dos} every element in $L_\omega^\theta$ defines a graded functor with $F(k_\sigma)=[\theta_\sigma,\sigma]$ for all $\sigma \in G$.

The next proposition is a consequence of the previous discussion and Proposition \ref{lema extension facil}.

\begin{prop}\label{propo decripcion extension en group-theo data}
Let $*:\underline{G}\to \underline{\Aut}_\otimes(\C_e)$ be a group
action over $\C_e$. Then there is a bijective correspondence between
$\C_e\rtimes G$-extension of  $(\C_e,\otimes)$ and pairs $(\theta,
\gamma)$, where $\theta \in Z^1(G,U(\C_e))$ is a 1-cocycle with
obstruction zero and $\gamma \in L_\omega^\theta =\{\gamma \in
C^2(G,k^*)| \delta(\gamma) =\omega|_{X_\theta}\}$. Two pairs
$(\theta,\gamma^{\theta}), (\nu,\gamma^{\nu})$ define equivalent
$\C$-extensions if and only if there is $u\in U(\C_e)$ and
$\kappa\in C^1(G,k^*)$, such that $\theta=u\cdot \nu $, and
\[
\gamma^\nu(\sigma,\tau)=\gamma^{\theta}(\sigma,\tau)\delta(\kappa)(\sigma,\tau) \omega([\theta_\sigma,\sigma],[\theta_\tau,\tau],[u,e]) \omega([\theta_\sigma,\sigma],[u,e],[\nu_\tau,\tau])
\]for all $\sigma,\tau \in G$.
\end{prop}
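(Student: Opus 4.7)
The plan is to deduce this from Proposition \ref{lema extension facil}, which already reduces classifying $\C$-extensions of $(\C_e,\otimes)$ to classifying graded tensor functors $\text{Vec}_G\to \C$ up to conjugation. So the real work is to translate such functors into the group-theoretical data $(\theta,\gamma)$, and then to translate the conjugation relation into the stated equivalence on pairs.

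First I would pin down the underlying functor. A graded tensor functor $(F,\gamma):\text{Vec}_G\to \C_e\rtimes G$ must send the simple invertible $k_\sigma\in(\text{Vec}_G)_\sigma$ to an invertible object of $(\C_e\rtimes G)_\sigma$; every such object is of the form $[\theta_\sigma,\sigma]$ for a unique (up to isomorphism) $\theta_\sigma\in U(\C_e)$. The assignment $\sigma\mapsto [\theta_\sigma,\sigma]$ is then a splitting homomorphism of $\textbf{deg}$, and by the bijection recalled right before the proposition this is exactly the data of a 1-cocycle $\theta\in Z^1(G,U(\C_e))$. Thus the underlying functor of $(F,\gamma)$ is encoded by $\theta$.

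Next I would handle the tensor structure. Since $F(k_\sigma)\otimes F(k_\tau)$ and $F(k_{\sigma\tau})$ are isomorphic simple objects, $\gamma_{k_\sigma,k_\tau}=\gamma'(\sigma,\tau)\operatorname{id}$ for a unique $\gamma'\in C^2(G,k^*)$, as in \eqref{formula dos}. The monoidal coherence axiom for $(F,\gamma)$, once both associators are spelled out, compares the trivial associator of $\text{Vec}_G$ with the associator of $\C$ restricted to the pointed subcategory generated by $X_\theta=\{[\theta_\sigma,\sigma]\}$; by the choice of skeleton this restriction is the cocycle $\omega|_{X_\theta}$. The axiom therefore becomes precisely $\delta(\gamma')=\omega|_{X_\theta}$, so $\gamma'\in L_\omega^\theta$, and such a $\gamma'$ exists if and only if $[\omega|_{X_\theta}]=0$, i.e.\ the obstruction of $\theta$ vanishes. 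Combined with the previous step this gives the bijection between pairs $(\theta,\gamma)$ and graded tensor functors $\text{Vec}_G\to \C$.

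Finally I would analyze conjugation. Given two graded functors associated to $(\theta,\gamma^\theta)$ and $(\nu,\gamma^\nu)$, a conjugation is the data of a multiplicatively invertible $U=[u,e]\in \C_e$ and isomorphisms $\Theta_\sigma:[\theta_\sigma,\sigma]\otimes[u,e]\to[u,e]\otimes[\nu_\sigma,\sigma]$ fitting into the diagram \eqref{pentagono conjugados}. Comparing the two sides as objects of $\C_\sigma$ forces $\theta_\sigma=u\nu_\sigma\sigma_*(u^{-1})$ up to scalar, i.e.\ $\theta=u\cdot\nu$ in the $U(\C_e)$-action on $Z^1(G,U(\C_e))$; the scalar ambiguity in choosing each $\Theta_\sigma$ is a 1-cochain $\kappa\in C^1(G,k^*)$. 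Writing out the commutativity of \eqref{pentagono conjugados} in the skeletal model, each of the three maps in the hexagon passes through an associator of $\C$ and so contributes an evaluation of $\omega$; the two that do not cancel are $\omega([\theta_\sigma,\sigma],[\theta_\tau,\tau],[u,e])$ and $\omega([\theta_\sigma,\sigma],[u,e],[\nu_\tau,\tau])$, while the $\Theta$-legs contribute $\delta(\kappa)(\sigma,\tau)$ and the $\gamma$-legs contribute $\gamma^\theta$ and $\gamma^\nu$. Rearranging yields the stated identity. The main obstacle is precisely this last bookkeeping: tracking the three associators in \eqref{pentagono conjugados} and correctly distributing the contributions of $\kappa$, $\delta(\kappa)$ and the two values of $\omega$; everything else is a direct unpacking of the definitions already set up in the subsection.
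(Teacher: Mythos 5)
Your proposal is correct and follows essentially the same route as the paper: the paper states the proposition as a direct consequence of Proposition \ref{lema extension facil} together with the preceding discussion (the bijection between splitting homomorphisms and $1$-cocycles, the identification of tensor structures with elements of $L_\omega^\theta$ via \eqref{formula dos}, and the unpacking of the conjugation diagram \eqref{pentagono conjugados} in the skeletal model), which is precisely what you reconstruct. You have in fact supplied more detail than the paper, which leaves the bookkeeping of the associator contributions to the reader.
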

\qed

\begin{obs}
The group $Z^2(G,k^*)$ acts on $L_\omega^\theta$ by multiplication, and this action is free and transitive. Thus $L_\omega^\theta$ is a torsor over $Z^2(G,k^*)$, and there is  a (non natural) bijective correspondence between $L_\omega^\theta$ and $Z^2(G,k^*)$. Also, note that $\gamma, \gamma' \in L_\omega^\theta$ define tensor equivalent graded functors if and only if $\gamma\gamma^{-1}\in Z^2(G,k^*)$. Then set of tensor equivalence classes of graded functor with splitting homomorphism defined by $\theta\in Z^1(G,U(\C_e))$ is a torsor over $H^2(G,k^*)$.
\end{obs}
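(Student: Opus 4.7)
The strategy is to combine Proposition~\ref{lema extension facil} with the explicit description of graded tensor functors $\text{Vec}_G \to \C_e \rtimes G$ developed in the paragraphs preceding the statement. By Proposition~\ref{lema extension facil}, equivalence classes of $\C_e \rtimes G$-extensions of $(\C_e,\otimes)$ correspond to conjugacy classes of $G$-graded tensor functors $(F,\gamma): \text{Vec}_G \to \C_e \rtimes G$, so the task is to translate this classification into the stated combinatorial data.

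First I would determine the underlying functor $F$. Since $F$ is graded and $F(k_\sigma \otimes k_{\sigma^{-1}}) \cong F(k_e) = \unit$, each $F(k_\sigma)$ is a multiplicatively invertible object of $(\C_e \rtimes G)_\sigma$, hence of the form $[\theta_\sigma, \sigma]$ with $\theta_\sigma \in U(\C_e)$. The existence of an isomorphism $F(k_\sigma \otimes k_\tau) \cong F(k_\sigma) \otimes F(k_\tau)$ forces $[\theta_{\sigma\tau}, \sigma\tau] \cong [\theta_\sigma\, \sigma_*(\theta_\tau), \sigma\tau]$, that is, $\theta_{\sigma\tau} = \theta_\sigma \sigma_*(\theta_\tau)$, which is precisely the 1-cocycle condition. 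Thus the isomorphism class of $F$ is equivalent to the datum $\theta \in Z^1(G, U(\C_e))$.

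Next I would extract the monoidal structure $\gamma$. Once $F$ is fixed, each $\gamma_{k_\sigma,k_\tau}: F(k_{\sigma\tau}) \to F(k_\sigma) \otimes F(k_\tau)$ is a scalar $\gamma'(\sigma,\tau) \in k^*$, as recorded in equation~\eqref{formula dos}, because source and target are isomorphic simple objects in the skeletal model. Imposing the tensor functor axiom on $(F,\gamma)$ against the associator $\omega$ of the maximal pointed subcategory (restricted to the subgroup $X_\theta = \{[\theta_\sigma,\sigma]\}_{\sigma \in G} \subset U(\C)$) yields the cocycle equation $\delta(\gamma') = \omega|_{X_\theta}$. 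A solution exists if and only if $[\omega|_{X_\theta}] = 0$ in $H^3(G,k^*)$, which is the obstruction condition; in that case the solution set is exactly the torsor $L_\omega^\theta$ appearing in the statement. This gives the first bijection.

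Finally, for the equivalence relation I would unpack diagram~\eqref{pentagono conjugados} in the skeletal model. A conjugation datum consists of an invertible $U \in \C_e$ and isomorphisms $\Theta_\sigma: F(k_\sigma) \otimes U \to U \otimes F'(k_\sigma)$, i.e.\ $[\theta_\sigma \sigma_*(U), \sigma] \cong [U \nu_\sigma, \sigma]$, so such $\Theta_\sigma$ exists precisely when $\theta_\sigma = U\, \nu_\sigma\, \sigma_*(U^{-1}) = (u \cdot \nu)_\sigma$, with $u = U$. Once the objects match, each $\Theta_\sigma$ becomes a scalar $\kappa(\sigma) \in k^*$, and the commutativity of \eqref{pentagono conjugados}, computed using the associator $\omega$ on the maximal pointed subcategory of $\C_e \rtimes G$, produces exactly the relation
\[
\gamma^\nu(\sigma,\tau) = \gamma^\theta(\sigma,\tau)\,\delta(\kappa)(\sigma,\tau)\,\omega([\theta_\sigma,\sigma],[\theta_\tau,\tau],[u,e])\,\omega([\theta_\sigma,\sigma],[u,e],[\nu_\tau,\tau]).
\]
The main technical obstacle is this last step: chasing the two paths around diagram~\eqref{pentagono conjugados} in the skeletal category and accounting for the associators introduced when reordering $[\theta_\sigma,\sigma]$, $[\theta_\tau,\tau]$ and $[u,e]$, in order to produce precisely the two $\omega$ factors above. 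Everything else is a direct assembly of the results already in place.
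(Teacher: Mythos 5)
Your proposal is, in outline, a proof of Proposition \ref{propo decripcion extension en group-theo data}, but it is not a proof of the Remark at hand, which makes three distinct claims: (a) that $Z^2(G,k^*)$ acts freely and transitively on $L_\omega^\theta$; (b) that two elements $\gamma,\gamma'\in L_\omega^\theta$ give \emph{tensor equivalent} graded functors precisely when they differ by a coboundary (the printed condition ``$\gamma\gamma^{-1}\in Z^2(G,k^*)$'' is vacuous as written and must be read as $\gamma(\gamma')^{-1}\in B^2(G,k^*)$ for the final sentence to follow); and (c) that consequently the set of tensor equivalence classes with fixed splitting $\theta$ is an $H^2(G,k^*)$-torsor. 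Claim (a) is the standard observation that the solution set of $\delta(\gamma)=\omega|_{X_\theta}$, when nonempty, is a coset of $\ker\delta=Z^2(G,k^*)$ inside $C^2(G,k^*)$; you assert the word ``torsor'' but never verify it, though it is immediate. The genuine gap is (b): a tensor equivalence of graded functors is a monoidal natural isomorphism $(F,\gamma)\Rightarrow(F,\gamma')$, i.e.\ a family of automorphisms $\kappa(\sigma)\in k^*$ of the simple objects $[\theta_\sigma,\sigma]$ satisfying $\gamma'(\sigma,\tau)=\gamma(\sigma,\tau)\,\kappa(\sigma)\kappa(\tau)\kappa(\sigma\tau)^{-1}$, which is exactly $\gamma'/\gamma=\delta(\kappa)\in B^2(G,k^*)$; this is the computation the Remark rests on, and it does not appear anywhere in your write-up.

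What you analyze instead is the \emph{conjugacy} relation of diagram \eqref{pentagono conjugados}, which permits a nontrivial invertible object $U\in\C_e$ and therefore produces the extra factors $\omega([\theta_\sigma,\sigma],[\theta_\tau,\tau],[u,e])\,\omega([\theta_\sigma,\sigma],[u,e],[\nu_\tau,\tau])$. That is the equivalence relation relevant to classifying module-category extensions in Proposition \ref{propo decripcion extension en group-theo data}, not the one in the Remark: quotienting $L_\omega^\theta$ by conjugacy would in general not yield an $H^2(G,k^*)$-torsor, since conjugation can move $\theta$ within its cohomology class and twists $\gamma$ by terms that need not be coboundaries. The Remark is the degenerate case $U=\unit$, $\Theta_\sigma=\kappa(\sigma)\id$, in which the $\omega$ factors disappear (the cocycle being normalized) and the relation collapses to $\gamma'=\gamma\,\delta(\kappa)$. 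In short, your first two steps correctly reconstruct the bijection between graded functors with splitting $\theta$ and $L_\omega^\theta$, which the Remark presupposes from the preceding discussion, but the statement you then establish is the neighboring Proposition rather than the torsor claims actually asserted here.
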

\bibliographystyle{amsalpha}

\end{document}